\newtheorem{rem}{Remark}
\newtheorem{lem}{Lemma}[section]
\newtheorem{pro}{Proposition}
\newtheorem{theo}{Theorem}
\newtheorem{ass}{Assumption}
\renewcommand{\P}{\mathbb{P}}
\newcommand{\R}{\mathbb{R}}
\newcommand{\E}{\mathbb{E}}
\newcommand{\N}{\mathbb{N}}
\newcommand{\Z}{\mathbb{Z}}
\newcommand{\eps}{\varepsilon}
\DeclareMathOperator{\var}{Var}
\DeclareMathOperator{\cov}{Cov}
\numberwithin{equation}{section}
\begin{document}

\title[Recombination and adaptation]{An Eco-Evolutionary approach of Adaptation and Recombination in a large population of varying size}

\author{Charline Smadi}
\address{Universit\'e Paris-Est, CERMICS (ENPC), F-77455 Marne La Vall\'ee, France and CMAP UMR 7641, \'Ecole Polytechnique CNRS, Route de Saclay,
91128 Palaiseau Cedex, France}
\email{charline.smadi@polytechnique.edu}

\begin{abstract}
We identify the genetic signature of a selective sweep in a population described by a birth-and-death process with density
dependent competition. We study the limit behaviour for large $K$, where $K$ scales the population size.
We focus on two loci: one under selection and one neutral. We distinguish a soft sweep occurring after an environmental change, from a hard sweep occurring 
after a mutation, and express the neutral proportion variation as a function of the
ecological parameters, recombination probability $r_K$, and $K$. We show that for a hard sweep, two recombination regimes appear according to the order 
of $r_K\log K$.
\end{abstract}

\maketitle

\section{Introduction}

There are at least two different ways of adaptation for a population: selection can either act on a new mutation 
(hard selective sweep), or on preexisting 
alleles that become advantageous after an environmental change (soft selective sweep from standing variation). 
New mutations are sources 
of diversity, and hard selective sweeps were until recently the only considered 
way of adaptation. Soft selective sweeps from standing variation allow a faster adaptation to novel environments, 
and their importance 
is growing in empirical and theoretical studies 
(Orr and Betancourt \cite{orr2001haldane}, Hermisson and Pennings \cite{hermisson2005soft}, Prezeworski, Coop and Wall 
\cite{prezeworski2005signature}, Barrett and Schluter \cite{barrett2008adaptation}, Durand and al  \cite{durand2010standing}).
In particular Messer and Petrov \cite{messer2013population} review a lot of evidence, from individual case
studies as well as from genome-wide scans, that soft sweeps (from standing variation and from recurrent mutations)
 are common in a broad range of organisms.
These distinct selective sweeps entail different 
genetic signatures in the vicinity of the novely 
fixed allele, and the multiplication of genetic data available allows one to detect these signatures in current populations as described by Peter, Huerta-Sanchez 
and Nielsen \cite{peter2012distinguishing}. To do this in an effective way, it is necessary to identify accurately the signatures left 
by these two modes of adaptation.
We will not consider in this work the soft selective sweeps from recurrent mutations. For a study of these sweeps we refer to 
\cite{pennings2006soft,pennings22006soft,hermisson2008pattern}.
\\

In this work, we consider a sexual haploid population of varying size, modeled by a birth and death process with density 
dependent competition. 
Each individual's ability to survive and reproduce depends on its own genotype and on the population state. 
More precisely, each individual is 
characterized by some ecological parameters: 
birth rate, intrinsic death rate and competition kernel describing the competition with other individuals depending on their genotype. 
The differential 
reproductive success of individuals generated by their interactions entails progressive variations in the number of individuals carrying a given genotype. 
This process, 
called natural selection, is a key mechanism of evolution.
Such an eco-evolutionary approach has been introduced by Metz and coauthors in \cite{metz1996adaptive} and made 
rigorous in the seminal paper of Fournier and M\'el\'eard \cite{fournier2004microscopic}.
Then it has been developed by 
Champagnat, M\'el\'eard and coauthors (see
\cite{champagnat2006microscopic,
champagnat2011polymorphic,champagnat2014adaptation} and references therein) for 
the haploid asexual case and by Collet, M\'el\'eard and Metz \cite{collet2011rigorous} 
and Coron and coauthors \cite{coron2013slow,coron2014stochastic} for the diploid sexual case.
The recent work of Billiard and coauthors \cite{billiard2013stochastic} studies the dynamics of a two-locus model in an haploid asexual population.
Following these works, we introduce a parameter $K$ called carrying capacity which scales the population size, and study the limit 
behavior for large $K$. But unlike them, we focus on two loci in a sexual haploid population and take into account recombinations: one locus is under selection and has 
two possible alleles $A$ and $a$ 
and the second one is neutral with allele $b_1$ or $b_2$. When two individuals give birth, either a recombination occurs with probability $r_K$ and 
the newborn inherits one allele from each parent, or he is the clone of one parent. \\
 
We first focus on a soft selective sweep from standing variation occurring after a change in the environment (new pathogen, environmental catastrophe, occupation of a new ecological 
niche,...). We assume that before the change the alleles $A$ and $a$ were neutral and both represented a positive fraction of the 
population, and that in the new environment the allele $a$ becomes favorable and goes to fixation.
We can divide the selective sweep in two periods: a first one where the population process is well approximated by the solution of a deterministic dynamical system, and a second one where $A$-individuals are 
near extinction, the deterministic 
approximation fails and the fluctuations of the $A$-population size become predominant.
We give the asymptotic value of the final neutral allele proportion as a 
function of the ecological parameters, recombination probability $r_K$ and solutions of a two-dimensional competitive Lotka-Volterra 
system. \\

We then focus on hard selective sweeps.
We assume that a mutant $a$ appears in a monomorphic $A$-population at ecological equilibrium. 
As stated by Champagnat in \cite{champagnat2006microscopic}, the selective sweep is divided in three periods: during the first one, the resident population size stays near its equilibrium value, 
and the mutant population size grows until it reaches a non-negligible fraction of the total population size. 
The two other periods are the ones 
described for the soft selective sweep from standing variation. Moreover, the time needed for the mutant $a$ to fix in the population is of order 
$\log K$. We prove that the distribution of neutral alleles at the end of the sweep has 
different shapes according to the order of the recombination probability per reproductive event $r_K$ with respect to $1/\log K$. More precisely, we 
find two recombination regimes: a strong one where $r_K \log K$  is large, and a weak one where $r_K\log K$ is bounded.
In both recombination regimes, we give the asymptotic value of the final neutral allele proportion as a 
function of the ecological parameters and recombination probability $r_K$.
In the strong recombination regime, the frequent exchanges of neutral alleles between the $A$ and $a$-populations yield an homogeneous neutral 
repartition in the two populations and the latter is not modified by the sweep.
 In the weak recombination regime, the frequency of the neutral allele carried by the first mutant increases because it is linked to the 
positively selected allele.
This phenomenon has been called genetic hitch-hiking 
by Maynard Smith and Haigh \cite{smith1974hitch}. \\

The first studies of hitch-hiking, initiated by Maynard Smith and Haigh \cite{smith1974hitch}, have modeled the mutant population size as 
the solution of a deterministic logistic equation \cite{ohta1975effect,kaplan1989hitchhiking,stephan1992effect,
stephan2006hitchhiking}. 
Kaplan and coauthors \cite{kaplan1989hitchhiking} described the neutral genealogies by a structured coalescent 
where the background was the frequency of the 
beneficial allele. 
Barton \cite{barton1998effect}
 was the first to point out the importance of the stochasticity of the mutant population size and the errors made by ignoring it.
He divided the sweep in four periods: the two last ones are the analogues of the two last steps described in \cite{champagnat2006microscopic}, and 
the two first ones 
correspond to the first one in \cite{champagnat2006microscopic}. 
Following the approaches of \cite{kaplan1989hitchhiking} and \cite{barton1998effect}, 
a series of works studied the genealogies of neutral alleles sampled at the end of the sweep and took into account the randomness 
of the mutant population size
during the sweep. In particular,
 Durrett and Schweinsberg \cite{durrett2004approximating,schweinsberg2005random},
 Etheridge and coauthors \cite{etheridge2006approximate}, Pfaffelhuber and Studeny \cite{pfaffelhuber2007approximating}, and Leocard \cite{stephanie2009selective}  
described the population process by a structured coalescent and finely studied genealogies of neutral alleles during the sweep.
Eriksson
and coauthors \cite{eriksson2008accurate} described a deterministic approximation for the growth of the beneficial allele
frequency during a sweep, which leads to more accurate approximation than previous models for
large values of the recombination probability.
Unlike our model, in all these works, the population size was constant and the individuals' ``selective value'' did not depend on the population state,
 but only on the individuals' genotype.\\

The structure of the paper is the following. In Section \ref{model} we describe the model, review some results of  
\cite{champagnat2006microscopic} about the two-dimensional population process when we do not consider the neutral locus, and present the main results.
 In Section \ref{prel_results} we state a semi-martingale decomposition of neutral proportions, 
a key tool in the different proofs. Section \ref{proofstand} is devoted to the proof for the soft sweep from standing variation. It relies on a 
comparison of the population process with a four dimensional dynamical system. 
In Section \ref{section_couplage} we describe a coupling of the population process with two birth and death processes widely 
used in Sections 
\ref{proofstrong} and \ref{proofweak}, respectively devoted to the proofs for the strong and the weak recombination regimes 
of hard sweep.
The proof for the weak regime requires a fine study of the genealogies in a structured 
coalescent process  during the first phase of the selective sweep. We use here some ideas developed in \cite{schweinsberg2005random}.
Finally in the Appendix we state technical results.\\

This work stems from the papers of Champagnat \cite{champagnat2006microscopic} and Schweinsberg and Durrett \cite{schweinsberg2005random}.
 In the sequel, $c$ is used to denote a positive finite constant. Its value can change from 
line to line but it is always independent of the integer $K$ and the positive real number $\eps$. 
The set $\N:=\{1,2,...\}$ denotes the set of positive integers.

\section{Model and main results}\label{model}

We introduce the sets $\mathcal{A}=\{A,a\}$, $\mathcal{B}=\{b_1,b_2\}$, and $\mathcal{E}=\{A,a\}\times \{b_1,b_2\}$ to describe 
the genetic background of individuals. 
The state of the population will be given by the four dimensional Markov process
 $N^{(z,K)}=(N_{\alpha \beta}^{(z,K)}(t), (\alpha, \beta) \in \mathcal{E},t\geq 0)$ where $N_{\alpha \beta}^{(z,K)}(t)$ denotes the number of 
individuals with alleles $(\alpha,\beta)$ at time $t$ when the carrying capacity is $K \in \N$ and the initial state is 
$\lfloor  zK \rfloor $ with $z=(z_{\alpha \beta}, (\alpha, \beta) \in \mathcal{E}) \in \R_+^\mathcal{E}$.
We recall that $b_1$ and $b_2$ are neutral, thus ecological parameters only 
depend on the allele, $A$ or $a$, carried by the individuals at their first locus. There are the following:
\begin{enumerate}
 \item[$\bullet$] For $\alpha \in \mathcal{A}$, $f_\alpha$ and $D_\alpha$ denote the birth rate and the intrinsic death rate of an individual carrying allele 
$\alpha$. 
 \item[$\bullet$] For $(\alpha, \alpha') \in \mathcal{A}^2$, $C_{\alpha,\alpha'}$ represents the competitive pressure felt by an individual carrying
 allele $\alpha$ from an individual carrying allele $\alpha'$.
 \item[$\bullet$] $K \in \N$ is a parameter rescaling the competition between individuals. It can be interpreted as a scale of resources or area available,
 and is related to the concept of carrying capacity, which is the maximum population size that the 
environment can sustain indefinitely. In the sequel $K$ will be large.
 \item[$\bullet$] $r_K$ is the recombination probability per reproductive event. When two individuals with respective genotypes 
$(\alpha,\beta)$ and $(\alpha',\beta')$ in $\mathcal{E}$  give birth, the newborn 
individual, either is a clone of one parent and carries alleles $(\alpha,\beta)$ or $(\alpha',\beta')$ each
with probability $(1-r_K)/2$, 
or has a mixed genotype 
$(\alpha,\beta')$ or $(\alpha',\beta)$ each with probability $r_K/2$. 
\end{enumerate}
 We will use, for every $n=(n_{\alpha\beta}, (\alpha, \beta) \in \mathcal{E}) \in \Z_+^\mathcal{E}$, and $(\alpha,\beta) \in \mathcal{E}$, the notations
\begin{equation*} \label{notR4} n_{\alpha }=n_{\alpha b_1}+n_{\alpha b_2} \quad \text{and} \quad |n|=n_A+n_a.\end{equation*}
Let us now give the transition rates of $N^{(z,K)}$ when $N^{(z,K)}(t)=n\in \Z_+^\mathcal{E}$. 
An individual can die either from a natural death or from competition, whose strength depends on the carrying capacity $K$. 
Thus, the cumulative death rate of individuals 
$\alpha \beta$, with $(\alpha, \beta) \in \mathcal{E}$ is given by:
\begin{equation}\label{death_rate} {d}_{\alpha\beta}^K(n)=\left[ D_\alpha+C_{\alpha,A}n_{A}/K+C_{\alpha,a}n_{a}/K\right]{n_{\alpha\beta}}. \end{equation}
 An individual carrying allele $\alpha \in \mathcal{A}$ produces gametes with rate $f_\alpha$, thus the relative frequencies of gametes available 
for reproduction are 
 $$ p_{\alpha \beta}(n)=f_\alpha n_{\alpha \beta}/(f_A n_{A}+f_a n_{a}), \quad (\alpha, \beta) \in \mathcal{E}.$$
When an individual gives birth, he chooses his mate uniformly among the gametes available. Then the probability of giving birth to an individual 
of a given genotype depends on the parents (the couple $((a,b_2),(a,b_1))$ is not able to generate an individual
 $(A,b_1)$). We detail the computation of the cumulative birth rate of individuals $(A,b_1)$:
\begin{eqnarray*} 
b_{Ab_1}^K(n)&=&f_A n_{Ab_1}[ p_{Ab_1}+{p_{Ab_2}}/{2}+{p_{ab_1}}/{2}+(1-r_K){p_{ab_2}}/{2}]+f_A n_{Ab_2}[ {p_{Ab_1}}/{2}+r_K{p_{ab_1}}/{2}]\\
&&+f_a n_{ab_1}[ {p_{Ab_1}}/{2}+r_K{p_{Ab_2}}/{2} ]+f_a n_{ab_2} (1-r_K){p_{Ab_1}}/{2}\\
&=&f_A n_{Ab_1} + r_K f_A f_a (n_{ab_1}n_{Ab_2}-{n_{Ab_1}}n_{ab_2})/({f_A n_{A}+f_a n_{a}}).
\end{eqnarray*}
If we denote by $\bar{\alpha}$ (resp. $\bar{\beta}$) the complement of $\alpha$ in $\mathcal{A}$ (resp. $\beta$ in $\mathcal{B}$), 
we obtain in the same way the cumulative birth rate of individuals $(\alpha ,\beta)$:
\begin{equation}\label{birth_rate} {b}^K_{\alpha\beta}(n)=f_\alpha n_{\alpha\beta} + r_K f_a f_{A}\frac{n_{\bar{\alpha}\beta}n_{\alpha\bar{\beta}}
-n_{\alpha\beta}n_{\bar{\alpha}\bar{\beta}}}{f_A n_{A}+f_a n_{a}}, \quad (\alpha, \beta) \in \mathcal{E}.\end{equation}
The definitions of death and birth rates in \eqref{death_rate} and \eqref{birth_rate} ensure that the number of jumps is finite on every finite interval, and the population process is well defined.\\

When we focus on the dynamics of traits under selection $A$ and $a$, we get the process \\$ (N^{(z,K)}_A,N^{(z,K)}_a) $, which is 
also a birth and death process with competition. 
It has been studied in \cite{champagnat2006microscopic} and its cumulative death and birth rates, which are direct consequences of \eqref{death_rate} and \eqref{birth_rate}, satisfy for $\alpha \in \mathcal{A}$:
\begin{equation}\label{defdaba}
 {d}_{\alpha}^K(n)=\underset{\beta  \in \mathcal{B}}{\sum} {d}_{\alpha \beta}^K(n)=\Big[ D_\alpha+C_{\alpha,A}\frac{n_{A}}{K}+C_{\alpha,a}\frac{n_{a}}{K}\Big]{n_{\alpha}}, \quad {b}^K_{\alpha}(n)=\underset{\beta  \in \mathcal{B}}{\sum} {b}_{\alpha \beta}^K(n)=f_\alpha n_{\alpha}.
\end{equation}
It is proven in \cite{champagnat2006microscopic} that when $N^{(z,K)}_A$ and $N^{(z,K)}_a$ are of order $K$, 
the rescaled population process $(N^{(z,K)}_A/K,N^{(z,K)}_a/K)$ is well 
approximated by the dynamical system:
\begin{equation} \label{S1}
 \dot{n}_\alpha^{(z)}=(f_\alpha-D_\alpha-C_{\alpha,A}n_A^{(z)}-C_{\alpha,a}n_a^{(z)})n_\alpha^{(z)},\quad n_\alpha^{(z)}(0)=z_\alpha,\quad  \alpha \in \mathcal{A}.
 \end{equation}
More precisely Theorem 3 (b) in \cite{champagnat2006microscopic} states that for every compact
 subset 
$$B \subset (\R_+^{A\times \mathcal{B}}\smallsetminus (0,0)) \times (\R_+^{a\times \mathcal{B}}\smallsetminus (0,0))$$
and finite real number $T$, we have 
for any $\delta>0$,
\begin{equation}\label{result_champa1} \underset{K \to \infty}{\lim}\ \underset{z \in B}{\sup}\  \P\Big(\underset{0\leq t \leq T,\alpha \in \mathcal{A}}
{\sup} |N^{(z,K)}_\alpha(t)/K-{n}^{(z)}_\alpha(t)|\geq \delta \Big)=0.\end{equation}
Moreover, if we assume 
\begin{equation}\label{assumption}
 f_A>D_A, \quad f_a>D_a, \quad \text{and} \quad f_a-D_a>(f_A-D_A).\sup \Big\{{C_{a,A}}/{C_{A,A}}, {C_{a,a}}/{C_{A,a}}\Big\},
\end{equation}
then the dynamical system \eqref{S1} has a unique attracting equilibrium $(0,\bar{n}_a)$ for initial condition $z$ satisfying $z_a>0$, 
and two unstable steady states $(0,0)$ and $(\bar{n}_A,0)$, where
\begin{equation}\label{defnbara1}
 \bar{n}_\alpha=\frac{f_\alpha-D_\alpha}{C_{\alpha,\alpha}}>0,\quad \alpha \in \mathcal{A}.
\end{equation}
Hence, Assumption (\ref{assumption}) avoids the coexistence of alleles $A$ and $a$, and $\bar{n}_\alpha$ is the equilibrium density
of a monomorphic $\alpha$-population per unit of carrying capacity. This implies that when $K$ is large, the size of 
a monomorphic $\alpha$-population stays near $\bar{n}_\alpha K$ for a long time (Theorem 3 (c) in \cite{champagnat2006microscopic}). 
Moreover, if we introduce the invasion fitness $S_{\alpha \bar{\alpha}}$ of a mutant $\alpha$ in a population $\bar{\alpha}$,
\begin{equation}\label{deffitinv1}
 S_{\alpha \bar{\alpha}}=f_\alpha-D_\alpha-C_{\alpha,\bar{\alpha}}\bar{n}_{\bar{\alpha}} ,\quad \alpha \in \mathcal{A},
\end{equation}
it corresponds to the per capita growth 
rate of a mutant $\alpha$ when it appears in a population $\bar{\alpha}$ at its equilibrium density $\bar{n}_{\bar{\alpha}}$. Assumption 
\eqref{assumption} is equivalent to 
\begin{ass}\label{assumption_eq} Ecological parameters satisfy
$$ \bar{n}_A>0, \quad \bar{n}_a>0,\quad \text{and} \quad S_{Aa}<0<S_{aA}.$$
\end{ass}
\noindent Under Assumption \ref{assumption_eq}, with positive probability, the $A$-population becomes extinct and the $a$-population size reaches a vicinity of its 
equilibrium value $\bar{n}_a K$.\\

The case we are interested in is referred in population genetics as soft selection \cite{wallace1975hard}: 
it is both frequency and density dependent. This kind of selection has no influence on the order of the total population size, which has the same order as the carrying capacity $K$. 
However, the factor multiplying the carrying capacity can be modified, as the way the individuals use the resources 
depends on the ecological parameters.
We focus on strong selection coefficient, which are caracterized by $S_{aA}\gg 1/K$. In this case the selection outcompetes
the genetic drift. However we do not need to assume $S_{aA}\ll 1$ to get approximations unlike 
\cite{smith1974hitch,barton1998effect,stephan2006hitchhiking}.
To study the genealogy of the selected allele when the selection coefficient is weak ($S_{aA}K$ moderate or small)
we refer to the approach of Neuhauser and Krone \cite{neuhauser1997genealogy}.\\

Let us now present the main results of this paper. We introduce the extinction time of the $A$-population, and the fixation event 
of the $a$-population. For $(z,K) \in \R_+^{\mathcal{E}}\times \N$:
\begin{equation}\label{defText}T^{(z,K)}_{\text{ext}}:=\inf \Big\{ t \geq 0, N_A^{(z,K)}(t)=0  \Big\},\quad \text{and} \quad 
\text{Fix}^{(z,K)}:=\Big\{T^{(z,K)}_{\text{ext}}<\infty,  N_a^{(z,K)}(T^{(z,K)}_{\text{ext}})>0\Big\}  .\end{equation}
We are interested in the neutral allele proportions. We thus define for $t \geq 0$,
\begin{equation}\label{def_proportion} P_{\alpha, \beta}^{(z,K)}(t) = \frac{N^{(z,K)}_{\alpha \beta}(t)}{N^{(z,K)}_{\alpha}(t)}, \quad (\alpha,\beta) \in \mathcal{E}, K\in \N,  z \in \R_+^{\mathcal{E}},\end{equation}
 the proportion of alleles $\beta$ in the $\alpha$-population at time $t$.
More precisely, we are interested in these proportions at the end of the sweep, that is at time $T^{(z,K)}_{\text{ext}}$ when the last $A$-individual 
dies. We then introduce the neutral proportion at this time:
\begin{equation}\label{def_proportiontext} \mathcal{P}_{a, b_1}^{(z,K)}=P_{a, b_1}^{(z,K)}(T^{(z,K)}_{\text{ext}}). \end{equation}

We first focus on soft selective sweeps from standing variation. We assume that the alleles $A$ and $a$ were neutral and coexisted in a population with large carrying capacity $K$. 
At time $0$, an environmental change makes the allele $a$ favorable (in the sense of Assumption \ref{assumption_eq}). Before stating the result, 
let us introduce the function $F$, defined for every $(z,r,t) \in (\R_+^{\mathcal{E}})^* \times [0,1] \times \R_+$ by
\begin{equation}\label{defF} 
 F(z,r,t)=\int_0^t \frac{rf_Af_an_A^{(z)}(s)}{f_An_A^{(z)}(s)+f_an_a^{(z)}(s)}\exp\Big( -rf_Af_a\int_0^s \frac{n_A^{(z)}(u)+n_a^{(z)}(u)}{f_An_A^{(z)}(u)+f_an_a^{(z)}(u)}du \Big)ds,
\end{equation}
where $(n_A^{(z)},n_a^{(z)})$ is the solution of the dynamical system \eqref{S1}. 
We notice that $F:t \in \R^+ \mapsto F(z,r,t)$ is non-negative and non-decreasing. Moreover, if we introduce the function
$$h: (z,r,t) \in (\R_+^{\mathcal{E}})^* \times [0,1] \times \R_+ \mapsto rf_Af_a\int_0^t {n_A^{(z)}(s)}/({f_An_A^{(z)}(s)+f_an_a^{(z)}(s)})ds$$
non-decreasing 
in time, then 
$$ 0\leq F(z,r,t)\leq \int_0^t \partial_s h(z,r,s)e^{-h(z,r,s)}ds=e^{-h(z,r,0)}-e^{-h(z,r,t)}\leq 1. $$
Thus $F(z,r,t)$ has a limit in $[0,1]$ when $t$ goes to infinity and we can define 
\begin{equation}\label{limF} F(z,r):=\lim_{t \to \infty}F(z,r,t) \in [0,1] .\end{equation}
Noticing that for every $r \in [0,1]$ and $t \geq 0$, 
$$ 0\leq F(z,r)- F(z,r,t)\leq \int_t^\infty \frac{f_Af_an_A^{(z)}(s)}{f_An_A^{(z)}(s)+f_an_a^{(z)}(s)}ds \underset{t\to \infty}{\to}0,$$
we get that the convergence of $(F(z,r,t), t \geq 0)$ is uniform for $r \in [0,1]$.

In the case of a soft sweep from standing variation, the selected allele gets to fixation with high probability. More precisely, 
it is proven in \cite{champagnat2006microscopic} that under Assumption \ref{assumption_eq},
\begin{equation}\label{convfixcas1} \underset{K \to \infty}{\lim}\P(\text{Fix}^{(z,K)})= 1, 
\quad \forall z \in \R_+^{A\times \mathcal{B}} \times (\R_+^{a\times \mathcal{B}}\setminus (0,0)).\end{equation}
\noindent Then recalling \eqref{def_proportiontext} we get the following result whose proof is deferred to Section \ref{proofstand}:

\begin{theo} \label{main_result2}
Let $z$ be in $\R_+^{A\times \mathcal{B}} \times (\R_+^{a\times \mathcal{B}}\setminus (0,0))$ and Assumption \ref{assumption_eq} hold. 
Then on the fixation event $\textnormal{Fix}^{(z,K)}$, the proportion of alleles $b_1$ when the $A$-population becomes extinct
 (time $T_{\text{ext}}^{(z,K)}$) converges in probability:
$$ \underset{K \to \infty}{\lim}\P \Big(\mathbf{1}_{\textnormal{Fix}^{(z,K)}}\Big| \mathcal{P}_{a,b_1}^{(z,K)}-\Big[\frac{z_{Ab_1}}{z_A}F(z,r_K) +\frac{z_{ab_1}}{z_a}(1-F(z,r_K))\Big]\Big| >\eps\Big) = 0
, \quad \forall \eps>0.   $$
\end{theo}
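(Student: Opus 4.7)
The plan is to split the sweep at a large intermediate time $T$ (to be sent to infinity after $K\to\infty$): a deterministic phase on $[0,T]$ on which the four-dimensional rescaled process stays close to a limiting dynamical system, followed by an extinction phase on $[T,T^{(z,K)}_{\textnormal{ext}}]$ during which $N^{(z,K)}_A$ shrinks from a small fraction of $K$ to zero but the neutral proportion $P^{(z,K)}_{a,b_1}$ essentially no longer evolves.

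I would first write down the semi-martingale decomposition announced in Section \ref{prel_results} for the pair $(P^{(z,K)}_{A,b_1},P^{(z,K)}_{a,b_1})$. From the explicit birth and death rates \eqref{death_rate}-\eqref{birth_rate}, a direct computation shows that the non-recombination contributions cancel (because the ecological rates do not distinguish $b_1$ from $b_2$), so that only the recombination terms survive and the drift of $P^{(z,K)}_{a,b_1}$ reads
$$\frac{r_K f_A f_a N^{(z,K)}_A}{f_A N^{(z,K)}_A+f_a N^{(z,K)}_a}\bigl(P^{(z,K)}_{A,b_1}-P^{(z,K)}_{a,b_1}\bigr),$$
plus a martingale whose predictable bracket is $O(1/K)$ as long as $N^{(z,K)}_a$ stays of order $K$; the symmetric expression holds for $P^{(z,K)}_{A,b_1}$ with the roles of $A$ and $a$ swapped.

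Next, for any fixed $T>0$, I would combine \eqref{result_champa1} with a standard Gronwall argument to show that $(N^{(z,K)}_A/K,N^{(z,K)}_a/K,P^{(z,K)}_{A,b_1},P^{(z,K)}_{a,b_1})$ converges in probability, uniformly on $[0,T]$, to $(n^{(z)}_A,n^{(z)}_a,p_A,p_a)$, where $(p_A,p_a)$ solves the two-dimensional linear ODE obtained by replacing $N^{(z,K)}_\alpha/K$ by $n^{(z)}_\alpha$ in the previous drift, with initial condition $(z_{Ab_1}/z_A,z_{ab_1}/z_a)$. Since $p_A-p_a$ then satisfies a scalar exponential decay equation, direct integration yields
$$p_a(T)=\frac{z_{ab_1}}{z_a}\bigl(1-F(z,r_K,T)\bigr)+\frac{z_{Ab_1}}{z_A}F(z,r_K,T),$$
with $F(z,r,t)$ the function defined in \eqref{defF}. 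Thanks to the uniform in $r\in[0,1]$ convergence of $F(z,r,t)$ to $F(z,r)$ established just before \eqref{limF}, one can then pick $T$ large enough so that this quantity is within $\eps/2$ of the right-hand side of the theorem, for every $K$.

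The last and most delicate step is to control the extinction phase $[T,T^{(z,K)}_{\textnormal{ext}}]$, whose duration diverges like $\log K$ so that \eqref{result_champa1} cannot be used directly. I would pick $T$ such that $n^{(z)}_A(T)$ is as small as needed; by \eqref{result_champa1} and Assumption \ref{assumption_eq}, the $A$-population on $[T,T^{(z,K)}_{\textnormal{ext}}]$ can, with high probability, be stochastically dominated by a subcritical linear birth-and-death process started at height $\lfloor 2n^{(z)}_A(T)K\rfloor$ whose Malthusian parameter is close to $S_{Aa}<0$, while $N^{(z,K)}_a$ remains uniformly close to $\bar{n}_aK$. The total area under $N^{(z,K)}_A$ on the extinction phase is then of order $n^{(z)}_A(T)K$ in probability, hence the total drift contribution to $P^{(z,K)}_{a,b_1}$ is $O(r_K n^{(z)}_A(T))=o_T(1)$ and the martingale part contributes $o(1)$; the conclusion then follows by combining with the fixation statement \eqref{convfixcas1}.
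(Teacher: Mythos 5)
Your proposal is correct and follows essentially the same route as the paper: a comparison with the (recombination-including) dynamical system on a finite horizon, whose explicit integration produces $F(z,r_K,\cdot)$ as the weight between the initial proportions, followed by control of the extinction phase through domination of $N^{(z,K)}_A$ by a subcritical birth-and-death process and the semi-martingale decomposition of $P^{(z,K)}_{a,b_1}$ (this is exactly the paper's Lemmas \ref{lemapprox}, \ref{lemstudysd} and \ref{third_step}). The one imprecision is that keeping $N^{(z,K)}_a$ uniformly close to $\bar{n}_a K$ over the extinction phase, whose duration grows like $\log K$, cannot be deduced from the finite-horizon convergence \eqref{result_champa1} alone: it requires the exponential exit-time estimate \eqref{resSepsKz} borrowed from Champagnat, which is precisely how the paper's Lemma \ref{third_step} proceeds via the stopping time ${U}^K_\eps(z)$.
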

The neutral proportion at the end of a soft sweep from standing variation is thus a weighted mean of initial proportions in populations $A$ and $a$. 
In particular, a soft 
sweep from standing variation is 
responsible for a diminution of the number of neutral alleles with very low or very high proportions in the population, as remarked in 
\cite{prezeworski2005signature}. We notice that the 
weight $F(z,r_K)$ does not depend on the initial neutral proportions. It
only depends on $r_K$ and on the dynamical system \eqref{S1} with initial condition $(z_A,z_a)$. 
The proof consists in comparing the population process with the four dimensional dynamical system,
\begin{equation} \label{syst_dyn}  
 \dot{n}_{\alpha \beta}^{(z,K)} =  \left( f_\alpha -D_\alpha-C_{\alpha,A}{n}^{(z,K)}_{A}-C_{\alpha,a}{n}^{(z,K)}_{a}\right){n}^{(z,K)}_{\alpha  \beta}+
  rf_A f_a\frac{ {n}^{(z,K)}_{\bar{\alpha} \beta}{n}^{(z,K)}_{\alpha \bar{\beta}}-{n}^{(z,K)}_{\alpha \beta}
{n}^{(z,K)}_{\bar{\alpha}\bar{\beta}}}{f_A {n}^{(z,K)}_{A}+f_a {n}^{(z,K)}_{a}}, \quad (\alpha,\beta)\in \mathcal{E},
 \end{equation}
with initial condition $n^{(z,K)}(0)=z \in \R_+^\mathcal{E}$. Then by a change of variables, we can study the dynamical system \eqref{syst_dyn} 
and prove that 
\begin{equation*}\frac{n^{(z,K)}_{a,b_1}(\infty)}{n^{(z,K)}_{a}(\infty)}= \frac{z_{Ab_1}}{z_A} F(z,r_K)
+\frac{z_{ab_1}}{z_a}(1-F(z,r_K)),\end{equation*} which leads to the result.\\

Now we focus on hard selective sweeps: a mutant $a$ appears in a large population and gets to fixation. We assume that the mutant appears when the 
$A$-population is at ecological equilibrium, and carries the neutral allele $b_1$. In other words, recalling Definition \eqref{defnbara1}, we assume:

\begin{ass}\label{defrK}
 There exists $z_{Ab_1} \in ]0,\bar{n}_A[$ such that $N^{(z^{(K)},K)}(0)=\lfloor z^{(K)}K\rfloor$ with 
$$ z^{(K)}=(z_{Ab_1} ,\bar{n}_A-z_{Ab_1},K^{-1}, 0). $$
\end{ass}
\noindent In this case, the selected allele gets to fixation with positive probability. 
More precisely, it is proven {in \cite{champagnat2006microscopic} that} under Assumptions \ref{assumption_eq} and \ref{defrK},
\begin{equation} \label{proba_fix}\underset{K \to \infty}{\lim}\P\Big(\text{Fix}^{(z^{(K)},K)}\Big)= \frac{S_{aA}}{f_a}.\end{equation}
In the case of a strong selective sweep we will distinguish two different recombination regimes:

\begin{ass}\label{condstrong} Strong recombination
 $$\lim_{K \to \infty}\  r_K\log K=\infty .$$
\end{ass}

\begin{ass} \label{condweak} Weak recombination
 $$\limsup_{K \to \infty}\  r_K\log K<\infty .$$
\end{ass}
\noindent Recall \eqref{def_proportiontext} and introduce the real number
\begin{equation}
 \label{defrhoK} \rho_K:= 1-\exp \Big( -\frac{f_ar_K\log K}{S_{aA}} \Big).
\end{equation}
Then we have the following results whose proofs are deferred 
to Sections \ref{proofstrong} and \ref{proofweak}:

\begin{theo} \label{main_result}
Suppose that Assumptions \ref{assumption_eq} and \ref{defrK} hold. Then on the fixation event $\textnormal{Fix}^{(z^{(K)},K)}$ and under Assumption \ref{condstrong} 
or \ref{condweak}, the proportion of alleles $b_1$ when the $A$-population becomes extinct (time $T_{\text{ext}}^{(z^{(K)},K)}$) converges in probability. More precisely, 
if Assumption \ref{condstrong} holds,
$$ \lim_{K \to \infty}\P \Big(\mathbf{1}_{\textnormal{Fix}^{(z^{(K)},K)}}\Big|\mathcal{P}_{a,b_1}^{(z^{(K)},K)}
 -\frac{z_{Ab_1}}{z_A}\Big|>\eps\Big) = 0
, \quad \forall \eps>0,  $$
and if Assumption \ref{condweak}  holds, 
$$ \lim_{K \to \infty}\P \Big(\mathbf{1}_{\textnormal{Fix}^{(z^{(K)},K)}}\Big|\mathcal{P}_{a,b_1}^{(z^{(K)},K)} 
-\Big[ (1-\rho_K)+ \rho_K\frac{z_{Ab_1}}{z_A}\Big]\Big|>\eps\Big)= 0
, \quad \forall \eps>0.   $$
\end{theo}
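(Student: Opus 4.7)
Following \cite{champagnat2006microscopic}, decompose the sweep into three phases: Phase~I is the stochastic growth of the mutant $a$-population from one individual up to size $\lfloor\eps K\rfloor$, of duration $(1+o(1))\log K/S_{aA}$ conditionally on $\textnormal{Fix}^{(z^{(K)},K)}$; Phase~II is the intermediate regime where both $N_A^{(z^{(K)},K)}$ and $N_a^{(z^{(K)},K)}$ are of order $K$, has duration $O(1)$, and is well approximated by \eqref{S1}; Phase~III is the final stochastic decay of the $A$-population from size $\lfloor\eps K\rfloor$ to $0$, of duration $(1+o(1))\log K/|S_{Aa}|$. The neutral proportion $\mathcal{P}_{a,b_1}^{(z^{(K)},K)}$ is analysed separately on each phase, the main contribution coming from Phase~I.

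For Phase~I, I would use the coupling of Section~\ref{section_couplage} to sandwich $N_a^{(z^{(K)},K)}$ between two supercritical birth-and-death processes of Malthusian parameter $S_{aA}(1+o(1))$, while the $A$-population stays within $\delta K$ of $\bar{n}_A K$ with high probability. Following the structured-coalescent strategy of \cite{schweinsberg2005random}, sample a random $a$-individual alive at the end of Phase~I and trace its neutral-locus ancestry backwards in time. Along this lineage, recombination events that send the neutral ancestor into the $A$-class arrive at effective rate $r_K f_a (1+o(1))$ (because $n_A\gg n_a$ throughout Phase~I), and at each such event the ancestor is drawn uniformly from the $A$-population whose empirical $b_1$-proportion stays within $o(1)$ of $z_{Ab_1}/z_A$ (since $r_K\to 0$ and the $A$-population remains close to its ecological equilibrium). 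The probability of no backward recombination over the Phase~I horizon $\log K/S_{aA}$ is therefore
\begin{equation*}
\exp\Bigl(-\frac{r_K f_a \log K}{S_{aA}}\Bigr)(1+o(1)) = (1-\rho_K)(1+o(1)),
\end{equation*}
and on the complementary event the neutral allele carried by the lineage is $b_1$ with probability $z_{Ab_1}/z_A + o(1)$. Exchangeability of sampled lineages together with a second-moment bound derived from the semi-martingale decomposition of Section~\ref{prel_results} upgrades this expected-value computation to a convergence in probability of $P_{a,b_1}^{(z^{(K)},K)}$ at the end of Phase~I to $(1-\rho_K)+\rho_K z_{Ab_1}/z_A$.

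For Phase~II, the dynamical-system approximation used in the proof of Theorem~\ref{main_result2} applies on a bounded time window: since $r_K\to 0$ under either assumption and the phase lasts $O(1)$, the analogue of \eqref{defF} vanishes in the limit, so neutral proportions in each sub-population are essentially frozen at their Phase~I end-values. For Phase~III, symmetrically couple the decaying $A$-population with a subcritical birth-and-death process hitting $0$; a direct computation using the exponential decay of $N_A^{(z^{(K)},K)}$ shows that recombinations during this phase contribute only an $o(1)$ correction to $\mathcal{P}_{a,b_1}^{(z^{(K)},K)}$. This establishes the conclusion under Assumption~\ref{condweak}; the conclusion under Assumption~\ref{condstrong} follows by the same argument since $\rho_K\to 1$, and alternatively (as carried out in Section~\ref{proofstrong}) one couples the four sub-populations $N_{\alpha\beta}^{(z^{(K)},K)}$ with auxiliary birth-and-death processes whose recombination dynamics equilibrate the $b_1$-proportions in the $a$- and $A$-classes on the time scale $1/r_K\ll\log K$, well before Phase~I is over.

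The main obstacle is Phase~I in the weak regime: one must justify, uniformly over the $\log K$ time horizon and along lineages of a branching process whose size varies from $1$ to $\eps K$, that backward recombinations to the $A$-class occur at rate $r_K f_a(1+o(1))$, and that the $A$-population's empirical neutral composition scarcely moves during the whole phase. Once these two pieces are controlled, the combination with the semi-martingale decomposition of Section~\ref{prel_results} is routine.
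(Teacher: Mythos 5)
Your weak-regime plan is essentially the paper's proof: the three-phase decomposition, the coupling of Section \ref{section_couplage}, the backward genealogy in the spirit of \cite{schweinsberg2005random} with no-recombination probability $1-\rho_K$, the control of $P_{A,b_1}$ during the first phase, and the exchangeability/second-moment upgrade are exactly what Sections \ref{proofweak} and Appendix \ref{prooflemma} carry out (the "main obstacle" you name is resolved there by the upcrossing-number estimates of Lemma \ref{uphold}, in particular $r_K\sum_k \hat{\E}[U_k^K]/(k+1)\approx r_K f_a\log K/S_{aA}$).

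The genuine gap is your treatment of the strong regime. The assertion that the conclusion under Assumption \ref{condstrong} "follows by the same argument since $\rho_K\to 1$" does not hold: the genealogical estimates are not uniform in $r_K$ and several of them use $r_K\log K=O(1)$ in an essential way. Concretely, (i) your claim that the $A$-class neutral composition "scarcely moves" during Phase I rests (as in Lemma \ref{lemmajpro}) on bounding the drift term of \eqref{defM} by $r_K f_a \eps\, T_\eps^K/\bar{n}_A$, which is of order $r_K\eps\log K$ and blows up when $r_K\log K\to\infty$; to control $P_{A,b_1}$ in that regime one must exploit that the drift is proportional to the difference $G=P_{A,b_1}-P_{a,b_1}$ and prove that $G$ itself decays, which is precisely the supermartingale estimate on $G^2e^{r_Kf_at/2}$ of Lemma \ref{majespP} — a forward-in-time argument, not a corollary of the coalescent picture; (ii) the Poisson approximation and the concentration of the cumulative recombination intensity (variance of order $r_K^2\eps\log^2K$ in \eqref{varetatilde}) are only small when $r_K\log K$ stays bounded, so the lineage-by-lineage computation loses its error control under Assumption \ref{condstrong}. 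Your parenthetical "alternative" pointing to Section \ref{proofstrong} is indeed the route the paper takes (equilibration of the two neutral proportions at rate $r_Kf_a/2$, i.e.\ on the timescale $1/r_K\ll\log K$), but as your primary argument the extension "because $\rho_K\to1$" would fail. A related slip: Assumption \ref{condstrong} does not imply $r_K\to0$ (e.g.\ $r_K\equiv 1/2$ is allowed), so your Phase II argument "the analogue of \eqref{defF} vanishes since $r_K\to0$" is only valid in the weak regime; in the strong regime the bounded-window phase is handled by controlling $G^2$ (Lemma \ref{majdet}), again using that the two proportions have already equalized rather than that $r_K$ is small.
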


As stated in \cite{champagnat2006microscopic}, the selective sweep has a duration of order $\log K$. Thus, when $r_K \log K$ is large, a lot 
of recombinations occur during the sweep, and the neutral alleles are constantly exchanged by the 
populations $A$ and $a$. Hence in the strong recombination case, the sweep does not modifiy the neutral 
allele proportion. On the contrary,  when $r_K$ 
is of order $1/\log K$ the number of recombinations undergone by a given lineage does not go to infinity, and the frequency of the neutral 
allele $b_1$ carried by the first mutant $a$ increases. 
More precisely, we will show that the probability for a neutral lineage to undergo a recombination and be descended from an individual 
of type $A$ alive at the beginning of the sweep is close to $\rho_K$. 
Then to know 
the probability for such an allele to be
a $b_1$ or a $b_2$, we have to approximate the proportion of alleles $b_1$ in the $A$-population
 when the recombination occurs.
We will prove that this proportion stays close to the initial one $z_{Ab_1}/z_A$ during the first phase.
With probability $1-\rho_K$, a neutral allele originates from the first mutant. 
In this case it is necessarily a $b_1$. This gives the result for the weak recombination regime.
In fact the probability for a neutral lineage to undergo no recombination during the first phase is quite intuitive: broadly speaking, the probability to have 
no recombination at a birth 
event is $1-r_K$, the birth rate is $f_a$ and the duration of the first phase is $\log K/S_{aA}$. 
Hence as $r_K$ is small for large $K$, $1-r_K \sim \exp(-r_K)$ and the probability to undergo no recombination is approximately
$$ (1-r_K)^{f_a\log K/S_{aA}}\sim \exp(-r_K{f_a\log K/S_{aA}})=1-\rho_K. $$

\begin{rem}
 The limits in the two regimes are consistent in the sense that 
$$\underset{r_K \log K \to \infty}{\lim} \rho_K=1. $$
Moreover, let us notice that we can easily extend the results of Theorems \ref{main_result2} and \ref{main_result} to a finite number of possible alleles $b_1$, $b_2$, ..., $b_i$ on the neutral locus.
\end{rem}

\begin{rem}
As it will appear in the proofs (see Sections \ref{proofstrong} and \ref{proofweak}), the final neutral proportion
in the $a$ population is already 
 reached at the end of the first phase. In particular, the results are still valid if the sweep is not complete but the allele 
 $a$ only reaches a fraction $0<p<1$ of the population at the end of the sweep. The fact that the final neutral proportion is 
 mostly determined by the beginning of the sweep has already been noticed by Coop and Ralph in \cite{coop2012patterns}.
\end{rem}

\section{A semi-martingale decomposition}\label{prel_results}

The expression of birth rate in \eqref{birth_rate} shows that the effect of recombination depends on the recombination probability 
$r_K$ but also on the population state via the term $n_{\bar{\alpha}\beta}n_{\alpha\bar{\beta}}-n_{\alpha\beta}n_{\bar{\alpha}\bar{\beta}}$. 
Proposition \ref{mart_prop} states a semi-martingale representation 
of the neutral allele proportions and makes this interplay more precise.

\begin{pro}\label{mart_prop}
Let $(\alpha,z, K)$ be in $\mathcal{A}\times (\R_+^{\mathcal{E}})^*\times \N$. The process $(P_{\alpha,b_1}^{(z,K)}(t),t\geq 0)$ defined in 
\eqref{def_proportion} is a semi-martingale and we have the following decomposition:
\begin{multline} \label{defM} P_{\alpha,b_1}^{(z,K)}(t)=P_{\alpha,b_1}^{(z,K)}(0)+ M^{(z,K)}_\alpha(t)\\+
 r_Kf_A f_a \int_0^{t } {\mathbf{1}_{\{N_\alpha(s)\geq 1\}}}  \frac{N_{\bar{\alpha}b_1}^{(z,K)}(s)N^{(z,K)}_{\alpha b_2}(s)-N^{(z,K)}_{\alpha b_1}(s)N^{(z,K)}_{\bar{\alpha}b_2}(s)}
{(N^{(z,K)}_{\alpha}(s)+1)(f_A N^{(z,K)}_{A}(s)+f_a N^{(z,K)}_{a}(s))}ds , \end{multline}
where the process $(M_\alpha^{(z,K)}(t),t\geq 0)$ is a martingale bounded on every interval $[0,t]$ whose quadratic variation is given by \eqref{crochet}. 
\end{pro}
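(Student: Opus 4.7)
The plan is to treat $P_{\alpha,b_1}^{(z,K)}(t) = g(N^{(z,K)}(t))$ with $g:n\in\Z_+^{\mathcal{E}}\mapsto n_{\alpha b_1}/n_\alpha$ (extended by zero on $\{n_\alpha=0\}$), and apply Dynkin's formula for pure-jump Markov processes. Since $g$ is bounded by $1$ and, in view of \eqref{death_rate}--\eqref{birth_rate}, the total jump rate of $N^{(z,K)}$ is locally bounded, the process
\[ M^{(z,K)}_\alpha(t) := g(N^{(z,K)}(t)) - g(N^{(z,K)}(0)) - \int_0^t \mathcal{L}g(N^{(z,K)}(s))\, ds \]
is automatically a martingale, bounded on every finite interval, where $\mathcal{L}$ denotes the infinitesimal generator of $N^{(z,K)}$. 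The whole task thus reduces to computing $\mathcal{L}g$ and identifying it with the drift in \eqref{defM}.

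For the computation of $\mathcal{L}g(n)$ on $\{n_\alpha \ge 1\}$, note first that any birth or death of an $\bar\alpha$-individual leaves both $n_{\alpha b_1}$ and $n_\alpha$ unchanged and hence does not contribute. Writing $n^{\pm}_{\alpha\beta}$ for the state after the addition/removal of one $(\alpha,\beta)$-individual, the four relevant increments are
\[ g(n^{+}_{\alpha b_1}) - g(n) = \frac{n_{\alpha b_2}}{n_\alpha(n_\alpha+1)}, \qquad g(n^{+}_{\alpha b_2}) - g(n) = -\frac{n_{\alpha b_1}}{n_\alpha(n_\alpha+1)}, \]
together with the analogous expressions having $n_\alpha-1$ in place of $n_\alpha+1$ for the two death transitions. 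Weighting these by the rates \eqref{death_rate}--\eqref{birth_rate} and summing yields $\mathcal{L}g(n)$.

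The key point is the massive cancellation. Since $d^K_{\alpha b_1}(n)/n_{\alpha b_1} = d^K_{\alpha b_2}(n)/n_{\alpha b_2}$ by \eqref{death_rate}, the two death contributions combine into a multiple of $n_{\alpha b_1}n_{\alpha b_2}-n_{\alpha b_2}n_{\alpha b_1}=0$, and the clonal part $f_\alpha n_{\alpha b_j}$ of the birth rate \eqref{birth_rate} cancels for exactly the same reason. Only the recombination part of the two birth rates survives; combining these two terms, using the identity $n_{\alpha b_1}+n_{\alpha b_2}=n_\alpha$ to factor the numerator, and dividing by $n_\alpha(n_\alpha+1)$, one recovers
\[ \mathcal{L}g(n) = r_K f_A f_a\, \frac{n_{\bar\alpha b_1}n_{\alpha b_2}-n_{\alpha b_1}n_{\bar\alpha b_2}}{(n_\alpha+1)(f_A n_A+f_a n_a)}, \]
which matches the integrand of \eqref{defM}. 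The indicator $\mathbf{1}_{\{N_\alpha(s)\ge 1\}}$ absorbs the behaviour past the extinction time $T_{\text{ext}}^{(z,K)}$, where $g$ has been set to zero by convention.

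The formula \eqref{crochet} for the predictable quadratic variation is then obtained by the same procedure applied to $g^2$, namely by compensating the sum of squared jumps weighted by the rates \eqref{death_rate}--\eqref{birth_rate}. No step presents a genuine obstacle; the only item worth careful bookkeeping is the appearance of $(n_\alpha+1)$ rather than $n_\alpha$ in the denominator, which traces back to the fact that, after the death contributions have cancelled, only the birth jumps contribute to the drift.
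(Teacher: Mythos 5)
Your route---viewing $P_{\alpha,b_1}^{(z,K)}=g(N^{(z,K)})$ and computing the generator---is in substance the same computation as the paper's (your $\mathcal{L}g$ is exactly the paper's $\int_0^\infty\mu_K^\alpha\,d\theta$, and the carr\'e-du-champ you invoke for the bracket is its $\int_0^\infty(\mu_K^\alpha)^2d\theta$), and the cancellation of the death terms and of the clonal part of \eqref{birth_rate} is correctly identified. The genuine flaw is at the extinction boundary. With your convention $g=0$ on $\{n_\alpha=0\}$, the death-increment formulas with denominator $n_\alpha(n_\alpha-1)$ are invalid on $\{n_\alpha=1\}$: there the death of the unique $\alpha$-individual sends $g$ from $P_{\alpha,b_1}\in\{0,1\}$ to $0$, so $\mathcal{L}g$ contains an extra term $-\,d^K_{\alpha b_1}(n)\,\mathbf{1}_{\{n_\alpha=1\}}$ which cancels with nothing and is absent from the integrand of \eqref{defM}. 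Under that convention the statement is not merely unproven but false: start with a single $(\alpha,b_1)$-individual and with every $\bar\alpha$-individual carrying $b_1$; then $N_{\bar\alpha b_1}N_{\alpha b_2}-N_{\alpha b_1}N_{\bar\alpha b_2}\equiv 0$, the claimed drift vanishes identically, yet $\E[g(N(t))]=\P(N_\alpha(t)\geq 1)<1$ for $t>0$, so the $M_\alpha$ defined by \eqref{defM} cannot be a martingale. The convention the proposition actually uses---encoded by the indicators $\mathbf{1}_{\{N_\alpha\geq 1\}}$ in \eqref{defM} and $\mathbf{1}_{\{N_\alpha\geq 2\}}$ in \eqref{crochet}, i.e.\ by the paper's functions $\mu^\alpha_K$---is to \emph{freeze} $P_{\alpha,b_1}$ at the death of the last $\alpha$-individual. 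With that choice the death transitions on $\{n_\alpha=1\}$ contribute nothing, your cancellation argument is valid at every state, and both \eqref{defM} and \eqref{crochet} come out as stated; the same correction is needed in your bracket computation, which otherwise would pick up the square of the uncompensated jump at extinction.

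A secondary point: ``the jump rate is locally bounded, hence the Dynkin process is automatically a martingale'' is too quick, since the jump rates of $N^{(z,K)}$ grow quadratically in the population size and boundedness of $g$ alone does not give integrability of the compensator. What closes the argument is that the computed drift is uniformly bounded: $|N_{\bar\alpha b_1}N_{\alpha b_2}-N_{\alpha b_1}N_{\bar\alpha b_2}|\leq N_\alpha N_{\bar\alpha}$ and $f_AN_A+f_aN_a\geq f_{\bar\alpha}N_{\bar\alpha}$ give the bound $r_Kf_\alpha$ for the integrand of \eqref{defM} (this is \eqref{boundmart} in the paper), so the compensated process is a local martingale bounded on every finite interval, hence a true square-integrable martingale. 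Make this bound explicit rather than appealing to an automatic statement.
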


To lighten the presentation in remarks and proofs we shall mostly write $N$ instead of $N^{(z,K)}$.

\begin{rem} \label{remLD}
The process $N_{ab_2}N_{Ab_1}-N_{ab_1}N_{Ab_2}$ will play a major role in the dynamics of neutral proportions. Indeed it is a 
measure of the neutral proportion disequilibrium between the $A$ and $a$-populations as it satisfies:
\begin{equation}\label{equaldiffprop} N_{A}N_{a}(P_{A,b_1}-P_{a,b_1})={N_{ab_2}N_{Ab_1}-N_{ab_1}N_{Ab_2}}. \end{equation}
This quantity is linked with the linkage disequilibrium of the population, which is the occurrence of some allele combinations more or less often 
than would be expected from a random formation of haplotypes (see \cite{durrett2008probability} Section 3.3 for an introduction to this 
notion or \cite{mcvean2007structure} for a study of its structure around a sweep).
\end{rem}

\begin{rem}
 By taking the expectations in Proposition \ref{mart_prop} we can make a comparison with the results of Ohta and Kimura \cite{ohta1975effect}.
In their work the population size is infinite and the proportion of favorable allele $(y_t, t \geq 0)$ evolves as a deterministic logistic curve: 
$$ \frac{dy_t}{dt}=sy_t(1-y_t) .$$
Moreover, $x_1$ and $x_2$ denote the neutral proportions of a given allele in the selected and non selected 
populations respectively, and are modeled 
by a diffusion. 
By making the analogies
$$ N_e(t)=N_A(t)+N_a(t), \quad y_t=\frac{N_a(t)}{N_A(t)+N_a(t)}, \quad x_1(t)=\frac{N_{ab_1}(t)}{N_a(t)}, \quad x_2(t)=\frac{N_{Ab_1}(t)}{N_A(t)} ,$$
where $N_e$ is the effective population size, the results of \cite{ohta1975effect} can be written
$$ \frac{d\E[P_{\alpha,b_1}(t)]}{dt}=r \frac{\E[N_{\bar{\alpha}b_1}(t)N_{\alpha b_2}(t)-N_{\alpha b_1}(t)N_{\bar{\alpha}b_2}(t)]}
{N_{\alpha}(t)(N_{A}(t)+ N_{a}(t))}, $$
and
\begin{eqnarray*} \frac{d\E[P^2_{\alpha,b_1}(t)]}{dt}&=& \frac{\E[P_{\alpha b_1}(t)(1-P_{\alpha b_2}(t))]}{2N_\alpha(t)}+
 2r \frac{\E[N_{\alpha b_1}(t)(N_{\bar{\alpha}b_1}(t)N_{\alpha b_2}(t)-N_{\alpha b_1}(t)N_{\bar{\alpha}b_2}(t))]}
{N_{\alpha}^2(t)(N_{A}(t)+ N_{a}(t))}. \end{eqnarray*}
Hence the dynamics of the first moments are very similar to these that we obtain when we take equal birth rates $f_A=f_a$ and a recombination $r_K=r/f_a$.
In contrast, the second moments of neutral proportions are very different in the two models.
\end{rem}

\begin{proof}[Proof of Proposition \ref{mart_prop}]
In the vein of Fournier and M\'el\'eard \cite{fournier2004microscopic} we represent the population process in terms 
of Poisson measure. Let $Q(ds,d\theta)$ 
be a Poisson random measure on $\R_+^2$ with intensity $ds d\theta$, and $(e_{\alpha \beta}, (\alpha, \beta)\in \mathcal{E} )$ the canonical 
basis of $\R^\mathcal{E}$.
According to \eqref{defdaba} a jump occurs at rate 
$$\sum_{(\alpha,\beta)\in \mathcal{E}}(b_{\alpha \beta}^K(N)+d_{\alpha \beta}^K(N))=f_aN_a+d_a^K(N)+f_AN_A+d_A^K(N).$$
We decompose on possible jumps that may occur: births and deaths for $a$-individuals and births and deaths for $A$-individuals. 
Itô's formula with jumps (see \cite{ikeda1989stochastic} p. 66) yields for 
every function $h$ measurable and bounded on $\R_+^{\mathcal{E}}$:
\begin{eqnarray}\label{defN}
 h(N(t))&=&h(N(0))+ \int_0^t\int_{R_+} \Big\{ \underset{\alpha \in \mathcal{A}}{ \sum}   \Big(
h(N({s^-})+e_{\alpha b_1})\mathbf{1}_{0<\theta-\mathbf{1}_{\alpha=A}(f_{a}N_a(s^-)+d_{a}^K(N({s^-}))\leq b^K_{\alpha b_1}(N({s^-}))}\nonumber \\
&&\hspace{1cm}+h(N({s^-})+e_{\alpha b_2})\mathbf{1}_{b^K_{\alpha b_1}(N({s^-}))<\theta-\mathbf{1}_{\alpha=A}(f_{a}N_a(s^-)+d_{a}^K(N({s^-}))\leq f_\alpha N_\alpha({s^-})}\nonumber \\
&&\hspace{1cm}+h(N({s^-})-e_{\alpha b_1})\mathbf{1}_{0<\theta-f_\alpha N_\alpha(s^-)- \mathbf{1}_{\alpha=A}(f_{a}N_a(s^-)+d_{a}^K(N({s^-}))\leq d_{\alpha b_1}^K(N({s^-}))}\nonumber \\
&&\hspace{1cm}+h(N({s^-})-e_{\alpha b_2})\mathbf{1}_{d^K_{\alpha b_1}(N({s^-}))<\theta-f_\alpha N_\alpha(s^-)- \mathbf{1}_{\alpha=A}(f_{a}N_a(s^-)+d_{a}^K(N({s^-}))\leq d_{\alpha }^K(N({s^-}))}\Big)\nonumber\\
&&\hspace{1cm}- h(N({s^-}))\mathbf{1}_{ \theta \leq f_{a}N_a(s^-)+d_a^K(N({s^-}))+f_{A}N_A(s^-) +d_{A}^K(N({s^-}))}\Big\} Q(ds,d\theta).
\end{eqnarray}
Let us introduce the functions 
$\mu^\alpha_{K}$ defined for $\alpha \in \mathcal{A}$ and $(s,\theta)$ in $ \R_+ \times \R_+$ by, 
\begin{eqnarray} \label{defmua}\mu^\alpha_{K}(N,s,\theta)&=&\frac{\mathbf{1}_{N_\alpha(s)\geq 1}N_{\alpha b_2}(s)}{(N_{\alpha }(s)+1)N_{\alpha }(s)}
\mathbf{1}_{0<\theta-\mathbf{1}_{\alpha=A}(f_{a}N_a(s)+d_{a}^K(N({s}))\leq b^K_{\alpha b_1}(N({s}))}\\
&&- \frac{\mathbf{1}_{N_\alpha(s)\geq 1}N_{\alpha b_1}(s)}{(N_{\alpha}(s)+1)N_{\alpha}(s)}
\mathbf{1}_{b^K_{\alpha b_1}(N({s}))<\theta-\mathbf{1}_{\alpha=A}(f_{a}N_a(s)+d_{a}^K(N({s}))\leq f_\alpha N_\alpha({s})}\nonumber\\
&&- \frac{\mathbf{1}_{N_\alpha(s)\geq 2}N_{\alpha b_2}(s)}{(N_{\alpha}(s)-1)N_{\alpha}(s)} 
 \mathbf{1}_{0<\theta-f_\alpha N_\alpha(s)- \mathbf{1}_{\alpha=A}(f_{a}N_a(s)+d_{a}^K(N({s}))\leq d_{\alpha b_1}^K(N({s}))}\nonumber\\
&&+\frac{\mathbf{1}_{N_\alpha(s)\geq 2}N_{\alpha b_1}(s)}{(N_{\alpha}(s)-1)N_{\alpha}(s)}
\mathbf{1}_{d^K_{\alpha b_1}(N({s}))<\theta-f_\alpha N_\alpha(s)- \mathbf{1}_{\alpha=A}(f_{a}N_a(s)+d_{a}^K(N({s}))\leq d_{\alpha }^K(N({s}))}.\nonumber
\end{eqnarray}
Then we can represent the neutral allele proportions $P_{\alpha,b_1}$ as,
\begin{equation} \label{ecripbiamu} P_{\alpha,b_1}(t)=P_{\alpha,b_1}(0)+\int_0^t \int_{0}^\infty \mu^\alpha_{K}(N,s^-,\theta)Q(ds,d\theta),\quad  t\geq 0. \end{equation}
A direct calculation gives
$$ \int_0^\infty \mu^\alpha_{K}(N,s,\theta)d\theta=r_Kf_Af_a {\mathbf{1}_{\{N_\alpha(s)\geq 1\}}}\frac{N_{\bar{\alpha}b_1}(s)N_{\alpha b_2}(s)-N_{\alpha b_1}(s)N_{\bar{\alpha}b_2}(s)}{(N_{\alpha}(s)+1)(f_A N_{A}(s)+f_a N_{a}(s))} .$$
Thus if we introduce the compensated Poisson measure $\tilde{Q}(ds,d\theta):={Q}(ds,d\theta)-dsd\theta$, then
\begin{multline*} M_\alpha(t):= \int_0^t \int_0^\infty \mu^\alpha_{K} (N,s^-,\theta)\tilde{Q}(ds,d\theta)
 \\= P_{\alpha,b_1}(t)-P_{\alpha,b_1}(0)-r_Kf_A f_a   
\int_0^{t }{\mathbf{1}_{\{N_\alpha(s)\geq 1\}}}  \frac{N_{\bar{\alpha}b_1}(s)N_{\alpha b_2}(s)-N_{\alpha b_1}(s)N_{\bar{\alpha}b_2}(s)}{(N_{\alpha}(s)+1)(f_A N_{A}(s)+f_a N_{a}(s))}ds 
\end{multline*}
is a local martingale. By construction the process $P_{\alpha,b_1}$ has values in $[0,1]$ and as $r_K\leq 1$,
\begin{equation}\label{boundmart} \underset{s\leq t}{\sup}\ \Big|  r_Kf_A f_a \int_0^{s}  {\mathbf{1}_{\{N_\alpha\geq 1\}}}
\frac{ N_{\bar{\alpha}b_1}N_{\alpha b_2}-N_{\alpha b_1}N_{\bar{\alpha}b_2}}{(N_{\alpha}+1)(f_A N_{A}+f_a N_{a})} \Big|\leq r_K f_\alpha t \leq  f_\alpha t ,\quad t\geq 0.\end{equation}
Thus $M_\alpha$ is a square integrable pure jump martingale bounded on every finite interval with quadratic variation
\begin{eqnarray}\label{crochet}  \langle M_\alpha \rangle_{t} &=& 
\int_0^{t} \int_0^\infty\Big( \mu^\alpha_{K}(N,s,\theta)\Big)^2dsd\theta \nonumber\\
&=& \int_0^{t}\Big\{ P_{\alpha,b_1}(1-P_{\alpha,b_1}) \Big[\Big(D_\alpha+\frac{C_{\alpha, A}}{K}N_{A}+\frac{C_{\alpha,a}}{K}N_{a}\Big) \frac{\mathbf{1}_{N_\alpha\geq 2}N_\alpha}{(N_\alpha-1)^2} \nonumber \\
&&\hspace{.5cm}  + \frac{f_\alpha N_\alpha}{(N_{\alpha}+1)^2}\Big] +  
 r_Kf_A f_{a}{\mathbf{1}_{\{N_\alpha\geq 1\}}}\frac{(N_{\bar{\alpha}b_1}N_{\alpha b_2}-N_{\alpha b_1}N_{\bar{\alpha}b_2}) (1-2P_{\alpha,b_1})}{(N_{\alpha}+1)^2(f_A N_{A}+f_{a} N_{a})}\Big\} .
 \end{eqnarray}
This ends the proof of Proposition \ref{mart_prop}. \end{proof}

\begin{rem}
By definition of the functions $\mu^\alpha_K$ in \eqref{defmua} we have for all $(s,\theta)$ 
in $ \R_+ \times \R_+$,
\begin{equation}\label{muAmua0}
 \mu_K^A(N,s,\theta)\mu_K^a(N,s,\theta)=0.
\end{equation}
\end{rem}

Lemma  \ref{lemmualpha} states properties of the quadratic variation $\langle M_\alpha \rangle$ widely used in the forthcoming proofs. 
We introduce a compact interval containing 
the equilibrium size of the $A$-population,
\begin{equation} \label{compact1}I_\eps^K:= \Big[K\Big(\bar{n}_A-2\eps \frac{C_{A,a}}{C_{A,A}}\Big),K\Big(\bar{n}_A+2\eps \frac{C_{A,a}}{C_{A,A}}\Big)\Big]\cap \N, \end{equation}
and the stopping times $T^K_\eps$ and $S^K_\eps$, which denote respectively the hitting time of $\lfloor\eps K \rfloor$ by the mutant population and the exit time of $I_\eps^K$ by the resident population,
\begin{equation} \label{TKTKeps1} T^K_\eps := \inf \Big\{ t \geq 0, N^K_a(t)= \lfloor \eps K \rfloor \Big\},\quad S^K_\eps := \inf \Big\{ t \geq 0, N^K_A(t)\notin I_\eps^K \Big\}.  \end{equation}
Finally we introduce a constant depending on $\alpha \in \mathcal{A}$ and $\nu \in \R_+^*$,
\begin{equation}\label{defcalphanu} C(\alpha,v):=4D_\alpha+2f_\alpha+4(C_{\alpha,A}+C_{\alpha,a})\nu. \end{equation}

\begin{lem}\label{lemmualpha}
For $v<\infty$ and $t \geq 0$ such that $(N_A^{(z,K)}(t),N_a^{(z,K)}(t))\in 
[0,vK]^2$, 
\begin{equation}\label{crocheten1K1}
 {\frac{d}{dt}\langle M_\alpha^{(z,K)} \rangle_t }=\int_0^\infty\Big( \mu^\alpha_{K}(N^{(z,K)},t,\theta)\Big)^2d\theta\leq {C(\alpha,v)}\frac{\mathbf{1}_{N_\alpha(t)\geq 1}}{N_\alpha(t)},
 \quad \alpha \in \mathcal{A}.
\end{equation}
Moreover, under Assumptions \ref{assumption_eq} and \ref{defrK}, there exist $k_0 \in \N $, $\eps_0>0$ and a pure jump martingale $\bar{M}$ such that for 
$\eps\leq \eps_0$ and $t\geq 0$, 
\begin{equation} \label{dcrocheta} 
e^{\frac{S_{aA}}{2(k_0+1)}t\wedge {T}^K_\eps \wedge S^K_\eps}
 \int_0^\infty\Big( \mu^a_{K}(N^{(z^{(K)},K)},t\wedge {T}^K_\eps \wedge S^K_\eps,\theta)\Big)^2d
\theta \leq (k_0+1)C(a,2\bar{n}_A)  \bar{M}_{t\wedge {T}^K_\eps \wedge S^K_\eps},
\end{equation}
and 
\begin{equation}\label{tildemart}
 \E\Big[\bar{M}_{t\wedge {T}^K_\eps \wedge S^K_\eps} \Big]\leq \frac{1}{k_0+1}.
\end{equation}

\end{lem}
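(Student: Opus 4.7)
For the bound \eqref{crocheten1K1}, I would estimate each of the three groups of terms in the integrand of the explicit formula \eqref{crochet} for $\langle M_\alpha^{(z,K)}\rangle_t$ separately. The variance factor satisfies $P_{\alpha,b_1}(1-P_{\alpha,b_1})\le 1$; the bracket $D_\alpha+C_{\alpha,A}N_A/K+C_{\alpha,a}N_a/K$ is bounded by $D_\alpha+(C_{\alpha,A}+C_{\alpha,a})v$ on the event $\{(N_A,N_a)\in[0,vK]^2\}$; and the elementary inequalities $\mathbf{1}_{\{N_\alpha\ge 2\}}N_\alpha/(N_\alpha-1)^2\le 4\,\mathbf{1}_{\{N_\alpha\ge 1\}}/N_\alpha$ (using $N_\alpha-1\ge N_\alpha/2$ for $N_\alpha\ge 2$) and $N_\alpha/(N_\alpha+1)^2\le 1/N_\alpha$ take care of the death and birth contributions. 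The recombination term is controlled using $|N_{\bar\alpha b_1}N_{\alpha b_2}-N_{\alpha b_1}N_{\bar\alpha b_2}|\le N_\alpha N_{\bar\alpha}$ together with $r_K\le 1$ and $f_Af_a/(f_AN_A+f_aN_a)\le f_\alpha/N_{\bar\alpha}$. Summing these pieces yields exactly $C(\alpha,v)\,\mathbf{1}_{\{N_\alpha\ge 1\}}/N_\alpha$ with the constant $C(\alpha,v)=4D_\alpha+2f_\alpha+4(C_{\alpha,A}+C_{\alpha,a})v$.

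For \eqref{dcrocheta}--\eqref{tildemart}, I would first apply \eqref{crocheten1K1} with $v=2\bar n_A$, reducing the task to producing a martingale $\bar M$ with $\bar M_0=1/(k_0+1)$ that dominates $e^{\lambda t}\mathbf{1}_{\{N_a\ge 1\}}/[(k_0+1)N_a(t)]$ up to $T^K_\eps\wedge S^K_\eps$, where $\lambda=S_{aA}/(2(k_0+1))$. I would choose $\eps_0>0$ small enough so that on $\{N_A\in I_\eps^K,\,N_a\le\eps K\}$ the per-capita death rate $d^K_a(N)/N_a$ is bounded above by $f_a-S_{aA}/2$; this is possible because, by \eqref{deffitinv1}, this rate equals $f_a-S_{aA}$ at the equilibrium point $(\bar n_A K,0)$. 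Then I would construct $\bar M$ by applying It\^o's formula to $e^{\lambda t}h(N_a(t))$ for a suitably chosen bounded function $h:\mathbb{N}\to\mathbb{R}_+$ with $h(1)=1/(k_0+1)$ and $h(n)\ge 1/((k_0+1)n)$ for $n\ge 1$, selected so that the bounded-variation part $\int_0^t e^{\lambda s}[\lambda h+\mathcal{L}_a h](N_a(s))\,ds$ of the decomposition is non-positive along the stopped trajectory. The Doob--Meyer martingale part $\bar M_t$ then dominates $e^{\lambda t}h(N_a(t))\ge e^{\lambda t}\mathbf{1}_{\{N_a\ge 1\}}/[(k_0+1)N_a(t)]$, and boundedness of $h$ together with the stopping makes $\bar M$ a true martingale with $\mathbb{E}[\bar M_{t\wedge T^K_\eps\wedge S^K_\eps}]=\bar M_0=1/(k_0+1)$, which is \eqref{tildemart}.

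The main obstacle is the construction of $h$ and the verification of $\lambda h+\mathcal{L}_a h\le 0$. The naive ansatz $h(n)=1/((k_0+1)n)$ succeeds asymptotically (for large $n$ one has $\mathcal{L}_a h/h\sim-(f_a-q)/(k_0+1)\le-\lambda$) and at $n=1$ (where the death transition to the absorbing state $0$ produces a strongly negative drift $-d_1 h(1)$), but it fails at small values such as $n=2$, where a death sends $h$ upward from $1/(2(k_0+1))$ to $1/(k_0+1)$ at rate $\sim 2f_a$. I would resolve this by interpolating: take $h$ affine and slowly decreasing on an initial range $\{1,\ldots,n_0\}$ and matching $1/((k_0+1)n)$ beyond, with $n_0$ (of order $k_0$) and $k_0$ chosen so that on the linear range $\mathcal{L}_a h/h$ acquires a negative contribution of order $-S_{aA}$ sufficient to absorb $\lambda$. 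Equivalently, one may couple $N_a$ from below by a linear birth-and-death process of rates $(f_a,\,f_a-S_{aA}/2)$ and use explicit generating-function identities to construct $\bar M$ directly.
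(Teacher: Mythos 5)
Your handling of \eqref{crocheten1K1} is correct and is exactly what the paper leaves implicit (it calls it a direct consequence of \eqref{crochet}): your termwise bounds, using $N_\alpha/(N_\alpha-1)^2\le 4/N_\alpha$, $N_\alpha/(N_\alpha+1)^2\le 1/N_\alpha$, $|N_{\bar\alpha b_1}N_{\alpha b_2}-N_{\alpha b_1}N_{\bar\alpha b_2}|\le N_\alpha N_{\bar\alpha}$ and $f_Af_a/(f_AN_A+f_aN_a)\le f_\alpha/N_{\bar\alpha}$, reproduce the constant $C(\alpha,v)$ of \eqref{defcalphanu}. For \eqref{dcrocheta}--\eqref{tildemart} your overall strategy is also the paper's: apply It\^o's formula to $e^{\lambda t}h(N_a(t))$ with $\lambda=S_{aA}/(2(k_0+1))$, arrange a nonpositive finite-variation part up to $T^K_\eps\wedge S^K_\eps$, and let the martingale part be $\bar M$; and you correctly diagnose that the naive $h(n)=1/((k_0+1)n)$ violates the drift inequality at small $n$.

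The gap is in the construction of $h$, which is the heart of the second half of the lemma. The glued function you propose (affine on $\{1,\dots,n_0\}$ with $h(1)=1/(k_0+1)$, equal to $1/((k_0+1)n)$ beyond, $n_0$ of order $k_0$) fails the supermartingale condition at the junction: the affine slope is forced to be $\approx 1/((k_0+1)n_0)$ while the increment of the tail at $n_0$ is $\approx 1/((k_0+1)n_0(n_0+1))$, so at $n=n_0$ the generator gives approximately $-\tfrac{f_a}{(k_0+1)(n_0+1)}+\tfrac{f_a-S_{aA}}{k_0+1}+O(\eps)$, which is positive of order $(f_a-S_{aA})/(k_0+1)$ (a death climbs the steep affine piece at per-capita rate $\approx f_a-S_{aA}$, a birth only descends the shallow $1/n$ tail), far exceeding the available margin $\lambda h(n_0)=O\big(1/((k_0+1)^2n_0)\big)$. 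More generally a kink is only admissible if adjacent increments differ by a factor at most $\approx f_a/(f_a-S_{aA})$, which is incompatible with decreasing from $1/(k_0+1)$ at $n=1$ to $1/((k_0+1)n_0)$ at $n_0\sim k_0$. The paper's fix is the shifted reciprocal $h(n)=1/(n+k_0)$, i.e.\ $X(t)=e^{S_{aA}t/(2(k_0+1))}/(N_a(t)+k_0)$: then the ratio of adjacent increments is $(n+k_0+1)/(n+k_0-1)$, uniformly close to $1$ for $k_0$ large, so the single inequality \eqref{k0} holds for all $n\ge 1$ once $\eps\le\eps_0$, the finite-variation term in \eqref{itotilde} is nonpositive, and $h(n)\ge 1/((k_0+1)n)$ together with $h(1)=1/(k_0+1)$ yields \eqref{dcrocheta} and \eqref{tildemart}. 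Your alternative, coupling $N_a$ from below by a linear birth-and-death process and invoking generating-function identities, does not circumvent this: \eqref{dcrocheta} demands a pathwise domination by a martingale, so one still needs exactly such a shifted-reciprocal supermartingale for the comparison process, and that construction is the missing step.
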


\begin{proof}
 Equation \eqref{crocheten1K1} is a direct consequence of \eqref{crochet}. To prove \eqref{dcrocheta} and \eqref{tildemart}, let us first notice that
 according to Assumption \ref{assumption_eq}, there exists $k_0 \in \N$ such that for $\eps$ small enough and $k \in \Z_+$,
 $$\frac{f_a(k_0+k-1)-(D_a+C_{a,A}\bar{n}_A+\eps(C_{a,a}+2C_{A,a}C_{a,A}/C_{A,A}))(k_0+k+1)}{k_0+k-1}\geq \frac{S_{aA}}{2}.$$
This implies in particular that for every $t <  {T}^K_\eps \wedge S^K_\eps$, 
\begin{equation}\label{k0}  \frac{
f_a N_a(t)(N_a(t)+k_0-1)-d_a^K(N(t))(N_a(t)+k_0+1)}{(N_a(t)+k_0-1)(N_a(t)+k_0+1)}\geq \frac{S_{aA}N_a(t)}{2(N_a(t)+k_0+1)}\geq \frac{S_{aA}\mathbf{1}_{N_a(t)\geq 1}}{2(k_0+1)},\end{equation} 
where the death rate $d_a^K$ has been defined in \eqref{defdaba}. For sake of simplicity let us introduce the process $X$ defined as follows:
$$ {X(t)=\frac{1}{N_a({t})+k_0}\exp\Big(\frac{S_{aA}t}{2(k_0+1)}\Big),\quad \forall t\geq 0.} $$
Applying Itô's formula with jumps we get for every $t\geq 0$:
\begin{multline} \label{itotilde}
 X({t\wedge {T}^K_\eps \wedge S^K_\eps})=\bar{M}(t\wedge {T}^K_\eps \wedge S^K_\eps)+\\
\int_0^{t\wedge {T}^K_\eps \wedge S^K_\eps}\Big(\frac{S_{aA}}{2(k_0+1)}- \frac{f_a N_a(s)(N_a(s)+k_0-1)-d_a^K(N(s))(N_a(s)+k_0+1)
}{(N_a(s)+k_0-1)(N_a(s)+k_0+1)}\Big)X(s)ds,
\end{multline}
where the martingale $\bar{M}$ has the following expression:
\begin{multline} \label{exprtildeM} \bar{M}(t)=\frac{1}{k_0+1}+\int_0^t\int_{\R_+}\tilde{Q}(ds,d\theta)\exp\Big(\frac{S_{aA}s}{2(k_0+1)}\Big)\\
\Big[ 
 \frac{ \mathbf{1}_{\theta \leq f_aN_a(s^-)} }{N_a(s^-)+k_0+1}
+\frac{ \mathbf{1}_{ f_aN_a(s^-)<\theta \leq f_aN_a(s^-)+d_a(N(s^-))} }{N_a(s^-)+k_0-1}
-\frac{\mathbf{1}_{\theta \leq f_aN_a(s^-)+d_a(N(s^-))} }{N_a(s^-)+k_0} 
\Big] .\end{multline}
 Thanks to \eqref{k0} the integral in \eqref{itotilde} is nonpositive. Moreover, according to \eqref{crocheten1K1}, for $t \leq {T}^K_\eps \wedge 
S^K_\eps$, 
as $2\eps{C_{A,a}}/{C_{A,A}}\leq \bar{n}_A$ for $\eps$ small enough,
\begin{multline}  \int_0^\infty\Big( \mu^a_{K}(N^{(z^{(K)},K)},t,\theta)\Big)^2d
\theta \leq C(a,2\bar{n}_A)\frac{\mathbf{1}_{N_a(t)\geq 1}}{N_a(t)}\\
\leq (k_0+1)C(a,2\bar{n}_A)X(t)\exp\Big(-\frac{S_{aA}t}{2(k_0+1)}\Big) ,\end{multline}
which ends the proof.\end{proof}

\section{Proof of Theorem \ref{main_result2}}\label{proofstand}
In this section we suppose that Assumption \ref{assumption_eq} holds. For $\eps\leq C_{a,a}/C_{a,A} \wedge 2|S_{Aa}|/C_{A,a}$ and $z$ 
in $\R_+^{A\times \mathcal{B}}\times (\R_+^{a\times \mathcal{B}} \setminus (0,0) )$ we introduce a deterministic time $t_{\eps}(z)$ after which the solution $(n_A^{(z)},n_a^{(z)})$ of the dynamical system \eqref{S1} is close to the stable equilibrium $(0,\bar{n}_a)$:
\begin{equation}\label{deftepsz1} t_{\eps}(z):=\inf \big\{ s \geq 0,\forall t \geq s, ({n}_A^{(z)}(t),{n}_a^{(z)}(t))\in 
[0,\eps^2/2]\times[\bar{n}_a-\eps/2,\infty) \big\}. \end{equation}
Once $(n_A^{(z)},n_a^{(z)})$ has reached the set $[0,\eps^2/2]\times[\bar{n}_a-\eps/2,\infty)$ it 
never escapes from it. Moreover, according to Assumption \ref{assumption_eq} on 
the stable equilibrium, $t_\eps(z)$ is finite.\\

  First we compare the population process with the four dimensional dynamical system \eqref{syst_dyn} on the
 time interval $[0, t_\eps(z)]$. Then we study this dynamical system and get an approximation of
the neutral proportions at time $t_\eps(z)$. Finally, we state that
during the A-population extinction period, this proportion stays nearly constant.

\subsection{Comparison with a four dimensional dynamical system}

Recall that $n^{(z,K)}=(n^{(z,K)}_{\alpha \beta},(\alpha,\beta)\in \mathcal{E})$ is the solution of the dynamical system \eqref{syst_dyn}  
with initial condition $z$. Then we have the following comparison result:

\begin{lem}\label{lemapprox}
 Let $z$ be in $\R_+^\mathcal{E}$ and $\eps$ be in $\R_+^*$. Then
\begin{equation}\label{EK2}
\underset{K \to \infty}{\lim}\  \sup_{s\leq t_\eps(z)}\ \|{N}^{(z,K)}(s)/K-n^{(z,K)}(s)  \|=0 \quad \text{in probability}
\end{equation}
where $\|. \|$ denotes the $L^1$-Norm on $\R^\mathcal{E}$.
\end{lem}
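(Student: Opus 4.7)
The strategy is the classical fluid-limit argument for density-dependent jump Markov processes in the spirit of Fournier--M\'el\'eard \cite{fournier2004microscopic}: represent the rescaled process as a drift plus vanishing martingale, then compare with the ODE via Gronwall. Using the Poisson representation from the proof of Proposition \ref{mart_prop}, I first write for each $(\alpha,\beta)\in\mathcal{E}$
\begin{equation*}
\frac{N^{(z,K)}_{\alpha\beta}(t)}{K} = \frac{\lfloor z_{\alpha\beta}K\rfloor}{K} + \int_0^t \frac{b^K_{\alpha\beta}(N^{(z,K)}(s)) - d^K_{\alpha\beta}(N^{(z,K)}(s))}{K}\, ds + \mathcal{M}^K_{\alpha\beta}(t),
\end{equation*}
where $\mathcal{M}^K_{\alpha\beta}$ is a square-integrable martingale whose predictable quadratic variation is bounded by $C|N^{(z,K)}(t)|/K^2\, dt$. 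A direct computation from \eqref{death_rate}--\eqref{birth_rate} shows that this drift, expressed in terms of $N^{(z,K)}/K$, is precisely the vector field on the right-hand side of \eqref{syst_dyn} with $r=r_K$.

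To get a uniform a priori bound on $|N^{(z,K)}|$ I observe that, since recombination preserves the total population, the total cumulative birth rate is $f_AN_A+f_aN_a\leq (f_A\vee f_a)|N^{(z,K)}|$; hence $|N^{(z,K)}|$ is stochastically dominated by a Yule process of rate $f_A\vee f_a$ started from $\lfloor|z|K\rfloor$. This yields a constant $C_T$ and an event of probability tending to one on which $|N^{(z,K)}(s)|\leq C_T K$ for all $s\leq t_\eps(z)$. The main difficulty is then the singularity of the recombination term at $f_An_A+f_an_a=0$: the vector field of \eqref{syst_dyn} is only locally Lipschitz and requires keeping $N_a$ bounded away from zero. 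Since $z_a>0$, the deterministic trajectory $n_a^{(z)}$ stays above some constant $2c_0>0$ on the compact interval $[0,t_\eps(z)]$, so I introduce
\begin{equation*}
\tau_K:=\inf\{t\geq 0 : N^{(z,K)}_a(t)\leq c_0 K \text{ or } |N^{(z,K)}(t)|\geq C_T K\};
\end{equation*}
on $[0,\tau_K\wedge t_\eps(z)]$ the drift of \eqref{syst_dyn} becomes globally Lipschitz with some constant $L$ uniform in $K$ (the denominator is bounded below by $f_a c_0$ and the numerator depends polynomially on bounded quantities, while $r_K\leq 1$).

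Combining the decomposition with this Lipschitz estimate and with $\|N^{(z,K)}(0)/K-z\|\leq |\mathcal{E}|/K$ produces, for $t\leq \tau_K\wedge t_\eps(z)$,
\begin{equation*}
\sup_{s\leq t}\|N^{(z,K)}(s)/K-n^{(z,K)}(s)\| \leq \frac{|\mathcal{E}|}{K} + L\int_0^t \sup_{u\leq s}\|N^{(z,K)}(u)/K-n^{(z,K)}(u)\|\, ds + \sum_{\alpha,\beta}\sup_{s\leq t}|\mathcal{M}^K_{\alpha\beta}(s)|.
\end{equation*}
Gronwall's lemma, together with Doob's $L^2$ inequality applied to each $\mathcal{M}^K_{\alpha\beta}$ (whose quadratic variation on the event $|N^{(z,K)}|\leq C_TK$ is $O(1/K)$), then yields $\sup_{s\leq \tau_K\wedge t_\eps(z)}\|N^{(z,K)}(s)/K-n^{(z,K)}(s)\|\to 0$ in probability. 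A bootstrap closes the argument: on the event where this supremum is smaller than $c_0$ one has $N^{(z,K)}_a(s)/K > n_a^{(z)}(s)-c_0\geq c_0$ throughout, hence $\tau_K\geq t_\eps(z)$, and \eqref{EK2} follows. The hard part is precisely this circular interplay between the Lipschitz control (which requires $N_a$ bounded away from zero) and the lower bound on $N_a$ (which requires proximity to the deterministic trajectory); the stopping-time localization is the standard device that unravels it.
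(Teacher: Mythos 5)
Your proof is correct and takes essentially the same route as the paper's: there too the rescaled process is written, via a Poisson representation, as initial condition plus the drift given by the vector field of \eqref{syst_dyn} plus a martingale that vanishes as $K\to\infty$, and the conclusion follows from the Lipschitz property of the rescaled rates and Gronwall's lemma. The only difference is one of bookkeeping: you make explicit the localization (Yule domination of the total mass, the stopping time keeping $N_a/K$ away from zero so the recombination denominator is controlled, and the bootstrap), and you control the martingale by Doob's $L^2$ inequality, whereas the paper handles these points implicitly by invoking Ethier--Kurtz and local Lipschitzness on compacts.
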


\begin{proof}
The proof relies on a slight modification of Theorem 2.1 p. 456 in Ethier and Kurtz \cite{EK}.
According to \eqref{death_rate} and \eqref{birth_rate}, the rescaled birth and death rates
\begin{equation}\label{deftildeb} \tilde{b}_{\alpha \beta}^K(n)=\frac{1}{K}b_{\alpha \beta}^K(Kn)=f_\alpha n_{\alpha\beta} + r_K f_a f_{A}\frac{n_{\bar{\alpha}\beta}n_{\alpha\bar{\beta}}
-n_{\alpha\beta}n_{\bar{\alpha}\bar{\beta}}}{f_A n_{A}+f_a n_{a}}, \quad (\alpha, \beta) \in \mathcal{E}, n \in N^\mathcal{E}, \end{equation}
and
\begin{equation}\label{deftilded} \tilde{d}_{\alpha \beta}(n)=\frac{1}{K}d_{\alpha \beta}^K(Kn)=\left[ D_\alpha+C_{\alpha,A}n_{A}+C_{\alpha,a}n_{a}\right]{n_{\alpha\beta}}, 
\quad (\alpha, \beta) \in \mathcal{E}, n \in N^\mathcal{E}, \end{equation}
are Lipschitz and bounded on every compact subset of $ \N^\mathcal{E}$. The only difference with \cite{EK} is that $ \tilde{b}_{\alpha \beta}^K$ 
depends on $K$ via the term $r_K$.
Let $(Y_i^{(\alpha\beta)},i \in \{1,2\}, (\alpha,\beta)\in \mathcal{E})$ be eight independent standard Poisson processes.
From the representation of the population process $N^{(z,K)}$ in \eqref{defN} we see that the process $(\bar{N}^{(z,K)}(t), t \geq 0)$ defined by
\begin{equation*}
 \bar{N}^{(z,K)}(t) =\lfloor zK\rfloor+\underset{(\alpha,\beta)\in \mathcal{E}}{\sum}\Big[{Y}_1^{(\alpha\beta)}\Big( \int_0^t{b}_{\alpha \beta}^K(\bar{N}^{(z,K)}({s})) ds\Big)-
{Y}_2^{(\alpha\beta)}\Big( \int_0^t{d}_{\alpha \beta}^K(\bar{N}^{(z,K)}({s})) ds\Big)\Big],
\end{equation*}
has the same law as $({N}^{(z,K)}(t), t \geq 0)$. 
Applying Definitions \eqref{deftildeb} and \eqref{deftilded} we get:
$$\frac{ \bar{N}^{(z,K)}(t)}{K} =\frac{\lfloor zK\rfloor}{K}+Mart^{(z,K)}(t)+\int_0^t \underset{(\alpha,\beta)\in \mathcal{E}}{\sum}
e_{\alpha \beta}\Big( \tilde{b}_{\alpha \beta}^K\Big(\frac{ \bar{N}^{(z,K)}({s})}{K}\Big) -\tilde{d}_{\alpha \beta}\Big(\frac{ \bar{N}^{(z,K)}({s})}{K}\Big) \Big)ds, $$
where we recall that $(e_{\alpha \beta}, (\alpha,\beta)\in\mathcal{E})$ is the canonical basis of $\R_+^\mathcal{E}$ and the martingale $Mart^{(z,K)}$ is defined by
$$Mart^{(z,K)} := \frac{1}{K}\underset{(\alpha,\beta)\in \mathcal{E}}{\sum}\Big[\tilde{Y}_1^{(\alpha\beta)}\Big(K \int_0^t\tilde{b}_{\alpha \beta}^K\Big(\frac{ \bar{N}^{(z,K)}({s})}{K}\Big) ds\Big)-
\tilde{Y}_2^{(\alpha\beta)}\Big(K \int_0^t\tilde{d}_{\alpha \beta}\Big(\frac{ \bar{N}^{(z,K)}({s})}{K}\Big) ds\Big)\Big]$$
and $(\tilde{Y}_i^{(\alpha\beta)}(u)={Y}_i^{(\alpha\beta)}(u)-u,u \geq 0,i \in \{1,2\}, (\alpha,\beta)\in \mathcal{E})$ are the Poisson processes centered at their expectation.
We also have by definition
$$n^{(z,K)}(t)= z+\int_0^t \underset{(\alpha,\beta)\in \mathcal{E}}{\sum}
e_{\alpha \beta}\Big( \tilde{b}_{\alpha \beta}^K(n^{(z,K)}({s})) -\tilde{d}_{\alpha \beta}(n^{(z,K)}({s})) \Big)ds. $$
Hence, for every $t\leq t_\eps(z)$,
\begin{multline*}
 \Big|\frac{\bar{N}^{(z,K)}(t)}{K} -n^{(z,K)}(t)\Big|\leq \Big|\frac{\lfloor zK \rfloor}{K}-z \Big|+  \Big|Mart^{(z,K)}(t)\Big|  \\
 +
\int_0^t \underset{(\alpha,\beta)\in \mathcal{E}}{\sum}\Big|
\Big( \tilde{b}_{\alpha \beta}^K -\tilde{d}_{\alpha \beta}\Big)\Big(\frac{\bar{N}^{(z,K)}({s})}{K}\Big) -
\Big( \tilde{b}_{\alpha \beta}^K -\tilde{d}_{\alpha \beta}\Big)\Big(n^{(z,K)}({s})\Big) \Big|ds ,
\end{multline*}
and there exists a finite constant $\mathcal{K}$ such that
\begin{equation*}
 \Big|\frac{\bar{N}^{(z,K)}(t)}{K} -n^{(z,K)}(t)\Big|\leq \frac{1}{K}+  \Big|Mart^{(z,K)}(t)\Big|
 + \mathcal{K}
\int_0^t \Big|
\frac{\bar{N}^{(z,K)}({s})}{K} - n^{(z,K)}({s}) \Big|ds.
\end{equation*}
But following Ethier and Kurtz, we get
$$ \underset{K \to \infty}{\lim}\underset{s\leq t_\eps(z)}{\sup}|Mart^{(z,K)}|=0, \quad \text{a.s.}, $$
and using Gronwall's Lemma we finally obtain
\begin{equation*}
\underset{K \to \infty}{\lim}\  \sup_{s\leq t_\eps(z)}\ \|\bar{N}^{(z,K)}(s)/K-n^{(z,K)}(s)  \|=0 \quad \text{a.s.}
\end{equation*}
As the convergence in law to a constant is equivalent to the convergence in probability to the same constant, the result follows.
\end{proof}

Once we know that the rescaled population process is close to the solution of the dynamical system \eqref{syst_dyn}, 
we can study the dynamical system.

\begin{lem}\label{lemstudysd}
Let $z$ be in $\R^{\mathcal{E}}_+$ such that $z_A>0$ and $z_a>0$. Then $n_a^{(z,K)}(t)$ and $n^{(z,K)}_{ab_1}(t)$ have a finite limit when 
$t$ goes to infinity, and 
there exists a positive constant $\eps_0$ such that for every $\eps\leq \eps_0$, 
\begin{equation*}
 \Big| \frac{n^{(z,K)}_{ab_1}(\infty)}{n^{(z,K)}_{a}(\infty)}- \frac{n^{(z,K)}_{ab_1}(t_\eps(z))}{n^{(z,K)}_{a}(t_\eps(z))} \Big|
\leq \frac{2f_a\eps^2}{\bar{n}_A|S_{aA}|}.
\end{equation*}
\end{lem}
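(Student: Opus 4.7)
My strategy is to reduce the four-dimensional system \eqref{syst_dyn} to the two-dimensional Lotka--Volterra system \eqref{S1} via summation, and then to analyze the ratio $n_{ab_1}^{(z,K)}/n_a^{(z,K)}$ directly through its differential equation.

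First, I observe that summing \eqref{syst_dyn} over $\beta\in\mathcal{B}$ cancels the recombination term, since $n_{\bar\alpha b_1}n_{\alpha b_2}-n_{\alpha b_1}n_{\bar\alpha b_2}$ and $n_{\bar\alpha b_2}n_{\alpha b_1}-n_{\alpha b_2}n_{\bar\alpha b_1}$ are opposite. Hence $(n_A^{(z,K)},n_a^{(z,K)})$ satisfies \eqref{S1} with initial condition $(z_A,z_a)$. By Assumption \ref{assumption_eq} and since $z_a>0$, the convergence $(n_A^{(z,K)}(t),n_a^{(z,K)}(t))\to(0,\bar n_a)$ as $t\to\infty$ yields at once the existence of the finite limit $n_a^{(z,K)}(\infty)=\bar n_a>0$.

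Second, differentiating by the quotient rule, the linear selection/competition factor common to $\dot n_{ab_1}^{(z,K)}$ and $\dot n_a^{(z,K)}$ cancels and only the recombination contribution remains:
\begin{equation*}
\frac{d}{dt}\frac{n_{ab_1}^{(z,K)}}{n_a^{(z,K)}}=\frac{r_K f_A f_a\bigl(n_{Ab_1}^{(z,K)}n_{ab_2}^{(z,K)}-n_{ab_1}^{(z,K)}n_{Ab_2}^{(z,K)}\bigr)}{n_a^{(z,K)}\bigl(f_A n_A^{(z,K)}+f_a n_a^{(z,K)}\bigr)}.
\end{equation*}
Rewriting the numerator as $n_{Ab_1}n_a-n_{ab_1}n_A$ gives $|n_{Ab_1}n_{ab_2}-n_{ab_1}n_{Ab_2}|\leq n_A\,n_a$, and combined with the lower bound $n_a^{(z,K)}(t)\geq\bar n_a-\eps/2$ valid for $t\geq t_\eps(z)$ this produces a pointwise bound of the form $|\frac{d}{dt}(n_{ab_1}^{(z,K)}/n_a^{(z,K)})|\leq c\,n_A^{(z,K)}(t)$ with $c$ depending only on the ecological parameters.

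Third, I exploit the exponential decay of $n_A^{(z,K)}$ after time $t_\eps(z)$. On $[t_\eps(z),\infty)$ the Lotka--Volterra equation for $n_A^{(z,K)}$ reduces to $\dot n_A^{(z,K)}=(S_{Aa}+O(\eps))n_A^{(z,K)}$, and by choosing $\eps_0$ small enough the effective decay rate is at most $S_{Aa}/2<0$. Since $n_A^{(z,K)}(t_\eps(z))\leq\eps^2/2$, this yields $\int_{t_\eps(z)}^\infty n_A^{(z,K)}(s)\,ds=O(\eps^2)$. Integrating the derivative bound gives both the Cauchy property of $n_{ab_1}^{(z,K)}/n_a^{(z,K)}$ (hence the existence of the finite limit $n_{ab_1}^{(z,K)}(\infty)=\bar n_a\cdot\lim_{t\to\infty}n_{ab_1}^{(z,K)}(t)/n_a^{(z,K)}(t)$) and the quantitative estimate announced in the lemma.

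The main technical obstacle is not the strategy itself but the bookkeeping: one must choose $\eps_0$ small enough so that the perturbation of the decay rate of $n_A^{(z,K)}$ near the equilibrium $(0,\bar n_a)$ and the choice of lower bound on $f_A n_A^{(z,K)}+f_a n_a^{(z,K)}$ combine to give exactly the constant $2f_a/(\bar n_A|S_{aA}|)$ stated in the lemma.
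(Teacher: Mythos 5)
Your proof is correct and follows essentially the same route as the paper: your quotient-rule identity is precisely the equation for $p_{a,b_1}^{(z,K)}=n_{ab_1}^{(z,K)}/n_a^{(z)}$ obtained there via a change of variables, and your tail estimate is the same combination of $|n_{Ab_1}n_{ab_2}-n_{ab_1}n_{Ab_2}|\leq n_A n_a$, $r_K\leq 1$, the lower bound $n_a\geq \bar n_a-\eps/2$ on $[t_\eps(z),\infty)$, and the exponential decay $\dot n_A\leq (S_{Aa}/2)\,n_A$ after $t_\eps(z)$. The only difference, immaterial for the lemma itself, is that the paper integrates the reduced system explicitly to get $p_{a,b_1}^{(z,K)}(t)=p_{a,b_1}^{(z,K)}(0)-\big(p_{a,b_1}^{(z,K)}(0)-p_{A,b_1}^{(z,K)}(0)\big)F(z,r_K,t)$ and deduces the existence of the limit from the convergence of $F$ (a formula it then reuses to identify the limit in Theorem \ref{main_result2}), whereas you obtain the limit from the integrability of the derivative bound via a Cauchy argument.
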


\begin{proof}
First notice that by definition of the dynamical systems \eqref{S1} and \eqref{syst_dyn}, $n_\alpha^{(z,K)}=n_\alpha^{(z)}$ for 
$\alpha \in \mathcal{A}$ and $z \in \R^{\mathcal{E}}$.
Assumption \ref{assumption_eq} ensures that $n_a^{(z)}(t)$ goes to $\bar{n}_a$ at infinity. If we define the functions
$$p_{\alpha,b_1}^{(z,K)}= {n^{(z,K)}_{\alpha b_1}}/{{n}^{(z)}_{\alpha}}, \quad \alpha \in \mathcal{A},\quad 
\text{and} \quad g^{(z,K)}=p_{A,b_1}^{(z,K)}-p_{a,b_1}^{(z,K)},$$
we easily check that $\phi:(n_{Ab_1}^{(z,K)},n_{Ab_2}^{(z,K)},n_{ab_1}^{(z,K)},n_{ab_2}^{(z,K)})\mapsto (n_A^{(z)},n_a^{(z)}, 
g^{(z,K)},p_{a,b_1}^{(z,K)})$ defines a change of variables from $(\R_+^{*})^\mathcal{E}$ to 
$\R_+^{2*} \times ]-1,1[\times]0,1[$, and \eqref{syst_dyn} is equivalent to:
\begin{equation} \label{syst_dyn2}  
\left\{ \begin{array}{l} 
 \dot{n}^{(z)}_{\alpha} =( f_\alpha -(D_\alpha+C_{\alpha,A}{n}^{(z)}_{A}+C_{\alpha,a}{n}^{(z)}_{a})){n}^{(z)}_{\alpha},\quad  \alpha \in \mathcal{A}  \\
 \dot{g}^{(z,K)}  =-g^{(z,K)}\Big(r_K f_Af_a(n^{(z)}_A+n^{(z)}_a)/(f_An^{(z)}_A+f_an^{(z)}_a)\Big) \\
 \dot{p}_{a,b_1}^{(z,K)} =g^{(z,K)}\Big(r_K f_Af_an^{(z)}_A/(f_An^{(z)}_A+f_an^{(z)}_a)\Big) ,\end{array} \right.
 \end{equation}
with initial condition 
$$(n_A^{(z)}(0),n_a^{(z)}(0), 
g^{(z,K)}(0),p_{a,b_1}^{(z,K)}(0))=(z_A,z_a,z_{Ab_1}/z_A-z_{ab_1}/z_a ,z_{ab_1}/z_a).$$
Moreover, a direct integration yields
\begin{equation}\label{expreproba}  {p^{(z,K)}_{a,b_1}(t)}= {p^{(z,K)}_{a,b_1}(0)}-( {p_{a,b_1}^{(z,K)}(0)}-{p_{A,b_1}^{(z,K)}(0)} )F(z,r_K,t),\end{equation}
where $F$ has been defined in \eqref{defF}. According to \eqref{limF}, $F(z,r_K,t)$ has a finite limit when $t$ goes to infinity. Hence $p^{(z,K)}_{a,b_1}$ 
also admits a limit at infinity. 
Let $\eps\leq |S_{Aa}|/C_{A,a}\wedge C_{a,a}/ C_{A,a}\wedge \bar{n}_a/2$, and $t_\eps(z)$ defined in \eqref{deftepsz1}. Then for $t \geq t_\eps(z)$,
$$  \dot{n}_A^{(z)}(t)  \leq  (f_A-D_A-C_{Aa}(\bar{n}_a-\eps/2))n_A^{(z)}(t)  \leq  S_{Aa} {n}_A^{(z)}(t)/2<0.$$ 
Recalling that $r_K\leq 1$ and $|g(t)|\leq 1$ for all 
$t\geq 0$ we get:
\begin{equation} \label{diff_lim}\Big|p^{(z,K)}_{a,b_1}(\infty)-p^{(z,K)}_{a,b_1}(t_\eps(z))\Big|\leq 
\int_{t_\eps(z)}^\infty \frac{f_Af_an^{(z)}_A}{f_An^{(z)}_A+f_an^{(z)}_a}\leq \frac{f_A\eps^2}{\bar{n}_a-\eps/2}\int_0^\infty e^{S_{Aa}s/2}ds 
\leq \frac{2f_a\eps^2}{(\bar{n}_a-\eps/2)|S_{aA}|},\end{equation}
which ends the proof.\end{proof}

\subsection{$A$-population extinction}
The deterministic approximation \eqref{syst_dyn} fails when the $A$-population size becomes too small. We shall compare 
$N_A$ with birth and death processes to study the last period of the mutant invasion. We show that during this period, the number of $A$ individuals is so small
 that it has no influence on the neutral proportion in the $a$-population, which stays nearly 
constant. 
Before stating the result, we recall Definition \eqref{defText} and introduce the compact set $\Theta$:
\begin{equation}\label{defTheta}
\Theta:=\big\{ z \in \R_+^{A\times \mathcal{B}} \times \R_+^{a\times \mathcal{B}},  z_A\leq \eps^2 \quad \text{and} \quad  |z_a-\bar{n}_a |\leq \eps \big\},
\end{equation}
the constant $M''=3+(f_a+C_{a,A})/C_{a,a}$, and the stopping time:
\begin{equation}\label{SKeps}
{U}^K_\eps(z):=\inf \Big\{ t\geq 0, N^{(z,K)}_{A}(t)>   \eps K \text{ or } |N^{(z,K)}_a(t)-\bar{n}_a K|>M''\eps K \Big\}.
\end{equation}

\begin{lem}\label{third_step}
Let $z$ be in $\Theta$. Under Assumption \ref{assumption_eq},  there exist two positive finite constants $c$ and $\eps_0$ such that for $\eps\leq \eps_0$,
$$ \underset{K \to \infty}{\limsup} \ \P\Big(\underset{t \leq T^{(z,K)}_{\textnormal{ext}}}{\sup}\Big|P_{a,b_1}^{(z,K)}(t)-P_{a,b_1}^{(z,K)}(0)\Big|>\eps\Big)\leq c\eps. $$
\end{lem}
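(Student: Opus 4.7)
The plan is to combine the semi-martingale decomposition of Proposition \ref{mart_prop} with subcritical branching process comparisons for $N_A$ on the time interval $[0,T_{\text{ext}}]$. Under Assumption \ref{assumption_eq} the $A$-population is outcompeted by an almost-resident $a$-population, so $N_A$ should behave like a near-subcritical birth-and-death process with effective death rate close to $|S_{Aa}|$. Because the initial size is $\lfloor z_A K\rfloor \leq \eps^2 K$, this should be enough to control both the accumulated $A$-mass and the extinction time.

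First I would work on the event $\{T_{\text{ext}}^{(z,K)} \leq U^K_\eps(z)\}$, and separately bound the probability of its complement. The key observation is that for $z \in \Theta$, the initial state lies deep inside the attraction basin of $(0,\bar{n}_a)$, so a classical coupling argument (as in Section 3 of \cite{champagnat2006microscopic}) bounds $N_A$ from above by a birth-and-death process of individual birth rate $f_A$ and death rate $D_A + C_{A,a}(\bar{n}_a-M''\eps)$, which is subcritical with parameter close to $|S_{Aa}|$ for $\eps$ small. This gives both that $N_A$ stays below $\eps K$ throughout (with overwhelming probability, since it starts at $\eps^2 K$) and that the hitting time of $0$ by $N_A$ is $O(\log K)$ with good tails. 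Meanwhile, $N_a$ can be compared with a logistic birth-and-death process with carrying capacity close to $\bar{n}_a K$, guaranteeing that $|N_a - \bar{n}_a K| \leq M'' \eps K$ until $T_{\text{ext}}$, with probability at least $1-c\eps$.

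Next, Proposition \ref{mart_prop} with $\alpha = a$ gives the decomposition $P_{a,b_1}(t) - P_{a,b_1}(0) = M_a(t) + D(t)$. Using the elementary bound $|N_{Ab_1}N_{ab_2}-N_{ab_1}N_{Ab_2}| \leq N_A N_a$ together with $N_a \geq (\bar{n}_a - M''\eps)K$ on the good event, the drift satisfies
\begin{equation*}
|D(t)| \leq r_K f_A f_a \int_0^{t \wedge T_{\text{ext}}} \frac{N_A(s) N_a(s)}{(N_a(s)+1)(f_A N_A(s) + f_a N_a(s))} ds \leq \frac{c}{K} \int_0^{T_{\text{ext}}} N_A(s) ds.
\end{equation*}
Using the subcritical comparison, $\E[\int_0^{T_{\text{ext}}} N_A(s) ds] \leq c\, N_A(0)/|S_{Aa}| \leq c \eps^2 K$, so by Markov the drift is $\leq c\eps$ with probability $1 - c\eps$. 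For the martingale part, Lemma \ref{lemmualpha} yields $\langle M_a\rangle_{T_{\text{ext}}} \leq c \int_0^{T_{\text{ext}}} \mathbf{1}_{N_a\geq 1}/N_a \, ds \leq c T_{\text{ext}}/K \leq c \log K/K$ on the good event, and Doob's $L^2$ inequality gives $\sup_{t \leq T_{\text{ext}}} |M_a(t)| = O(\sqrt{\log K / K}) = o(\eps)$ with high probability.

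The main obstacle is properly handling the dependence between the two stopping times: controlling $\int_0^{T_{\text{ext}}} N_A\, ds$ requires the $N_a$ population to remain near $\bar{n}_a K$, but $T_{\text{ext}}$ itself could a priori be long. The cleanest route is to work up to $T_{\text{ext}} \wedge U^K_\eps$ throughout all the estimates, then close the argument by showing $\P(U^K_\eps < T_{\text{ext}}) \leq c\eps$. The subcritical comparison for $N_A$ together with the logistic comparison for $N_a$ near its equilibrium (exploiting $z \in \Theta$) are standard birth-and-death estimates, and concentrating them to yield the $c\eps$ rate — rather than a vanishing one — is what forces the bound on the right-hand side of the lemma to be linear in $\eps$ rather than going to zero.
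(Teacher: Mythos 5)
Your proposal is correct and follows essentially the same route as the paper: couple $N_A$ with the subcritical birth-and-death process (birth rate $f_A$, death rate $D_A+C_{A,a}(\bar n_a-M''\eps)$) to get extinction in time $O(\log K)$ and $\E\int N_A\,ds \leq c\eps^2K$, control $N_a$ near $\bar n_aK$ up to the stopping time $U^K_\eps(z)$ (the paper does this via \eqref{resSepsKz}), and then split $P_{a,b_1}-P_{a,b_1}(0)$ via Proposition \ref{mart_prop}, bounding the drift by Markov's inequality (giving the $c\eps$) and the martingale by Doob's inequality with \eqref{crocheten1K1} (vanishing as $K\to\infty$). The stopping-time interplay you flag is resolved exactly as you suggest: the paper works on $\{T^1_0<L(\eps,K)<U^K_\eps(z)\}$, whose complement has probability at most $c\eps$.
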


\begin{proof}
Let $z $ be in $\Theta$ and $Z^1$ be a birth and death process with individual birth rate $f_A$, individual death rate 
$D_A+(\bar{n}_a-M''\eps)C_{A,a}$, and initial state $\lceil \eps^2 K \rceil$.
Then on $[0,U_\eps^K(z)[$, $N_A$ and $Z^1$ have the same birth rate, 
and $Z^1$ has a smaller death rate than $N_A$.  Thus according to Theorem 2 in \cite{champagnat2006microscopic}, 
we can construct the processes $N$ and $Z^1$ on the same probability space such that:
\begin{equation}\label{compaZ1} N_A(t)\leq Z^1_t, \quad \forall t < U_\eps^K(z). \end{equation}
 Moreover, if we denote by $T_0^1$ the extinction time of $Z^1$, $ T_0^1:=\inf\{ t\geq 0, Z_t^1=0 \} ,$ and recall that 
\begin{equation}\label{decroissance}  f_A-D_A-(\bar{n}_a-M''\eps)C_{A,a}=S_{Aa}+M''C_{A,a}\eps<S_{Aa}/2<0, \quad \forall \eps <|S_{Aa}|/(2M''C_{Aa}), \end{equation}
we get according to (\ref{ext_times}) that for $z\leq \eps^2$ and 
$$L(\eps,K)=2 \log K/|S_{Aa}+M''\eps C_{A,a}|,$$
\begin{eqnarray*} 
 \P_{\lceil z K \rceil}\Big(T_0^1\leq L(\eps,K)  \Big) \geq \exp\Big( \lceil \eps^2 K \rceil \Big[\log (K^2-1)-\log(K^2-f_A(D_A+(\bar{n}_a-M''\eps)C_{A,a})^{-1})  \Big] \Big).
\end{eqnarray*}
Thus:
\begin{equation} \label{sim}
 \lim_{K \to \infty}\P_{\lceil z K \rceil}\Big(T_0^1< L(\eps,K)  \Big)=  1.
\end{equation}
Moreover, Equation \eqref{resSepsKz} ensures the existence of a finite $c$ such that for $\eps$ small enough,
\begin{equation} \label{LS}
 \P \Big(L(\eps,K)<U_\eps^{K}(z) \Big)\geq 1-c\eps. \end{equation}
Equations \eqref{sim} and \eqref{LS} imply
\begin{equation} \label{LS2}
 \underset{K \to \infty}{\liminf} \ \P \Big(T_0^1< L(\eps,K)<U_\eps^{K}(z) \Big)\geq 1-c\eps \end{equation}
for a finite $c$ and $\eps$ small enough. According to Coupling \eqref{compaZ1} we have the inclusion 
$$\{T_0^1< L(\eps,K)<U_\eps^{K}(z)\}\subset\{T^K_{ext}< L(\eps,K)<U_\eps^{K}(z)\}.$$
Adding \eqref{LS2} we finally get:
\begin{equation} \label{LS3}
 \underset{K \to \infty}{\liminf}\ \P(T_{ext}^K< L(\eps,K)<U_\eps^{K}(z))\geq 1-c\eps. \end{equation}
Recall the martingale decomposition of $P_{a,b_1}$ in \eqref{defM}. To bound the difference $|P_{a,b_1}(t)-P_{a,b_1}(0)|$ we bound independently 
the martingale $M_a(t)$ and the integral $|P_{a,b_1}(t)-P_{a,b_1}(0)-M_a(t)|$.
On one hand Doob's Maximal Inequality and Equation \eqref{crocheten1K1} imply:
\begin{eqnarray}\label{maj_mart} \P\Big(\underset{t \leq  L(\eps,K) \wedge U_\eps^K }{\sup}|M_a(t)|>\frac{{\varepsilon}}{2} \Big) 
& \leq & \frac{4}{\varepsilon^2}\E\Big[ \langle M_a\rangle_{ L(\eps,K)\wedge U_\eps^{K}(z)} \Big] \nonumber \\
& \leq &  \frac{4 C(a,\bar{n}_a+M''\eps)L(\eps,K)}{\eps^2K(\bar{n}_a-M''\eps)}\nonumber\\
& = & \frac{8 C(a,\bar{n}_a+M''\eps)\log K}{\eps^2K(\bar{n}_a-M''\eps)|S_{Aa}+M''\eps C_{A,a}|}. \end{eqnarray}
On the other hand the inequality $|N_{Ab_1}N_{a b_2}-N_{a b_1}N_{Ab_2}|\leq N_AN_a$ yields for $t \geq 0$
\begin{eqnarray*} \Big| \int_0^{t\wedge U_\eps^K(z)}\frac{r_Kf_Af_a (N_{Ab_1}N_{a b_2}-N_{a b_1}N_{Ab_2})}
{(N_a+1)(f_AN_A+f_aN_a)} \Big| \leq \int_0^{{t\wedge U_\eps^K(z)}} \frac{f_A N_A}{(\bar{n}_a- \eps M'')K}.
  \end{eqnarray*}
Hence decomposition \eqref{defM}, Markov's Inequality, and Equations \eqref{compaZ1}, \eqref{espviemort} and  \eqref{decroissance} yield
\begin{equation} \label{maj_int} \P\Big(\Big|(P_{a,b_1}-M_a)(t\wedge U_\eps^K(z))-P_{a,b_1}(0)\Big|>\frac{\eps}{2}\Big)\leq 
 \frac{ 2f_A\eps^2}{\eps(\bar{n}_a- \eps M'')} \int_0^{t} e^{\frac{S_{Aa}s}{2}}ds
 \leq  \frac{4f_A\eps}{(\bar{n}_a- \eps M'')|S_{Aa}|}.
\end{equation}
Taking the limit of  \eqref{maj_mart} when $K$ goes to infinity and adding \eqref{maj_int} end the proof. 
\end{proof}

\subsection{End of the proof of Theorem \ref{main_result2}}
Recall Definitions \eqref{defText} and \eqref{deftepsz1}. We have:
\begin{multline*} \Big|P_{a,b_1}^{(z,K)}(T_{\text{ext}}^{(z,K)})-p_{a,b_1}^{(z,K)}(\infty)\Big|  \leq \Big|P_{a,b_1}^{(z,K)}(T_{\text{ext}}^{(z,K)})-P_{a,b_1}^{(z,K)}(t_\eps(z))\Big|+\\ 
 \Big|P_{a,b_1}^{(z,K)}(t_\eps(z))-p_{a,b_1}^{(z,K)}(t_\eps(z))\Big|+\Big|p_{a,b_1}^{(z,K)}(t_\eps(z))-p_{a,b_1}^{(z,K)}(\infty)\Big|. \end{multline*}
To bound the two last terms we use respectively Lemmas \ref{lemapprox} and \ref{lemstudysd}. For the first term 
of the right hand side, 
 \eqref{result_champa1} ensures that with high probability, $N^{(z,K)}(t_\eps(z)) \in \Theta$ and $t_\eps(z)<T_{\text{ext}}^{(z,K)}$. Lemma \ref{third_step},
 Equation \eqref{convfixcas1} and Markov's Inequality allow us to conclude that for $\eps$ small enough
$$ \underset{K \to \infty}{\limsup}\ \P(\mathbf{1}_{\text{Fix}^{(z,K)}}|P_{a,b_1}^{(z,K)}(T_{\text{ext}}^{(z,K)})-p_{a,b_1}^{(z,K)}(\infty)|>3\eps)\leq c{\eps}, $$
for a finite $c$, which is equivalent to the convergence in probability. Adding \eqref{expreproba} completes the proof.

\section{A coupling with two birth and death processes} \label{section_couplage}

In Sections \ref{proofstrong} and \ref{proofweak} we suppose that Assumptions \ref{assumption_eq} and \ref{defrK} hold and we denote by $N^K$ the process $N^{(z^{(K)},K)}$.
As it will appear in the proof of Theorem \ref{main_result} the first period of mutant invasion, which ends at time $T_\eps^K$ when the mutant population size 
hits $\lfloor \eps K \rfloor$, is the most important for the neutral proportion dynamics. Indeed, the neutral proportion in the $a$-population has already reached its final value at time $T_\eps^K$. Let us describe a coupling of the process $N_a^K$ with two birth and death processes which will be a key argument to control 
the growing of the population $a$ during the first period. We recall Definition \eqref{TKTKeps1} and define for $\eps < S_{aA}/( 2 {C_{a,A}C_{A,a}}/{C_{A,A}}+C_{a,a} )$,
\begin{equation}\label{def_s_-s_+1}
s_-(\eps):=\frac{S_{aA}}{f_a}-\eps\frac{ 2 {C_{a,A}C_{A,a}}+C_{a,a}{C_{A,A}}}{f_a{C_{A,A}}}, \quad \text{and} \quad s_+(\eps):=\frac{S_{aA}}{f_a}
+ 2\varepsilon \frac{C_{a,A}C_{A,a}}{f_aC_{A,A}}.
\end{equation}
Definitions 
\eqref{defdaba} and \eqref{deffitinv1} ensure that for $t < T_\eps^K \wedge S_\eps^K$,
\begin{equation}\label{ineqtxmort}
f_a (1-s_+(\eps))\leq \frac{{d}_{a}^K(N^K(t))}{N_a^K(t)}= f_a-S_{aA}+\frac{C_{a,A}}{K}(N^K_A(t)-\bar{n}_AK)+\frac{C_{a,a}}{K}N^K_a(t)\leq f_a (1-s_-(\eps)),
\end{equation}
and following Theorem 2 in \cite{champagnat2006microscopic}, we can construct on the same probability space the processes 
$Z^-_\eps$, $N^K$ and $Z^+_\eps$ such that almost surely:
\begin{equation}\label{couplage11}
 Z^-_\eps(t) \leq N_a^K(t) \leq Z^+_\eps(t), \quad \text{for all } t <  T^K_\eps\wedge S^K_\eps ,
\end{equation}
where for $*\in \{-,+\}$, $Z^*_\eps$ is a birth and death process with initial state $1$, and individual birth  and death rates $f_a$ and  $f_a (1-s_*(\eps))$.


\section{Proof of Theorem \ref{main_result} in the strong recombination regime}\label{proofstrong}

In this section, we suppose that Assumptions \ref{assumption_eq}, \ref{defrK} and \ref{condstrong} hold.
We distinguish the three periods of the selective sweep: (i) rare mutants 
and resident population size near its equilibrium value, (ii) quasi-deterministic period governed by the dynamical system \eqref{S1}, and (iii) 
$A$-population extinction. First we prove that at time $T_\eps^K$ proportions of $b_1$ alleles in the populations $A$ and $a$ are close to $z_{Ab_1}/z_A$. 
Once the neutral proportions are the same in the two populations, they do not evolve anymore until the end of the sweep.

\begin{lem}\label{majespP}
 There exist two positive finite constants $c$ and $\eps_0$ such that for $\eps\leq \eps_0$:
\begin{equation*} 
  \underset{K \to \infty}{\limsup} \ \E \Big[\mathbf{1}_{ T_\eps^K\leq  S_\eps^K}\Big\{\Big|P_{A,b_1}^K( T_\eps^K)-\frac{ z_{Ab_1}}{ z_A} \Big|
+\Big|P_{A,b_1}^K( T_\eps^K)-P_{a,b_1}^K( T_\eps^K) \Big| \Big\}\Big]
\leq c\eps.
\end{equation*}
\end{lem}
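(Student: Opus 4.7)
I would estimate the two summands separately using the semi-martingale decomposition of Proposition \ref{mart_prop} applied to both $P_{A,b_1}^K$ and $P_{a,b_1}^K$, together with the identity \eqref{equaldiffprop} which rewrites the recombination drift through $D(t) := P_{A,b_1}^K(t) - P_{a,b_1}^K(t)$. The key mechanism is that strong recombination drives $D$ to zero exponentially fast at a rate proportional to $r_K$; this will simultaneously force $|D(T_\eps^K)|$ to be small and make the recombination-driven drift of $P_{A,b_1}^K$ negligible.

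\textbf{Decay of $D$.} Subtracting the two semi-martingale decompositions and applying \eqref{equaldiffprop} gives
\begin{equation*}
D(t) = D(0) + (M_A - M_a)(t) - r_K f_A f_a \int_0^t \Lambda(s)\, D(s)\, ds,
\end{equation*}
with $\Lambda(s) := N_A N_a(N_A+N_a+2)/[(f_A N_A + f_a N_a)(N_A+1)(N_a+1)]$. On $\{s < T_\eps^K \wedge S_\eps^K\}$ one has $N_a \geq 1$ and $N_A$ of order $\bar{n}_A K$, so $\Lambda(s) \geq \lambda_\ast$ for a positive constant $\lambda_\ast$ depending only on $(f_A,f_a)$. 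Applying It\^o's formula to $D^2$ and using the orthogonality of $M_A$ and $M_a$ (a consequence of \eqref{muAmua0}) yields
\begin{equation*}
\frac{d}{dt}\E\!\left[D^2(t \wedge T_\eps^K \wedge S_\eps^K)\right] \leq -2 r_K f_A f_a \lambda_\ast \, \E\!\left[D^2(t \wedge T_\eps^K \wedge S_\eps^K)\right] + g(t),
\end{equation*}
where, combining \eqref{crocheten1K1} for the $A$-martingale with \eqref{dcrocheta}--\eqref{tildemart} for the $a$-martingale (whose $1/N_a$ blow-up near $s=0$ is compensated by the exponential weight in Lemma \ref{lemmualpha}), one has $g(t) \leq c/K + c\, e^{-\alpha t}$ with $\alpha := S_{aA}/(2(k_0+1))$. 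Gronwall's inequality then produces
\begin{equation*}
\E\!\left[D^2(T_\eps^K \wedge S_\eps^K)\right] \leq e^{-2 r_K f_A f_a \lambda_\ast (T_\eps^K \wedge S_\eps^K)} + \frac{c}{r_K K} + c\, \frac{e^{-\alpha T_\eps^K} + e^{-2 r_K f_A f_a \lambda_\ast T_\eps^K}}{|2 r_K f_A f_a \lambda_\ast - \alpha|}.
\end{equation*}
Since $T_\eps^K$ is of order $\log K / S_{aA}$ and Assumption \ref{condstrong} gives $r_K \log K \to \infty$, each summand tends to $0$ (using also that $r_K K \geq K/\log K \to \infty$). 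Cauchy-Schwarz then yields $\E[\mathbf{1}_{T_\eps^K \leq S_\eps^K} |D(T_\eps^K)|] = o(1)$.

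\textbf{Control of $P_{A,b_1}^K$.} Inserting \eqref{equaldiffprop} into the decomposition of $P_{A,b_1}^K$ gives
\begin{equation*}
P_{A,b_1}^K(T_\eps^K \wedge S_\eps^K) - P_{A,b_1}^K(0) = M_A(T_\eps^K \wedge S_\eps^K) - r_K f_A f_a \int_0^{T_\eps^K \wedge S_\eps^K} \frac{N_A N_a\, D(s)}{(N_A+1)(f_A N_A + f_a N_a)}\, ds.
\end{equation*}
By Assumption \ref{defrK}, $|P_{A,b_1}^K(0) - z_{Ab_1}/z_A| = O(1/K)$. Doob's inequality combined with $\E[\langle M_A \rangle_{T_\eps^K \wedge S_\eps^K}] \leq c \log K / K$ (from \eqref{crocheten1K1} and $T_\eps^K = O(\log K)$) gives $\E[|M_A(T_\eps^K \wedge S_\eps^K)|] = O(\sqrt{\log K/K})$. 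For the integral term, I would bound $N_A/[(N_A+1)(f_A N_A + f_a N_a)] \leq 1/(f_A N_A)$, then use $N_a \leq \eps K$ on $[0, T_\eps^K]$ and $N_A \geq \bar{n}_A K(1-c\eps)$ on $[0, S_\eps^K]$ to majorize the integrand by $c\, r_K \eps\, |D(s)|$; the $L^2$-bound from the previous step, together with Cauchy-Schwarz, then yields $\int_0^{T_\eps^K \wedge S_\eps^K} r_K\, \E[|D(s)|]\, ds \leq c$, so the expected absolute value of the integral is at most $c\eps$. The main technical obstacle is precisely the $L^2$-estimate on $D$, because $\langle M_a \rangle$ has a $1/N_a$ blow-up when $N_a$ is small; this is exactly what the exponential-weighted bound of Lemma \ref{lemmualpha} compensates, closing the Gronwall argument whenever $r_K \log K \to \infty$ even if $r_K \to 0$.
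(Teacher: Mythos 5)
Your strategy is essentially the paper's (the same decomposition of $D=P_{A,b_1}-P_{a,b_1}$ via \eqref{equaldiffprop}, the same use of Lemma \ref{lemmualpha} to tame the $1/N_a$ blow-up of $\langle M_a\rangle$, the same damping mechanism at rate proportional to $r_K$), but your handling of the random time $\tau:=T_\eps^K\wedge S_\eps^K$ contains genuine gaps. First, the differential inequality $\frac{d}{dt}\E[D^2(t\wedge\tau)]\leq -2r_Kf_Af_a\lambda_*\,\E[D^2(t\wedge\tau)]+g(t)$ is not valid: differentiating the stopped expectation produces $\E[\mathbf{1}_{t<\tau}(\cdots)]$, so the negative drift multiplies $\E[\mathbf{1}_{t<\tau}D^2(t)]$ and not $\E[D^2(t\wedge\tau)]$; after $\tau$ the stopped process is frozen and no longer decays, so Gronwall as you invoke it does not apply, and the conclusion you draw is not even well formed (the random variables $T_\eps^K$, $S_\eps^K$ appear on the right-hand side of a bound for a deterministic expectation). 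Second, the estimate $\E[\langle M_A\rangle_{T_\eps^K\wedge S_\eps^K}]\leq c\log K/K$ is unjustified: $T_\eps^K$ is not deterministically $O(\log K)$; on the event that the mutant dies out (probability tending to $1-S_{aA}/f_a>0$) one has $T_\eps^K=\infty$ and $\tau=S_\eps^K$, which is typically of order $e^{VK}$, so \eqref{crocheten1K1} alone gives nothing at that random time. The same issue affects your claim $\E\int_0^{\tau}r_K|D(s)|\,ds\leq c$: with the $L^2$ bound on $D$ the plateau term of order $1/(Kr_K)$ contributes $r_Kt/\sqrt{Kr_K}$, which is small only for $t$ of order $\log K$, so a deterministic time horizon is indispensable there too.

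The missing ingredient, and exactly how the paper closes the argument, is a localization of $T_\eps^K$ at deterministic times: set $t_K^{(\pm)}=(1\pm c_1\eps)\log K/S_{aA}$ and use the coupling \eqref{couplage11} with \eqref{equi_hitting} to obtain \eqref{2slog}, i.e.\ $\P(T_\eps^K\notin[t_K^{(-)},t_K^{(+)}]\,|\,T_\eps^K\leq S_\eps^K)\to 0$; all moment estimates are then evaluated at the deterministic horizon $t_K^{(+)}$ (so terms like $\sqrt{t/K}$ and $\eps t/\sqrt K$ are harmless), while the complementary event costs nothing since the proportions are bounded by $1$. For the decay of $D$ the paper replaces your stopped Gronwall argument by the weighted process $Y(t)=D^2(t)e^{r_Kf_at/2}$: its drift is non-positive before $\tau$ (your lower bound $\Lambda\geq\lambda_*$ is the same as $H\geq f_a/2$ in \eqref{defH}), so $\E[Y(t\wedge\tau)]$ remains bounded as in \eqref{majespY} for all $t$, even after stopping; then on the event $\{t_K^{(-)}\leq T_\eps^K\leq t_K^{(+)}\wedge S_\eps^K\}$ one writes $|D(T_\eps^K)|\leq \sqrt{Y(t_K^{(+)}\wedge\tau)}\,e^{-r_Kf_at_K^{(-)}/4}$, and Assumption \ref{condstrong} makes this factor vanish. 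If you restructure your two steps around this localization and the weighted process, the rest of your computation goes through and coincides with the paper's proof.
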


\begin{proof}First we bound the difference between the neutral proportions in the two populations, $|P_{a,b_1}(t)-P_{A,b_1}(t)| $,
then we bound $|P_{A,b_1}(t)-{z_{Ab_1}}/{z_A}|$.
For sake of simplicity we introduce:
\begin{equation} \label{defG} G(t):=P_{A,b_1}(t)-P_{a,b_1}(t)=\frac{N_{ab_2}(t)N_{Ab_1}(t)-N_{ab_1}(t)N_{Ab_2}(t)}{N_{A}(t)N_{a}(t)}, \quad \forall t \geq 0  , \end{equation}
\begin{equation} \label{defY} Y(t)=G^2(t)e^{r_K f_at/2}, \quad \forall t \geq 0 .\end{equation}
Recalling \eqref{muAmua0} and applying Itô's formula with jumps we get
\begin{multline}\label{exprY}
 Y(t\wedge T_\eps^K\wedge S_\eps^K)=Y(0)+\hat{M}_{t\wedge T_\eps^K\wedge S_\eps^K}
+r_K\int_0^t\mathbf{1}_{s< T_\eps^K\wedge S_\eps^K}\Big(f_a/2-H(s) \Big)Y(s)ds\\
+\int_0^t\mathbf{1}_{s< T_\eps^K\wedge S_\eps^K}e^{r_K f_as/2}ds
\int_{\R_+}\Big[\Big(\mu_A^K(N,s,\theta)\Big)^2+\Big(\mu_a^K(N,s,\theta)\Big)^2\Big]d\theta,
\end{multline}
where $\hat{M}$ is a martingale with zero mean, and $H$ is defined by
\begin{equation} \label{defH}
H(t)=\frac{2f_a f_AN_A(t)N_a(t)}{f_AN_A(t)+f_aN_a(t)}\Big[\frac{1}{N_A(t)+1}+\frac{1}{N_a(t)+1}\Big]\geq \frac{f_a}{2},\quad t < T_\eps^K\wedge S_\eps^K,
\end{equation}
for $\eps$ small enough. In particular the first integral in \eqref{exprY} is non-positive.
Applying Lemma \ref{lemmualpha} we obtain:
\begin{eqnarray}\label{majespY}
 \E[Y(t\wedge T_\eps^K\wedge S_\eps^K)]&\leq & 1+ \frac{2C(A,2\bar{n}_A)}{r_K f_a(\bar{n}_A-2\eps C_{A,a}/C_{A,A})K}
e^{\frac{r_K f_at}{2}} \nonumber \\
&& +\int_0^t (k_0+1)C(a,2\bar{n}_A)\E\Big[ \tilde{M}_{s\wedge T_\eps^K\wedge S_\eps^K}\Big]
e^{ (\frac{r_K f_a}{2}-\frac{S_{aA}}{2(k_0+1)} )s}ds \nonumber \\
& \leq & c\Big( 1+ \frac{1}{Kr_K}e^{\frac{r_K f_at}{2}}+e^{ (\frac{r_Kf_a}{2}-\frac{S_{aA}}{2(k_0+1)} )t}  \Big),
\end{eqnarray}
where $c$ is a finite constant which can be chosen independently of $\eps$ and $K$ if $\eps$ is small enough and $K$ large enough. 
Combining the semi-martingale decomposition \eqref{defM}, the Cauchy-Schwarz Inequality, and Equations \eqref{crocheten1K1} and \eqref{majespY} we get for every $t \geq  0$,
\begin{multline*} \E \Big[\Big|P_{A,b_1}(t\wedge T_\eps^K\wedge S_\eps^K)-\frac{\lfloor z_{Ab_1}K\rfloor}{\lfloor z_A K\rfloor}\Big|\Big]
 \\ \leq  \E \Big[|M_A(t\wedge T_\eps^K\wedge S_\eps^K)|\Big]+
\frac{r_K f_a\eps }{\bar{n}_A-2\eps C_{A,a}/C_{A,A}} \int_0^t \E \Big[\mathbf{1}_{s< T_\eps^K\wedge S_\eps^K}|G(s)|\Big]ds\\
\leq  \E^{1/2} \Big[\langle M_A \rangle_{t\wedge T_\eps^K\wedge S_\eps^K}\Big]
+cr_K\eps  \int_0^t \E^{1/2}\Big[Y(s\wedge T_\eps^K\wedge S_\eps^K)\Big]e^{-r_Kf_as/4}ds
\\
\leq  c\Big(\sqrt{t/K}+\eps r_K \int_0^t \Big(e^{-r_K f_as/2}+\frac{1}{Kr_K}+e^{-S_{aA}s/2(k_0+1)}\Big)^{1/2}ds\Big),
\end{multline*}
where $c$ is finite. A simple integration then yields the existence of a finite $c$ such that:
\begin{eqnarray} \label{majdiffprop}\E \Big[\Big|P_{A,b_1}(t\wedge T_\eps^K\wedge S_\eps^K)-
\frac{\lfloor z_{Ab_1}K\rfloor}{\lfloor z_A K\rfloor}\Big|\Big]
  \leq c\Big(\sqrt{\frac{t}{K}}+\eps\Big(1+  \frac{t}{\sqrt{K}}\Big)\Big).
\end{eqnarray}
Let us introduce the sequences of times
$$t_K^{(-)}=(1-c_1\eps)\frac{\log K}{S_{aA}}, \quad \text{and} \quad t_K^{(+)}=(1+c_1\eps)\frac{\log K}{S_{aA}},$$
where $c_1$ is a finite constant. Then according to Coupling \eqref{couplage11} and limit
\eqref{equi_hitting},
\begin{equation} \label{2slog} \underset{K \to \infty}{\lim} \P(T_\eps^K < t_K^{(-)}|T_\eps^K \leq S_\eps^K)=
 \underset{K \to \infty}{\lim} \P(T_\eps^K > t_K^{(+)}|T_\eps^K \leq S_\eps^K)=0.\end{equation}
Hence applying \eqref{majdiffprop} at time $t_K^{(+)}$
 and using \eqref{res_champ} and \eqref{2slog}, we bound the first term in 
the expectation.
To bound the second term in the expectation, we introduce the notation
$$ A(\eps,K):=\E \Big[\mathbf{1}_{t_K^{(-)} \leq  T_\eps^K\leq  S_\eps^K\wedge t_K^{(+)}}
\Big|P_{A,b_1}^K( T_\eps^K\wedge S_\eps^K)-P_{a,b_1}^K( T_\eps^K\wedge S_\eps^K) \Big|\Big]. $$
From \eqref{2slog} we obtain
\begin{equation} \label{reducbound2}  \underset{K \to \infty}{\limsup} \ \E \Big[\mathbf{1}_{ T_\eps^K\leq  S_\eps^K}
\Big|P_{A,b_1}^K( T_\eps^K)-P_{a,b_1}^K( T_\eps^K) \Big|\Big]=\underset{K \to \infty}{\limsup} \ A(\eps,K), \end{equation}
and by using \eqref{defY}, the Cauchy-Schwarz Inequality, and \eqref{majespY} we get
\begin{eqnarray*}
 A(\eps,K)&\leq  &\E \Big[\sqrt{Y(t_K^{(+)}\wedge T_\eps^K\wedge S_\eps^K)} \Big] e^{-\frac{r_Kf_a}{4}t_K^{(-)}}\\
&\leq  &\E^{1/2} \Big[{Y(t_K^{(+)}\wedge T_\eps^K\wedge S_\eps^K)} \Big] e^{-\frac{r_Kf_a}{4}t_K^{(-)}}\\
&\leq & c\Big( 1+ \frac{1}{Kr_K}e^{r_Kf_at_K^{(+)}/2}+e^{ (\frac{r_K f_a}{2}-\frac{S_{aA}}{2(k_0+1)} )t_K^{(+)}}  \Big)^{1/2}
e^{-\frac{r_Kf_a}{4}t_K^{(-)}}\\
&\leq & c\Big( e^{-\frac{r_Kf_a}{4}t_K^{(-)}}+ \frac{1}{\sqrt{Kr_K}}e^{\frac{c_1\eps r_Kf_a\log K}{2S_{aA}}}+
e^{ (\frac{c_1\eps r_Kf_a\log K}{2S_{aA}}-\frac{S_{aA}}{4(k_0+1)} t_K^{(+)})}  \Big),
\end{eqnarray*}
where the value of the constant $c$ can change from line to line. Assumption \ref{condstrong} then yields
$$ \underset{K \to \infty}{\limsup} \ A(\eps,K)=0, $$
and we end the proof of the second bound by applying \eqref{reducbound2}. 
\end{proof}

The following Lemma states that during the second period, the neutral proportion stays constant in the $a$-population.

\begin{lem}\label{majdet}
There exist two positive finite constants $c$ and $\eps_0$ such that for $\eps\leq \eps_0$:
\begin{equation*} 
 \underset{K \to \infty}{\limsup}
 \ \E\Big[\mathbf{1}_{ T_\eps^K\leq S_\eps^K}\Big|P^K_{a,b_1}(T_\eps^K+t_\eps(\textstyle{\frac{N^K(T_\eps^K)}{K}}))-\frac{z_{Ab_1}}{z_A}\Big|\Big]\leq c\eps.
\end{equation*} 
\end{lem}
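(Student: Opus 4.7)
The plan is to apply the strong Markov property at time $T_\eps^K$ and reduce the statement to the deterministic comparison already established by Lemma \ref{lemapprox}, combined with the explicit integration \eqref{expreproba} obtained in the proof of Lemma \ref{lemstudysd}. Let $Z := N^K(T_\eps^K)/K$. On the event $\{T_\eps^K \leq S_\eps^K\}$, definitions \eqref{compact1} and \eqref{TKTKeps1} force $Z_a = \lfloor \eps K\rfloor/K$ and $Z_A \in [\bar{n}_A - 2\eps C_{A,a}/C_{A,A},\, \bar{n}_A + 2\eps C_{A,a}/C_{A,A}]$, so $Z$ belongs to a compact set $\mathcal{K}_\eps \subset \R_+^{A\times \mathcal{B}}\times (\R_+^{a\times \mathcal{B}}\setminus\{(0,0)\})$ on which $t_\eps$ is bounded by some finite $T^*_\eps$ (by continuous dependence of \eqref{S1} on its initial condition). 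By the strong Markov property, conditionally on $\mathcal{F}_{T_\eps^K}$, the process $(N^K(T_\eps^K + t))_{t \geq 0}$ has the law of $(N^{(Z,K)}(t))_{t \geq 0}$; denote by $(n^{(Z,K)}(t))_{t \geq 0}$ the deterministic trajectory of the four-dimensional system \eqref{syst_dyn} started from $Z$.

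The deterministic estimate is a direct consequence of \eqref{expreproba}. Since $p^{(Z,K)}_{\alpha,b_1}(0) = Z_{\alpha b_1}/Z_\alpha = P^K_{\alpha,b_1}(T_\eps^K)$ for $\alpha \in \mathcal{A}$, that formula gives
\[
p^{(Z,K)}_{a,b_1}(t_\eps(Z)) = (1-F)\, P^K_{a,b_1}(T_\eps^K) + F\, P^K_{A,b_1}(T_\eps^K), \qquad F := F(Z,r_K,t_\eps(Z)) \in [0,1].
\]
Subtracting $z_{Ab_1}/z_A$, taking absolute values, and using the triangle inequality $|P^K_{a,b_1}(T_\eps^K) - z_{Ab_1}/z_A| \leq |P^K_{A,b_1}(T_\eps^K) - z_{Ab_1}/z_A| + |P^K_{A,b_1}(T_\eps^K) - P^K_{a,b_1}(T_\eps^K)|$ yields
\[
\Big|p^{(Z,K)}_{a,b_1}(t_\eps(Z)) - \frac{z_{Ab_1}}{z_A}\Big| \leq 2\Big|P^K_{A,b_1}(T_\eps^K) - \frac{z_{Ab_1}}{z_A}\Big| + \Big|P^K_{A,b_1}(T_\eps^K) - P^K_{a,b_1}(T_\eps^K)\Big|,
\]
whose expectation on $\{T_\eps^K \leq S_\eps^K\}$ is bounded by $c\eps$ thanks to Lemma \ref{majespP}.

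For the stochastic error $|P^K_{a,b_1}(T_\eps^K + t_\eps(Z)) - p^{(Z,K)}_{a,b_1}(t_\eps(Z))|$, I would upgrade Lemma \ref{lemapprox} to a version uniform in $z \in \mathcal{K}_\eps$: since the rescaled rates \eqref{deftildeb}, \eqref{deftilded} are Lipschitz with constants uniform on compact sets and $t_\eps(z) \leq T^*_\eps$ on $\mathcal{K}_\eps$, the same Gronwall-plus-Doob argument used in Lemma \ref{lemapprox} yields
\[
\underset{K \to \infty}{\lim}\ \sup_{z \in \mathcal{K}_\eps}\ \P\Big(\sup_{s \leq T^*_\eps} \|N^{(z,K)}(s)/K - n^{(z,K)}(s)\| \geq \delta\Big) = 0, \qquad \forall \delta > 0.
\]
Conditioning on $\mathcal{F}_{T_\eps^K}$ and applying dominated convergence (the proportions lie in $[0,1]$) converts this into the required expectation bound, and combining with the deterministic estimate finishes the proof.

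The main obstacle is precisely this uniform-on-compacts strengthening of Lemma \ref{lemapprox}: the lemma is stated for a single deterministic $z$, whereas here $Z$ is $\mathcal{F}_{T_\eps^K}$-measurable. The upgrade is routine — either rewrite the Gronwall estimate with Lipschitz constants uniform on $\mathcal{K}_\eps$, or cover $\mathcal{K}_\eps$ with a finite $\eta$-net and invoke continuous dependence of $z \mapsto n^{(z,K)}(\cdot)$ on initial conditions — but it is the only technical ingredient not immediately available from the earlier lemmas.
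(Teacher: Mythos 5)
Your argument is correct in outline, but it follows a genuinely different route from the paper. The paper does not return to the fluid limit after $T_\eps^K$: it writes $P_{a,b_1}$ on $[T_\eps^K, T_\eps^K+t_\eps]$ through the semi-martingale decomposition of Proposition \ref{mart_prop}, bounds the martingale increment by its quadratic variation via \eqref{crocheten1K1} (of order $1/K$ up to the exit time $L_\eps^K$ of a macroscopic box), bounds the drift by $\int |G|$ with $G=P_{A,b_1}-P_{a,b_1}$, controls $G^2$ by It\^o plus the quadratic variations and $|G(T_\eps^K)|$ by Lemma \ref{majespP}, and only invokes the uniform-over-compacts law of large numbers \eqref{result_champa1} (for the two-dimensional system, already available from Champagnat) to show $L_\eps^K>t_\eps$ with high probability; it then concludes since Lemma \ref{majespP} also gives $P_{a,b_1}(T_\eps^K)\approx z_{Ab_1}/z_A$. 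You instead condition at $T_\eps^K$, approximate the four-dimensional process by \eqref{syst_dyn}, and use the explicit formula \eqref{expreproba} together with $F\in[0,1]$ and Lemma \ref{majespP}; your deterministic estimate $|p^{(Z,K)}_{a,b_1}(t_\eps(Z))-z_{Ab_1}/z_A|\leq 2|P_{A,b_1}(T_\eps^K)-z_{Ab_1}/z_A|+|G(T_\eps^K)|$ is correct. What your route buys is conceptual transparency (the second phase is literally the deterministic mixing formula already computed for the soft sweep); what it costs is exactly the ingredient you flag: because the initial condition $Z$ is random, you need Lemma \ref{lemapprox} uniformly over the compact $\mathcal{K}_\eps$, and this uniform version is not stated anywhere for the four-dimensional system with recombination, whereas the paper's martingale bounds are intrinsically uniform in the starting point (the Markov property is applied to expectations of quadratic variations, not to a pathwise fluid limit). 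The upgrade is indeed routine (the Gronwall and Poisson-martingale estimates in the proof of Lemma \ref{lemapprox} depend on $z$ only through the compact), so this is not a fatal gap, but to make your last step complete you should also record two small points the paper handles explicitly: the conversion from $\|N^{(z,K)}/K-n^{(z,K)}\|$ small to $|P^{(z,K)}_{a,b_1}-p^{(z,K)}_{a,b_1}|$ small requires the uniform lower bound $\inf_{z\in\Gamma}\inf_{t\leq t_\eps}n_a^{(z)}(t)=I(\Gamma,\eps)>0$ (and a localization keeping the stochastic path in a fixed compact, as in the stopping time $L_\eps^K$ of \eqref{defL}), and the finiteness of $\sup_{z\in\mathcal{K}_\eps}t_\eps(z)$, which the paper asserts as $t_\eps<\infty$ in \eqref{tetaCfini1} and you justify by continuous dependence on the initial condition.
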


\begin{proof}
Let us introduce, for $z \in \R_+^\mathcal{E}$ and $\eps>0$ the set $\Gamma$ and the time $t_\eps$ defined as follows:
\begin{equation}\label{tetaCfini1}\Gamma:=\Big\{ z \in \R_+^\mathcal{E}, \Big|z_A- \bar{n}_A\Big|\leq 2\eps \frac{C_{A,a}}{C_{A,A}}, \Big|z_a-\eps\Big|\leq \frac{\eps}{2}  \Big\}, \quad  t_\eps:=\sup \{ t_\eps(z),z \in \Gamma\},
\end{equation}
where $t_\eps(z)$ has been defined in \eqref{deftepsz1}. According to Assumption \ref{assumption_eq}, $t_\eps<\infty$, and  
$$ I(\Gamma,\eps):=\underset{z \in \Gamma}{\inf} \ \underset{t \leq t_\eps}{\inf}\ \{n_A^{(z)}(t),n_a^{(z)}(t)\}>0,$$
and we can introduce the stopping time
\begin{equation}\label{defL}
 L_\eps^K(z)=\inf\big\{ t\geq 0, (N_A^{(z,K)}(t),N_a^{(z,K)}(t)) \notin [ I(\Gamma,\eps)K/2,(\bar{n}_A+\bar{n}_a)K]^2 \big\}.
\end{equation}
Finally, we denote by $(\mathcal{F}_t^K, t\geq 0)$ the canonical filtration of $N^K$.
Notice that on the event $\{ T_\eps^K\leq S_\eps^K \}$, $N(T_\eps^K)/K \in \Gamma$, thus $t_\eps(\textstyle{{N(T_\eps^K)}/{K}})\leq t_\eps$. 
The semi-martingale decomposition \eqref{defM} and the definition of $G$ in \eqref{defG} then
twice the Strong Markov property and the Cauchy-Schwarz Inequality yield:
\begin{multline}\label{majdebut}
\E\Big[ \mathbf{1}_{ T_\eps^K\leq  S_\eps^K}\Big|P_{a,b_1}\Big(T_\eps^K+t_\eps(\textstyle{\frac{N(T_\eps^K)}{K}})\wedge
 L_\eps^K(\textstyle{\frac{N(T_\eps^K)}{K}})\Big)-P_{a,b_1}(T_\eps^K)\Big|\Big]
\\ \leq  \E\Big[ \mathbf{1}_{ T_\eps^K\leq S_\eps^K}\E\Big[ \Big|M_a\Big(T_\eps^K+t_\eps(\textstyle{\frac{N(T_\eps^K)}{K}})\wedge 
L_\eps^K(\textstyle{\frac{N(T_\eps^K)}{K}})-M_a(T_\eps^K)\Big| +f_a \displaystyle\int_{T_\eps^K}^{T_\eps^K +t_\eps \wedge 
L_\eps^K(\textstyle{\frac{N(T_\eps^K)}{K}})}  |G|\Big|\mathcal{F}_{T_\eps^K}\Big]\Big]
\\ \leq  \E\Big[ \mathbf{1}_{ T_\eps^K\leq S_\eps^K}\Big\{\E^{1/2}\Big[ \langle M_a\rangle_{T_\eps^K+t_\eps\wedge 
L_\eps^K(\textstyle{\frac{N(T_\eps^K)}{K}})}-\langle M_a\rangle_{T_\eps^K} \Big|\mathcal{F}_{T_\eps^K}\Big]+f_a \sqrt{t_\eps} \E^{1/2}\Big[\int_{T_\eps^K}^{T_\eps^K +t_\eps \wedge 
L_\eps^K(\textstyle{\frac{N(T_\eps^K)}{K}})}  G^2\Big|\mathcal{F}_{T_\eps^K}\Big]\Big\}\Big].
\end{multline}
To bound the first term of the right hand side we use the Strong Markov Property, Equation \eqref{crocheten1K1} and the definition of $L_\eps^K$ in \eqref{defL}. 
We get
\begin{equation}\label{majt1}  \E\Big[ \mathbf{1}_{ T_\eps^K\leq S_\eps^K}\E^{1/2}\Big[ \langle M_a\rangle_{T_\eps^K+t_\eps\wedge 
L_\eps^K(\textstyle{\frac{N(T_\eps^K)}{K}})}-\langle M_a\rangle_{T_\eps^K} \Big|\mathcal{F}_{T_\eps^K}\Big]\Big]\leq \sqrt{ \frac{2t_\eps C(a,\bar{n}_A+\bar{n}_a)}{I(\Gamma,\eps)K}}. \end{equation}
Let us now focus on the second term. Itô's formula with jumps yields for every $t\geq 0$,
$$ \E\Big[G^2\Big(t \wedge 
L_\eps^K(\textstyle{\frac{N(0)}{K}})\Big)\Big]\leq \E[G^2(0)]+\E\Big[\langle M_A\rangle_{t \wedge 
L_\eps^K(\textstyle{\frac{N(0)}{K}})}\Big]+\E\Big[\langle M_a\rangle_{t \wedge 
L_\eps^K(\textstyle{\frac{N(0)}{K}})}\Big], $$
and adding the Strong Markov Property we get
\begin{multline*}
\mathbf{1}_{ T_\eps^K\leq  S_\eps^K} \E^{1/2}\Big[\int_{T_\eps^K}^{T_\eps^K +t_\eps \wedge 
L_\eps^K(\textstyle{\frac{N(T_\eps^K)}{K}})}  G^2\Big|\mathcal{F}_{T_\eps^K}\Big] \\
\leq \underset{z \in \Gamma}{\sup}\ \E^{1/2}\Big[\int_{0}^{t_\eps} \Big( G^2\Big(s \wedge 
L_\eps^K(\textstyle{\frac{N(0)}{K}})\Big)-G^2(0)\Big)ds\Big|N(0)=\lfloor z K \rfloor\Big] + \mathbf{1}_{ T_\eps^K\leq  S_\eps^K}\sqrt{t_\eps}|G(T_\eps^K)| \\
\leq \underset{z \in \Gamma}{\sup}\Big[\int_{0}^{t_\eps}  \E\Big[\langle M_A\rangle_{s \wedge 
L_\eps^K(\textstyle{\frac{N(0)}{K}})}+\langle M_a\rangle_{s \wedge 
L_\eps^K(\textstyle{\frac{N(0)}{K}})}\Big]\Big|N(0)=\lfloor z K \rfloor\Big]ds \Big]^{1/2}+ \mathbf{1}_{ T_\eps^K\leq  S_\eps^K}\sqrt{t_\eps}|G(T_\eps^K)|.
\end{multline*}
Using again Equation \eqref{crocheten1K1} and the definition of $L_\eps^K$ in \eqref{defL}, and adding Lemma \ref{majespP} finally lead to
\begin{equation}\label{majt2} \E\Big[ \mathbf{1}_{ T_\eps^K\leq S_\eps^K} \E^{1/2}\Big[\int_{T_\eps^K}^{T_\eps^K +t_\eps \wedge 
L_\eps^K(\textstyle{\frac{N(T_\eps^K)}{K}})}  G^2\Big|\mathcal{F}_{T_\eps^K}\Big]\Big]
\leq c\Big(\frac{1}{\sqrt{K}}+\eps\Big),\end{equation}
for $\eps$ small enough and $K$ large enough, where $c$ is a finite constant.
Moreover \eqref{result_champa1} ensures that
$$ \P\Big({ T_\eps^K\leq  S_\eps^K}, L_\eps^K(\textstyle{\frac{N(T_\eps^K)}{K}})\leq t_\eps(\textstyle{\frac{N(T_\eps^K)}{K}})\Big)
\leq \P\Big(\frac{N(T_\eps^K)}{K} \in \Theta,L_\eps^K(\textstyle{\frac{N(T_\eps^K)}{K}})\leq t_\eps(\textstyle{\frac{N(T_\eps^K)}{K}})\Big)\underset{K \to \infty}{\to}0,
 $$
where $\Theta$ has been defined in \eqref{defTheta}.
Adding Equations \eqref{majdebut}, \eqref{majt1}, \eqref{majt2} and Lemma \ref{majespP}, we finally end the proof of Lemma \ref{majdet}.
\end{proof}

\begin{proof}[Proof of Theorem \ref{main_result} in the strong recombination regime]
 Let us focus on the A-population extinction period. We have thanks to the Strong Markov Property:
\begin{multline}
  \P \Big( \mathbf{1}_{N(T_\eps^K+t_\eps(N(T_\eps^K)/K))\in \Theta}\Big|P_{a,b_1}(T_{\text{ext}}^K)-
P_{a,b_1}(T_\eps^K+t_\eps(\textstyle{\frac{N(T_\eps^K)}{K}}))\Big|>\sqrt{\eps}\Big)\\
\leq \underset{z \in \Theta}{\sup}\ \P \Big(|P_{a,b_1}(T_{\text{ext}}^K)-P_{a,b_1}(0)|>\sqrt{\eps}\Big| N(0) =\lfloor zK \rfloor\Big).
\end{multline}
But Equation 
\eqref{result_champa1} yields $\P(N(T_\eps^K+t_\eps(N(T_\eps^K)/K))/K \in \Theta|N(T_\eps^K)/K\in \Gamma)\to_{K\to \infty}1 $, 
and $ \{ T_\eps^K\leq S_\eps^K\} \subset  \{N(T_\eps^K)/K\in \Gamma\} $. Adding Equation \eqref{gdeprobabfix} and Lemmas \ref{third_step} and \ref{majdet}, 
the triangle inequality allows us to conclude 
that for $\eps$ small enough
 \begin{equation*}
 \limsup_{K \to \infty} \hspace{.1cm}\P\Big(\Big|P_{a,b_1}^K(T^K_{\text{ext}})-\frac{z_{Ab_1}}{z_A}\Big|> \sqrt{\eps}\Big| \text{Fix}^K\Big)
\leq c\eps.
 \end{equation*} 
As $\P(\text{Fix}^K)\to_{K \to \infty} S_{aA}/f_a>0$, it is equivalent to the claim of Theorem \ref{main_result} in the strong regime.
\end{proof}

\section{Proof of Theorem \ref{main_result} in the weak recombination regime}\label{proofweak}

\subsection{Coupling with a four dimensional population process and structure of the proof}
In this section we suppose that Assumptions \ref{assumption_eq}, \ref{defrK} and \ref{condweak} hold. 
To lighten the proofs of Sections \ref{coal_reco} to \ref{bedescended} we introduce a coupling of the population process $N$ with a process $\tilde{N}=(\tilde{N}_{\alpha\beta},(\alpha,\beta) \in \mathcal{E})$ defined as follows for every $t \geq 0$:
\begin{eqnarray}
 \tilde{N}(t) &=& \mathbf{1}_{t < S_\eps^K}N(t)+ \mathbf{1}_{t \geq  S_\eps^K} \Big( e_{Ab_1}N_{Ab_1}((S_\eps^K)^-)+ e_{Ab_2}N_{Ab_2}((S_\eps^K)^-) \\
&&+ 
\int_0^t\int_{R_+} \Big\{ e_{a b_1}\mathbf{1}_{\theta\leq b^K_{a b_1}(\tilde{N}({s^-}))}+e_{a b_2}\mathbf{1}_{b^K_{a b_1}(\tilde{N}({s^-}))<\theta\leq f_a \tilde{N}_a({s^-})}\nonumber \\
&&\hspace{1cm}\hspace{1cm}-e_{a b_1}\mathbf{1}_{0<\theta-f_a \tilde{N}_a(s^-)\leq d_{a b_1}^K(\tilde{N}({s^-}))}\nonumber \\
&&\hspace{1cm}\hspace{1cm}-e_{a b_2}\mathbf{1}_{d^K_{a b_1}(\tilde{N}({s^-}))<\theta-f_a \tilde{N}_a(s^-)\leq d_a^K(\tilde{N}({s^-}))}\Big\} Q(ds,d\theta)
\Big),
\end{eqnarray}
where the Poisson random measure $Q$ has been introduced in \eqref{defN}.
From \eqref{tildeTbiggerT} we know that 
\begin{equation}\label{couplage2}\underset{K \to \infty}{\limsup} \ \P(\{\exists t \leq T_{\eps}^K, N(t) \neq \tilde{N}(t)\}, T_{\eps}^K<\infty)\leq c \eps.\end{equation}
Hence we will study the process $\tilde{N}$ and deduce from this study properties of the dynamics of the process $N$ during the first phase.
Moreover, as
we want to prove convergences on the fixation event $\text{Fix}^K$, defined in \eqref{defText}, inequalities 
 \eqref{gdeprobabfix} and \eqref{couplage2} allow us, to study the dynamics of $\tilde{N}$ during the first phase, to restrict our attention 
 to the conditional probability measure:
 \begin{equation}\label{defhatP}
  \hat{\P}(.)=\P(.|\tilde{T}^K_{\eps} <\infty),
 \end{equation}
where $\tilde{T}^K_{\eps} $ is the hitting time of $\lfloor \eps K \rfloor$ by the process $\tilde{N}_a$:
\begin{equation} \label{TKTKeps2} \tilde{T}^K_\eps := \inf \Big\{ t \geq 0, \tilde{N}^K_a(t)= \lfloor \eps K \rfloor \Big\}. \end{equation}
Expectations and variances associated 
 with this probability measure are denoted by $\hat{\E}$ and $\hat{\var}$ respectively.

Let us notice that, as by definition $\tilde{N}_A(t)\in I_\eps^K $ for all $t \geq 0$,
 Coupling \eqref{couplage11} with birth and death processes $Z^-_\eps$ and $Z^+_\eps$ holds up to time $\tilde{T}_\eps^K$ for the process $\tilde{N}$:
\begin{equation}\label{couplage12}
 Z^-_\eps(t) \leq \tilde{N}_a^K(t) \leq Z^+_\eps(t), \quad \text{for all } t < \tilde{T}^K_\eps.
\end{equation}\\

The sketch of the proof is the following. We first focus on the neutral proportion in the $a$ population 
at time $\tilde{T}_\eps^K$. The idea is to consider the neutral alleles of the $a$ individuals at time $\tilde{T}_\eps^K$ and follow their ancestral lines back until 
the beginning of the sweep, to know whether they are descended from the first mutant or not.
Two kinds of events can happen to a neutral lineage: coalescences and m-recombinations 
(see Section \ref{coal_reco}); 
we show that we can neglect the coalescences and the occurrence of several m-recombinations for a lineage during the first period. Therefore, 
our approximation of the genealogy is the following: two neutral lineages are independent, and each of them undergoes one recombination with an 
$A$-individual during the first period with probability $\rho_K$. If it has undergone a recombination with an $A$-individual, 
it can be an allele $b_1$ or $b_2$. Otherwise it is descended from the first mutant and is an allele $b_1$.
To get this approximation we follow the approach presented by Schweinsberg 
and Durrett in \cite{schweinsberg2005random}. 
In this paper, the authors described the population dynamics by a variation of Moran model with two loci and recombinations. In their model, 
the population size was 
constant and each individual has a constant selective advantage, $0$ or $s$. In our model the size is varying and each 
individual's ability to survive and give birth depends on the population state. After the study of the first period we check that the second and third periods have little influence on the neutral 
proportion in the $a$-population.

\subsection{Coalescence and m-recombination times} \label{coal_reco}

Let us introduce the jump times of the stopped Markov process $(\tilde{N}^K(t),t\leq \tilde{T}_\eps^K)$, 
$0=:\tau_0^K<\tau_1^K<...< \tau_{J^K}^K:=\tilde{T}_\eps^K$, where $J^K$ denotes the jump number of $\tilde{N}^K$ between $0$ and $\tilde{T}_\eps^K$, and the time of the 
$m$-th jump is:
\begin{equation*}\label{deftpssauts}
 \tau_m^K= \inf \Big\{ t> \tau_{m-1}^K, \tilde{N}^K(t)\neq \tilde{N}^K(\tau_{m-1}^K) \Big\},\hspace{.1cm} 1\leq  m \leq J^K.
\end{equation*}

Let us sample two individuals with the $a$ allele uniformly at random at time $\tilde{T}_\eps^K$ and denote by $\beta_p$ and $\beta_q$ their neutral alleles.
 We want to follow their genealogy backward in time and know at each time between $0$ and $\tilde{T}_\eps^K$ the types ($A$ or $a$) of the individuals
carrying $\beta_p$ and $\beta_q$.

We say that $\beta_p$ and $\beta_q$ coalesce at time $\tau_m^K$ if they are carried by two different individuals at time $\tau_{m}^K$ and by the same 
individual at time $\tau_{m-1}^K$. In other words the individual carrying the allele $\beta_p$ (or $\beta_q$) at time $\tau_{m}^K$ is a newborn and 
has inherited his neutral allele from the individual carrying allele $\beta_q$ (or $\beta_p$) at time $\tau_{m-1}^K$.
The jump number at the coalescence time is denoted by
\begin{equation*}\label{tpscoal}
 TC^K(\beta_p,\beta_q):=\left\{\begin{array}{ll}\sup\{ m\leq J^K, \beta_p \text{ and } \beta_q \text{ coalesce at time } \tau_m^K \},& \text{ if $\beta_p$ and $\beta_q$ coalesce }\\
-\infty,& \text{ otherwise}.  \end{array}\right.
\end{equation*}

We say that $\beta_p$ m-recombines at time $\tau_m^K$ if the individual carrying the allele $\beta_p$ at time $\tau^K_m$ is a newborn,
 carries the allele $\alpha \in \mathcal{A}$, and has inherited his allele $\beta_p$ from an individual 
carrying allele $\bar{\alpha}$. In other words, a m-recombination is a recombination which modifies the selected allele connected to the neutral 
allele.
The jump numbers of the first and second (backward in time) m-recombinations are denoted by:
\begin{equation*}\label{tpsreco}
TR^K_1(\beta_p):=\left\{\begin{array}{ll}
                  \sup\{ m\leq J^K, \beta_p \text{ m-recombines at time } \tau_m^K\} ,& \text{if there is at least one m-recombination}\\
-\infty, &\text{otherwise},
  \end{array}\right.
\end{equation*}
\begin{equation*}\label{tpsreco}
TR^K_2(\beta_p):=\left\{\begin{array}{ll}
                  \sup\{ m<TR^K_1(\beta_p), \beta_p \text{ m-recombines at time } \tau_m^K\} ,& \text{if there are at least two}\\
& \text{m-recombinations}\\
-\infty,& \text{otherwise}.
  \end{array}\right.
\end{equation*}

Let us now focus on the probability for a coalescence to occur conditionally on the state of the process $(\tilde{N}_A,\tilde{N}_a)$ at two successive 
jump times. We denote by 
$p_{\alpha_1\alpha_2}^{c_K}(n)$ the probability that the genealogies of two uniformly sampled neutral alleles associated respectively with alleles $\alpha_1$ and 
$\alpha_2\in \mathcal{A}$ 
at time $\tau_m^K$ coalesce at this time conditionally on $(\tilde{N}_A^K(\tau_{m-1}^K),\tilde{N}_a^K(\tau_{m-1}^K))=n\in \N^2$ and on the birth of an individual
carrying allele $\alpha_1 \in \mathcal{A}$ at time $\tau_{m}^K$. Then we have the following result:


\begin{lem}\label{lempcoal}
For every $n=(n_A,n_a) \in \N^2$ and $\alpha \in \mathcal{A}$, we have:
\begin{equation}\label{expr_reco1}  p_{\alpha \alpha}^{c_K}(n)=\frac{2}{n_\alpha(n_\alpha+1)}\Big( 1-\frac{r_K f_{\bar{\alpha}} n_{\bar{\alpha}}}
{f_An_A+f_an_a} \Big)\quad \text{and} \quad  p_{\alpha \bar{\alpha}}^{c_K}(n) = \frac{r_Kf_{\bar{\alpha}}}{(n_\alpha+1)(f_An_A+f_an_a)}. 
 \end{equation}
 \end{lem}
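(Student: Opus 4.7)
The plan is to decompose the coalescence event into the joint event ``one of the two sampled neutral alleles is carried by the newborn born at time $\tau_m^K$, and the other is carried by the individual at time $\tau_{m-1}^K$ from which the newborn inherited its neutral allele''. Call this latter individual the \emph{neutral parent}. Conditionally on the state $n$ at $\tau_{m-1}^K$ and on the birth of an $\alpha$-individual at $\tau_m^K$, it then suffices to combine (i) the probability that the sampled pair contains the newborn and (ii) the conditional law of the neutral parent.

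For (ii), I decompose each birth event according to its ordered (parent $P$, mate $M$) pair: $P$ produces a gamete at rate $f_{\text{type}(P)}$, $M$ is drawn among the gametes with density $f_{\text{type}(M)}/(f_A n_A + f_a n_a)$, and the newborn is a clone of $P$, a clone of $M$, or one of the two recombinant types with respective probabilities $(1-r_K)/2$, $(1-r_K)/2$, $r_K/2$, $r_K/2$. The configurations with $P,M\in\alpha$ contribute $f_\alpha^2 n_\alpha^2/(f_A n_A+f_a n_a)$ to the rate of the event ``newborn of type $\alpha$ with neutral parent of type $\alpha$'' (all four outcomes qualify), while the two configurations with exactly one of $P,M$ in $\bar\alpha$ contribute together $f_\alpha n_\alpha f_{\bar\alpha} n_{\bar\alpha}(1-r_K)/(f_A n_A + f_a n_a)$ (only the clone-of-the-$\alpha$-progenitor outcome qualifies). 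Summing and using $f_\alpha n_\alpha+f_{\bar\alpha} n_{\bar\alpha}=f_A n_A+f_a n_a$, this rate collapses to $f_\alpha n_\alpha[1-r_K f_{\bar\alpha} n_{\bar\alpha}/(f_A n_A+f_a n_a)]$. Dividing by the total $\alpha$-birth rate $f_\alpha n_\alpha$ (recorded in \eqref{defdaba}) produces the conditional probability that the neutral parent is of type $\alpha$, and the complement gives the $\bar\alpha$ case. Exchangeability among individuals of the same type then distributes the neutral parent uniformly over the corresponding subpopulation.

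For (i), after the $\alpha$-birth there are $n_\alpha+1$ type-$\alpha$ individuals. In the case $\alpha_1=\alpha_2=\alpha$, the probability that a uniform pair of type-$\alpha$ individuals contains the newborn is $2/(n_\alpha+1)$, and conditionally the other sample is uniform on the $n_\alpha$ type-$\alpha$ individuals alive at $\tau_{m-1}^K$; combined with the probability $\tfrac{1}{n_\alpha}[1-r_K f_{\bar\alpha} n_{\bar\alpha}/(f_A n_A+f_a n_a)]$ that such a specific individual is the neutral parent, this yields the first formula. In the case $\alpha_1=\alpha\neq\alpha_2=\bar\alpha$, coalescence forces the type-$\alpha$ sample to be the newborn (probability $1/(n_\alpha+1)$) and the type-$\bar\alpha$ sample to coincide with the neutral parent, which happens with probability $r_K f_{\bar\alpha}/(f_A n_A+f_a n_a)$ by step (ii); this yields the second formula.

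The main care needed lies in the bookkeeping of step (ii): one must enumerate (parent, mate, outcome) configurations without double counting (in particular, distinguishing the ``parent'' and ``mate'' roles of an ordered pair) and identify the cancellations. As a sanity check I would verify independently that the rate of ``newborn $\alpha$ with neutral parent $\bar\alpha$'' equals $f_\alpha n_\alpha r_K f_{\bar\alpha} n_{\bar\alpha}/(f_A n_A+f_a n_a)$, confirming that the two conditional probabilities on the type of the neutral parent sum to one, and check consistency with the identity $\sum_\beta b^K_{\alpha\beta}(n)=f_\alpha n_\alpha$ recorded in \eqref{defdaba}.
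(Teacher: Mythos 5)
Your proposal is correct and follows essentially the same route as the paper's proof: condition on the $\alpha$-birth at $\tau_m^K$, observe that coalescence forces the sampled pair to be exactly \{newborn, neutral parent\}, compute the probability $1-r_Kf_{\bar{\alpha}}n_{\bar{\alpha}}/(f_An_A+f_an_a)$ (resp.\ $r_Kf_{\bar{\alpha}}n_{\bar{\alpha}}/(f_An_A+f_an_a)$) that the neutral parent is of type $\alpha$ (resp.\ $\bar{\alpha}$), and multiply by the uniform-sampling factor $2/(n_\alpha(n_\alpha+1))$ (resp.\ $1/((n_\alpha+1)n_{\bar{\alpha}})$ together with uniformity of the neutral parent within its type). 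The only difference is that you derive the no-m-recombination probability from an explicit (parent, mate, outcome) rate enumeration, which the paper asserts directly since it is implicit in the birth-rate computation \eqref{birth_rate}.
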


\begin{proof}
We only state the expression of $p_{\alpha \alpha}^{c_K}(n)$, as the calculations are similar for $p_{\alpha \bar{\alpha}}^{c_K}(n)$. If there is a m-recombination, we cannot have the coalescence of two neutral alleles associated with allele $\alpha$ at time 
$\tau_m^K$. With probability $1-r_K f_{\bar{\alpha}}n_{\bar{\alpha}}/(f_An_A+f_an_a)$ there is no 
m-recombination and the parent giving its neutral allele carries the allele ${\alpha}$. When there is no m-recombination, two individuals
 among those who carry allele
 $\alpha$ also carry a neutral allele which was in the same individual at time $\tau_{m-1}^K$. We have a probability ${2}/{n_\alpha(n_\alpha+1)}$ 
to pick this couple of individuals 
among the $(n_\alpha+1)$ $\alpha$-individuals.\end{proof}

\begin{rem}
A m-recombination for a neutral allele associated with an $\alpha$ allele is a coalescence with an $\bar{\alpha}$
 individual. Thus if we denote by $p_\alpha^{r_K}(n)$  the probability that an 
$\alpha$-individual, chosen uniformly at time $\tau_{m}^K$, is the newborn and underwent a m-recombination at his birth, conditionally on 
$(\tilde{N}_A^K(\tau_{m-1}^K),\tilde{N}_a^K(\tau_{m-1}^K))=n \in \N^2$ and on the birth of an individual $\alpha$ at time $\tau_{m}^K$ we get
\begin{equation}\label{expr_reco2}
 p_\alpha^{r_K}(n)=n_{\bar{\alpha}}p_{\alpha \bar{\alpha}}^{c_K}(n) = \frac{n_{\bar{\alpha}}r_Kf_{\bar{\alpha}}}{(n_\alpha+1)(f_An_A+f_an_a)}.
\end{equation}
Moreover, if we recall the definition of $I_\eps^K$ in \eqref{compact1}, we notice that there exists a finite constant $c$ such that for 
$k < \lfloor \eps K \rfloor$,
\begin{equation} \label{rqpr1}
 (1-c\eps)\frac{r_K}{k+1} \leq \inf_{n_A \in I_\eps^K}\hspace{.2cm}  p_a^{r_K}(n_A,k) \leq \sup_{n_A \in I_\eps^K}\hspace{.2cm}  p_a^{r_K}(n_A,k)\leq \frac{r_K}{k+1}.
\end{equation}
\end{rem}

\subsection{Jumps of mutant population during the first period} \label{jumps}

We want to count  the number of coalescences and m-recombinations in the lineages of the two uniformly sampled neutral alleles 
$\beta_p$ and $\beta_q$. By definition, 
these events can only occur at a birth time. Thus we need to study the upcrossing number of the process $\tilde{N}_a^K$ before 
 $\tilde{T}_\varepsilon^K$ 
(Lemma \ref{uphold}). It allows us to prove that the probability 
that a lineage is affected by two m-recombinations or that two lineages coalesce, and then (backward in time) are
 affected by a m-recombination is negligible (Lemma \ref{deuxreco}). 
 Then we obtain an approximation of the probability that a lineage is affected by a
 m-recombination (Lemma \ref{lemvareta}), and finally we check that two lineages are approximately independent (Equation \eqref{dpdce}).
The last step consists in controlling the neutral proportion in the population $A$ (Lemma \ref{lemmajpro}). Indeed it will give us the probability 
that a neutral allele which has undergone a m-recombination is a $b_1$ or a $b_2$.\\

Let us denote by  $\zeta_k^K$ the jump number of last visit to $k$ before the hitting of $\lfloor \eps K \rfloor$, 
\begin{equation}\label{zeta} \zeta_k^K: = \sup \{m \leq J^K, \tilde{N}_a^K(\tau_m^K)=k \}, \quad 1\leq k \leq \lfloor  \varepsilon K \rfloor. \end{equation}
This allows us to introduce for $0< j\leq k<\lfloor  \varepsilon K \rfloor$ the number of upcrossings from $k$ to $k+1$  for the 
process $\tilde{N}_a^K$ before and after the last visit to $j$:
\begin{equation} \label{U1} U_{j,k}^{(K,1)}:=\# \{m  \in \{0,...,\zeta_j^K-1\}, (\tilde{N}_a^K(\tau_m^K),\tilde{N}_a^K(\tau_{m+1}^K))=(k,k+1) \},\end{equation}
\begin{equation} \label{U2} U_{j,k}^{(K,2)}:=\# \{m \in \{\zeta_j^K,...,J^K-1\}, (\tilde{N}_a^K(\tau_m^K),\tilde{N}_a^K(\tau_{m+1}^K))=(k,k+1) \}.\end{equation}
We also introduce the number of jumps of the $A$-population size when there are $k$ $a$-individuals and 
the total number of upcrossings from $k$ to $k+1$ before $\tilde{T}_\eps^K$:
\begin{equation} \label{H1} H_{k}^K:=\# \{m < J^K, \tilde{N}_a^K(\tau_m^K)=\tilde{N}_a^K(\tau_{m+1}^K)=k \},\end{equation}
\begin{equation} \label{Uk} U_{k}^K:=U_{j,k}^{(K,1)}+U_{j,k}^{(K,2)}=\# \{m < J^K, (\tilde{N}_a^K(\tau_m^K),\tilde{N}_a^K(\tau_{m+1}^K))=(k,k+1) \}.\end{equation}
The next Lemma states moment properties of these jump numbers. 
Recall Definition (\ref{def_s_-s_+1}). Then if we define 
\begin{equation}\label{deflambda}  \lambda_\eps:=\frac{(1-s_-(\eps))^3}{(1-s_+(\eps))^{2}}, \end{equation}
which belongs to $(0,1)$ for $\eps$ small enough, we have

\begin{lem}\label{uphold}
There exist two positive and finite constants $\eps_0$ and $c$ such that for $\eps\leq \eps_0$, $K$ large enough 
and $1\leq j \leq k< \lfloor  \varepsilon K \rfloor$,
\begin{equation} \label{E''U}  \hat{\E}[H_{j}^{K}]\leq \frac{12f_A\bar{n}_AK}{s^4_-(\eps)f_aj} ,
 \quad  \hat{\E} [ (U_{j,k}^{(K,1)})^2 ]\leq  \frac{4 \lambda_\eps^{k-j}}{ s^7_-(\eps)(1-s_+(\eps))},
\end{equation}
\begin{equation} \label{majcov}  \hat{\E}[(U_{j}^{K})^2]\leq \frac{2}{s^2_-(\eps)} ,\quad \Big|\hat{\cov}(U_{j,k}^{(K,2)},U_{j}^{K})\Big|\leq c(\eps+(1-s_-( \eps))^{k-j}),\end{equation}
and
\begin{equation}\label{espnoreco}
 r_K\Big|  \sum_{k=1}^{\lfloor \eps K \rfloor -1} \frac{\hat{\E}[U_k^K]}{k+1}-\frac{f_a\log K}{S_{aA}} \Big|\leq c\eps.
\end{equation}

\end{lem}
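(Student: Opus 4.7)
The overall plan is to reduce each quantity to an explicit estimate for the simpler birth-death processes $Z^\pm_\eps$ of \eqref{couplage12}, whose individual birth and death rates are the constants $f_a$ and $f_a(1-s_\pm(\eps))$. On $\{\tilde{T}_\eps^K<\infty\}$ the coupling sandwiches $\tilde{N}_a^K$ between $Z^-_\eps$ and $Z^+_\eps$, so that $U_k^K$, $U_{j,k}^{(K,1)}$ and the time spent at any level $k$ are all controlled by analogous quantities for $Z^+_\eps$ or $Z^-_\eps$, conditioned on hitting $\lfloor \eps K \rfloor$. For a supercritical birth-death chain with constant rates, this hitting event has probability uniformly bounded below by $s_-(\eps)$ in $K$, and its embedded jump chain is a biased random walk on $\N$ for which standard gambler's-ruin and geometric-visit arguments yield explicit moment formulas.

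For the bound on $\hat{\E}[H_j^K]$, I would write $H_j^K$ as the integral, over the sojourn of $\tilde{N}_a^K$ at level $j$, of the rate of $A$-population jumps. Whenever $\tilde{N}_a^K=j$ and $\tilde{N}_A^K\in I_\eps^K$, Equations \eqref{birth_rate}--\eqref{death_rate} show that this cumulative rate is at most $c\,f_A\bar{n}_A K$. Conditionally on the visit structure, the total time spent at $j$ is a sum of $V_j$ exponential sojourns of rate at least $c\,f_a j$, where $V_j$ is the total number of visits of $\tilde{N}_a^K$ to $j$. By the geometric visit distribution of the supercritical biased random walk conditioned on reaching $\lfloor \eps K \rfloor$, $\hat{\E}[V_j]$ is bounded by a constant depending only on $s_-(\eps)$, and multiplying the three factors produces the $O(K/j)$ scaling with the stated explicit constant.

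The bound on $\hat{\E}[(U_j^K)^2]$ is the simplest: under the coupling, $U_j^K$ is dominated by the total number of upcrossings from $j$ to $j+1$ of $Z^+_\eps$ before hitting $\lfloor \eps K \rfloor$, and this has a geometric tail with parameter at least $s_-(\eps)$. For $\hat{\E}[(U_{j,k}^{(K,1)})^2]$ the key observation is that every upcrossing counted by $U_{j,k}^{(K,1)}$ is part of an excursion of $\tilde{N}_a^K$ above $j$ that reaches $k+1$ \emph{and} subsequently returns to $j$; for the biased random walk the probability of such a long-and-returning excursion decays geometrically in $k-j$ by two applications of gambler's ruin (probability to reach $k+1$ from $j$, then probability to return to $j$ from $k+1$), and the exponent $\lambda_\eps$ of \eqref{deflambda} is precisely what one obtains after replacing $Z^+_\eps$ and $Z^-_\eps$ where appropriate to separately upper-bound hitting and return probabilities. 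For the covariance, decompose $U_j^K=U_{j,j}^{(K,1)}+U_{j,j}^{(K,2)}$: the strong Markov property at the last visit to $j$ (conditionally on $\tilde{N}(\tau_{\zeta_j^K}^K)$) makes $U_{j,k}^{(K,2)}$ and $U_{j,j}^{(K,1)}$ independent, while $|\hat{\cov}(U_{j,j}^{(K,2)},U_{j,k}^{(K,2)})|$ is bounded by the probability that the terminal excursion from $j$ reaches $k+1$, contributing the $(1-s_-(\eps))^{k-j}$ factor; the additive $\eps$ accounts for the bias between $Z^+_\eps$ and $Z^-_\eps$.

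Finally, for \eqref{espnoreco}, the explicit first-moment formula for a supercritical birth-death process with constant ratio $1-s_\pm(\eps)$, conditioned on reaching $\lfloor \eps K \rfloor$ from state $1$, gives $\hat{\E}[U_k^K]=1/s_\pm(\eps)+o_K(1)$ uniformly in $k$ away from the boundary, so that $\sum_{k=1}^{\lfloor \eps K \rfloor-1}\hat{\E}[U_k^K]/(k+1)=(\log K)/s_\pm(\eps)+O(1)$. Since $s_\pm(\eps)=S_{aA}/f_a+O(\eps)$ by \eqref{def_s_-s_+1} and $r_K\log K$ is bounded under Assumption \ref{condweak}, the claimed $O(\eps)$ bound follows after multiplication by $r_K$. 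I expect the covariance estimate to be the main obstacle, because the dependence between $U_{j,k}^{(K,2)}$ and $U_j^K$ must be decomposed cleanly across the random time $\tau_{\zeta_j^K}^K$; a robust way to do this is by time-reversal of the biased random walk around its maximum, which turns the excursions above $j$ before and after the final visit into nearly i.i.d.\ pieces up to the $\eps$ error from the spread between $Z^-_\eps$ and $Z^+_\eps$.
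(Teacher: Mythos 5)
Most of your plan coincides with the paper's actual proof: the coupling \eqref{couplage12} with $Z^\pm_\eps$, the gambler's-ruin formula \eqref{hitting_times1}, the ``number of visits $\times$ geometric number of $A$-jumps per visit'' bound for $\hat{\E}[H_j^K]$, the excursion-above-$j$ decomposition with a two-sided ruin estimate producing the factor $\lambda_\eps^{k-j}$ for $\hat{\E}[(U_{j,k}^{(K,1)})^2]$, and the explicit summation of $\hat{\E}[U_k^K]/(k+1)$ against $r_K$ for \eqref{espnoreco} are all exactly the paper's steps (with the caveat, handled there by ratios of hitting probabilities taken uniformly over $n_A\in I_\eps^K$, that every one of these estimates must be carried out under the conditioned measure $\hat{\P}$, for which a two-sided $h$-transform-type control is needed rather than only the crude lower bound $\P(\tilde{T}_\eps^K<\infty)\geq s_-(\eps)$).

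The genuine gap is in the covariance bound of \eqref{majcov}, which you yourself flag as the main obstacle but for which your two proposed mechanisms do not work. First, $\tau^K_{\zeta_j^K}$ is a last-visit time, not a stopping time, so the strong Markov property cannot be invoked there; a last-exit decomposition is available only for the Markov pair $(\tilde{N}_A,\tilde{N}_a)$, and then the pre- and post-parts are independent only \emph{conditionally on} $\tilde{N}_A(\tau^K_{\zeta_j^K})$, whose correlation with $U_{j,j}^{(K,1)}$ is precisely the quantity one must control — the paper explicitly notes that $\tilde{N}_a$ alone is not Markovian and that the Schweinsberg--Durrett symmetry/Markov argument at the last visit is unavailable. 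Second, your decomposition is partly vacuous: on $\{\tilde{T}_\eps^K<\infty\}$ one has $U_{j,j}^{(K,2)}\equiv 1$, so the term you describe as $\hat{\cov}(U_{j,j}^{(K,2)},U_{j,k}^{(K,2)})$ vanishes and cannot be the source of the $(1-s_-(\eps))^{k-j}$ factor, while ``the terminal excursion from $j$ reaches $k+1$'' has conditional probability one. Third, time-reversal applies to an honest biased random walk, whereas here the sandwich by $Z^\pm_\eps$ controls monotone marginal functionals but not joint laws, so ``nearly i.i.d.\ pieces up to an $\eps$ error'' is exactly the unproved step. The paper's route is different and quantitative: it shows that, uniformly in $i$ and in $n_A\in I_\eps^K$, the conditional law of $U_{j,k}^{(K,2)}$ given $\{U_j^K=i\}$ and its unconditional law are both squeezed between geometric laws with parameters $q^{(s_-(\eps),s_+(\eps))}_{j,k}$ and $q^{(s_+(\eps),s_-(\eps))}_{j,k}$ of \eqref{defqkl} (obtained as ratios of hitting probabilities via the coupling), and then the elementary calculus Lemma \ref{ineqqs} converts the spread between these parameters into the bound $c\eps+(1-s_-(\eps))^{k-j}$ on $|\hat{\E}[U_{j,k}^{(K,2)}\mid U_j^K=i]-\hat{\E}[U_{j,k}^{(K,2)}]|$, after which summation against $i\,\hat{\P}(U_j^K=i)$ and the bound $\hat{\E}[(U_j^K)^2]\leq 2/s_-^2(\eps)$ give \eqref{majcov}. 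Without some substitute for this uniform-in-$i$ two-sided geometric domination, your covariance estimate does not go through.
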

This Lemma is widely used in Sections \ref{recocoal} and \ref{bedescended}. Indeed, we shall decompose on the possible states of the population when a 
birth occurs, and apply Equations \eqref{expr_reco1} and \eqref{expr_reco2} 
to express the probability of coalescences and m-recombinations at each birth event. The proof of Lemma \ref{uphold} is quite technical and is 
postponed to Appendix \ref{prooflemma}.

\subsection{Negligible events}\label{recocoal}

The next Lemma bounds the probability that two m-recombinations occur in a neutral lineage and the probability that a couple 
of neutral lineages coalesce and then m-recombine when we consider the genealogy backward in time.
\begin{lem}\label{deuxreco}
There exist two positive finite constants $c$ and $\eps_0$ such that for $K \in \N$ and $\eps\leq \eps_0$,
  \begin{equation*}\label{prop24}
 \hat{\P}\Big(TR_2^K(\beta_p)\neq -\infty\Big)\leq \frac{c}{\log K}, \quad \text{and} \quad \hat{\P}\Big(0 \leq TR^K_1(\beta_p) 
\leq TC^K(\beta_p,\beta_q)\Big)\leq \frac{c}{\log K}. \end{equation*}
\end{lem}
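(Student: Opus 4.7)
My plan is to bound each of the two probabilities by a constant multiple of $r_K$, which is $O(1/\log K)$ by Assumption \ref{condweak}.

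For the first bound, going backward from $\tilde T^K_\eps$ the lineage $\beta_p$ starts in the $a$-population and alternates between $a$- and $A$-phases at successive m-recombinations, so on $\{TR_2^K(\beta_p)\neq-\infty\}$ the second backward m-recombination $TR_2^K$ necessarily occurs at an $A$-birth event. Hence it suffices to bound the expected number $\hat{\E}[N_A^{\mathrm{rec}}]$ of m-recombinations of $\beta_p$ at $A$-birth events on $[0,\tilde T^K_\eps]$. Using \eqref{expr_reco2} together with the lower bound on $n_A$ inside $I_\eps^K$, each $A$-birth with state $(n_A,j)$ contributes at most $p_A^{r_K}(n)\leq c\, j\, r_K/K^2$, giving
\begin{equation*}
\hat{\E}[N_A^{\mathrm{rec}}]\leq \frac{c r_K}{K^2}\,\hat{\E}\!\left[\int_0^{\tilde T^K_\eps} f_A\tilde N_A(t)\,\tilde N_a(t)\,dt\right]\leq \frac{c' r_K}{K}\,\hat{\E}\!\left[\int_0^{\tilde T^K_\eps}\tilde N_a(t)\,dt\right].
\end{equation*}
A standard martingale identity applied to $\tilde N_a$ (whose net drift on $[0,\tilde T^K_\eps]$ is bounded below by $f_a s_-(\eps)\tilde N_a$) yields $\hat{\E}[\int_0^{\tilde T^K_\eps}\tilde N_a(t)\,dt]\leq (\eps K-1)/(f_a s_-(\eps))=O(K)$, so $\hat{\E}[N_A^{\mathrm{rec}}]\leq c r_K$, and Markov's inequality gives the first claim.

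For the second bound, both $TR_1^K$ and $TC^K$ must occur at $a$-birth events, because backward from $\tilde T^K_\eps$ the two lineages remain in $a$-phase until they coalesce at $TC^K$ (itself an $a$-birth), after which the merged lineage stays in $a$-phase until its first m-recombination at $TR_1^K$. Using conditional independence given the trajectory of $\tilde N$, together with \eqref{rqpr1} and \eqref{expr_reco1},
\begin{equation*}
\hat{\P}(0\leq TR_1^K\leq TC^K)\leq 2 r_K \sum_{k_1,k_2}\frac{\hat{\E}[\mathcal{N}_{k_1,k_2}]}{(k_1+1)k_2(k_2+1)},
\end{equation*}
where $\mathcal{N}_{k_1,k_2}$ counts the ordered pairs $(e_1,e_2)$ of $a$-birth events with $e_1<e_2$ at respective levels $k_1$ and $k_2$. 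I split the double sum by $k_1\leq k_2$ and $k_1>k_2$. In the first regime $\mathcal{N}_{k_1,k_2}\leq U_{k_1}^K U_{k_2}^K$, Cauchy-Schwarz and \eqref{majcov} give $\hat{\E}[\mathcal{N}_{k_1,k_2}]\leq c$, and the remaining sum telescopes to $\sum_{k_2}\log(k_2)/k_2^2=O(1)$; in the second regime, the ordering $e_1<e_2\leq\zeta_{k_2}^K$ forces $e_1$ to be an upcrossing at level $k_1$ preceding the last visit to $k_2$, so $\mathcal{N}_{k_1,k_2}\leq U_{k_2}^K\,U_{k_2,k_1}^{(K,1)}$, and Cauchy-Schwarz together with the geometric bound \eqref{E''U} gives $\hat{\E}[\mathcal{N}_{k_1,k_2}]\leq c\lambda_\eps^{(k_1-k_2)/2}$, summing geometrically to $O(1)$.

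The main technical obstacle I anticipate is precisely the $k_1>k_2$ case of the second estimate: a direct Cauchy-Schwarz on $\hat{\E}[U_{k_1}^K U_{k_2}^K]$ only yields a bound of order $r_K\log K=O(1)$, missing the advertised rate by a full factor of $\log K$. The key refinement is to recognise that $e_1$ is counted in $U_{k_2,k_1}^{(K,1)}$ and exploit the geometric decay in $k_1-k_2$ provided by the excursion estimate \eqref{E''U}; this is essential for the claimed $1/\log K$ rate.
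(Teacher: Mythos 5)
Your treatment of the configuration where both lineages sit in the $a$ background is sound and essentially the paper's own argument (the same split according to whether the recombination level lies below or above the coalescence level, with the geometric decay of $\hat{\E}[(U_{k_2,k_1}^{(K,1)})^2]$ from \eqref{E''U} doing the work in the second case), but there is a genuine gap at the very start of your second estimate. The event $\{0\le TR_1^K(\beta_p)\le TC^K(\beta_p,\beta_q)\}$ forces the lineage of $\beta_p$ to be in the $a$ background at the coalescence time, but it puts no constraint at all on the lineage of $\beta_q$: going backward from $\tilde T_\eps^K$, $\beta_q$ may m-recombine before the coalescence is reached, so that at time $\tau^K_{TC^K}$ it is carried by an $A$-individual. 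In that configuration the coalescence need not be an $a$-birth between two $a$-lineages: it can occur at an $A$-birth (probability governed by $p_{Aa}^{c_K}$ in Lemma \ref{lempcoal}), or at an $a$-birth between an $a$- and an $A$-lineage (probability $p_{aA}^{c_K}$); the latter includes the coincidence $TR_1^K=TC^K$, which your strict ordering $e_1<e_2$ silently discards even though it belongs to the event. Hence your sum $2r_K\sum_{k_1,k_2}\hat{\E}[\mathcal{N}_{k_1,k_2}]/((k_1+1)k_2(k_2+1))$ does not dominate the probability you want. The paper devotes a separate step to precisely this case, proving that $\hat{\P}\big(TC^K\neq-\infty,\ \text{a lineage is in the } A \text{ background at } TC^K\big)=O(\log K/K)$ by combining the bounds on $p_{AA}^{c_K},p_{Aa}^{c_K},p_{aA}^{c_K}$ with $\hat{\E}[H_k^K]\le cK/k$ and $\hat{\E}[U_k^K]\le c$ from Lemma \ref{uphold}; you need this supplementary (easy but not free) estimate before your pair-counting applies.

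A lesser issue concerns your first bound. The identification of $TR_2^K$ with an m-recombination occurring at an $A$-birth is correct and is exactly the paper's starting point, but your quantitative step treats $\hat{\P}$ as if it were the unconditioned law: under $\hat{\P}=\P(\cdot\mid\tilde T_\eps^K<\infty)$ the compensator of the $A$-birth counting process is no longer $f_A\tilde N_A\,dt$, and the drift/optional-stopping identity you invoke for $\hat{\E}[\int_0^{\tilde T_\eps^K}\tilde N_a]$ is a statement about the $\P$-dynamics, under which moreover $\tilde T_\eps^K$ may be infinite (one must stop at $\tilde T_\eps^K\wedge T_0$, $T_0$ the extinction time of $\tilde N_a$). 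This is repairable in a line — either write $\hat{\E}[X]\le \E[X]/\P(\tilde T_\eps^K<\infty)$ and bound $\P(\tilde T_\eps^K<\infty)\ge s_-(\eps)$ via Coupling \eqref{couplage12} and \eqref{hitting_times1}, or follow the paper and bound directly $\sum_k\sup_{n_A\in I_\eps^K}p_A^{r_K}(n_A,k)\hat{\E}[H_k^K]\le c\eps r_K$ — but as written the step is not justified. With these two repairs your argument does yield the stated $O(r_K)=O(1/\log K)$ bounds, along the same lines as the paper.
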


\begin{proof}
By definition, the neutral allele $\beta_p$ is associated with an allele $a$ at time $\tilde{T}_\eps^K$. If there are at least two m-recombinations it implies 
that there exists a time between $0$ and $\tilde{T}_\eps^K$ at which $\beta_p$ has undergone 
a m-recombination when it was associated with an allele $A$. 
We shall work conditionally on the stopped process $((\tilde{N}_A(\tau_m^K),\tilde{N}_a(\tau_m^K)), m \leq J^K)$ and decompose according 
to the $a$-population size when this m-recombination occurs. We get the inclusion:
$$ \{ TR_2^K(\beta_p)\neq -\infty \} \subset \underset{k=1}{\overset{\lfloor \eps K\rfloor-1}{\bigcup}}\underset{m=1}{\overset{J^K}{\bigcup}} 
\Big\{ TR_2^K(\beta_p)=m, \tilde{N}_a(\tau^K_{m-1})=\tilde{N}_a(\tau^K_{m})=k \Big\}.  $$
We recall the definition of 
$I_\eps^K$ in \eqref{compact1}. Thanks to Equations (\ref{expr_reco2}) and 
\eqref{E''U}, we get:
\begin{eqnarray*}
 \hat{\P}(TR_2^K(\beta_p)\neq -\infty)\leq  \underset{k=1}{\overset{\lfloor \varepsilon K \rfloor-1}{\sum}}
\sup_{n_A\in I_\eps^K} p_A^{r_K}(n_A,k)\hat{\E}[H_{k}^{K}]\leq  \frac{12r_K\bar{n}_A\eps}{s^4_-(\eps)(\bar{n}_A-2\eps C_{A,a}/C_{A,A})^2} .
\end{eqnarray*}
Assumption \ref{condweak} on weak recombination completes the proof of the first inequality in Lemma \ref{deuxreco}.
The proof of the second one is divided in two steps, presented after introducing, for $(\alpha,\alpha') \in \mathcal{A}^2, m \leq J^K$ the notations
$$ (\alpha \beta_p)_m:=\{ \text{the neutral allele }\beta_p \text{ is associated with the allele } \alpha \text{ at time } \tau_m^K \}, $$
$$ (\alpha \beta_p,\alpha' \beta_q)_m:=(\alpha \beta_p)_m \cap (\alpha' \beta_q)_m .$$

\noindent \textit{First step:} We show that the probability that $\beta_p$ is associated with an allele $A$ at the coalescence time is negligible. We first recall the inclusion,
$$ \{ TC^K(\beta_p,\beta_q)\neq -\infty, (A\beta_p)_{TC^K(\beta_p,\beta_q)} \} \subset \underset{k=1}{\overset{\lfloor \eps K\rfloor-1}{\bigcup}}\underset{m=1}{\overset{J^K}{\bigcup}} 
\Big\{ TC^K(\beta_p,\beta_q)=m, \tilde{N}_a(\tau^K_{m-1})=k, (A\beta_p)_m \Big\} , $$
and decompose on the possible selected alleles associated with $\beta_q$ and on the type of the newborn at 
the coalescence time. 
Using Lemma \ref{lempcoal}, 
Equations \eqref{E''U} and \eqref{majcov}, and
 $r_K\leq 1$, we get
\begin{multline}
 \hat{\P}(TC^K(\beta_p,\beta_q)\neq -\infty, (A\beta_p)_{TC^K(\beta_p,\beta_q)}) \\\leq
\underset{k=1}{\overset{\lfloor \varepsilon K \rfloor-1}{\sum}}  
\Big[\sup_{n_A \in I_\eps^K} p_{AA}^{c_K}(n_A,k) + \sup_{n_A \in I_\eps^K} p_{Aa}^{c_K}(n_A,k)\Big]\hat{\E}[H_{k}^{K}]
+\sup_{n_A \in I_\eps^K} p_{aA}^{c_K}(n_A,k)\hat{\E}[U_{k}^{K}]
\leq  \frac{c}{K}\underset{k=1}{\overset{\lfloor \varepsilon K \rfloor-1}{\sum}}  \frac{1}{k},
\end{multline}
for a finite $c$, which is of order $\log K/K$.\\

\noindent \textit{Second step:} Then, we focus on the case where 
$\beta_p$ and $\beta_q$ are associated with an allele $a$ at the coalescence time.
The inclusion
\begin{eqnarray*}
 \Big\{\tilde{N}_a\Big(\tau_{TC^K(\beta_p,\beta_q)-1}^K\Big)=k, (a\beta_p, a\beta_q)_{TC^K(\beta_p,\beta_q)}\Big\}\subset  \underset{m=1}{\overset{J^K}{\bigcup}} 
\Big\{ TC^K(\beta_p,\beta_q)=m, \tilde{N}_a(\tau^K_{m-1})=k, (a\beta_p,a\beta_q)_m \Big\} ,
\end{eqnarray*}
and Equations (\ref{expr_reco1}) and \eqref{majcov} yield for every $k \in \{1,...,\lfloor \eps K\rfloor-1\}$:
\begin{equation*}\label{lem42} \hat{\P}\Big(\tilde{N}_a\Big(\tau_{TC^K(\beta_p,\beta_q)-1}^K\Big)=k, (a\beta_p, a\beta_q)_{TC^K(\beta_p,\beta_q)}\Big) \leq 
 \underset{n_A \in I_\eps^K}{\sup}\hspace{.2cm} p_{aa}^{c_K}(n_A,k)\hat{\E}[U_{k}^{K}]\leq \frac{4}{{s^2_-(\eps)}k(k+1)}. \end{equation*}
If $\beta_p$ and $\beta_q$ coalesce then undergo their first m-recombination when we look backward in time, and if the $a$-population has the size $k$
 at the coalescence time, it implies that the m-recombination occurs before the $\zeta^K_k$-th jump when we look forward in time.
 For $k,l < \lfloor \eps K\rfloor$,
\begin{multline*}
\hat{\P}\Big(\tilde{N}_a\Big(\tau^K_{TR_1^K(\beta_p)}\Big)=l,0 \leq TR_1^K(\beta_p) \leq TC^K(\beta_p,\beta_q)\Big| \tilde{N}_a\Big(\tau_{TC^K(\beta_p,\beta_q)-1}^K\Big)=k, (a\beta_p, a\beta_q)_{TC^K(\beta_p,\beta_q)}\Big)\\
 \leq  \sup_{n_A\in I_\eps^K} p^{r_K}_a(n_A,l)\Big( \mathbf{1}_{k>l}\hat{\E}  [U_{l}^{K}]+\mathbf{1}_{k\leq l}\hat{\E}  [U_{k,l}^{(K,1)}] \Big)
 \leq  
\frac{2r_K}{(l+1)s^2_-(\eps)}\Big( \mathbf{1}_{k>l}+ \frac{2\mathbf{1}_{k\leq l}\lambda_\eps^{l-k}}{s_-^5(\eps)(1-s_+(\eps))}
 \Big),
\end{multline*}
where the last inequality is a consequence of \eqref{rqpr1}, \eqref{E''U} and \eqref{majcov}. The two last equations finally yield the existence of a finite $c$ such that for every $K \in \N$:
\begin{equation*}   \hat{\P}(0 \leq TR_1^K(\beta_p) \leq TC^K(\beta_p,\beta_q), (a\beta_p, a\beta_q)_{TC^K(\beta_p,\beta_q)}) 
\leq  cr_K \underset{k,l=1}{\overset{\lfloor \varepsilon K \rfloor}{\sum}} \frac{ \mathbf{1}_{k>l}+ \mathbf{1}_{k\leq l}\lambda_\eps^{l-k}}{k(k+1)(l+1)} \leq cr_K,
\end{equation*}
which completes the proof of Lemma \ref{deuxreco} with Assumption \ref{condweak}. \end{proof}

\subsection{Probability to be descended from the first mutant} \label{bedescended}

We want to estimate the probability for the neutral lineage of $\beta_p$ to undergo no m-recombination. Recall Definition \eqref{defrhoK}:

\begin{lem}\label{lemvareta}
There exist two positive finite constants $c$ and $\eps_0$ such that for $\eps\leq \eps_0$:
$$ \limsup_{K \to \infty}\Big| \hat{\P}(TR^K_1(\beta_p)=-\infty)-(1-\rho_K) \Big|\leq c\eps^{1/2} .$$ 
\end{lem}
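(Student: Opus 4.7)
The plan is to compute $\hat{\P}(TR_1^K(\beta_p) = -\infty)$ by tracing the lineage of $\beta_p$ backward from $\tilde{T}_\eps^K$ and expressing this probability as a product over the $a$-upcrossings of $\tilde{N}_a^K$. Conditionally on the stopped trajectory $\mathcal{F}^K := \sigma(\tilde{N}(\tau_m^K), m \leq J^K)$, the Poisson-measure construction \eqref{defN} renders the random decisions at different jumps independent, so that
\[
\hat{\P}\bigl(TR_1^K(\beta_p) = -\infty \,\big|\, \mathcal{F}^K\bigr) = \prod_{m \,:\, a\text{-up at }\tau_m^K} \bigl(1 - p_a^{r_K}(\tilde{N}(\tau_{m-1}^K))\bigr),
\]
with $p_a^{r_K}$ given by \eqref{expr_reco2}. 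Establishing this factorisation rigorously is the main obstacle: it demands an exchangeability argument showing that, given $\mathcal{F}^K$ and conditioning on the absence of prior m-recombinations, $\beta_p$'s carrier at each intermediate time is uniformly distributed among the $a$-individuals then alive. This is precisely the ancestral-selection style argument developed by Schweinsberg and Durrett \cite{schweinsberg2005random}, here adapted to our density-dependent model.

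I would then use the two-sided estimate \eqref{rqpr1}, which gives $p_a^{r_K}(n_A, k) = (1 + O(\eps)) r_K/(k+1)$ uniformly on $I_\eps^K$, together with $|\log(1-x) + x| \leq x^2$ for $x \in [0, 1/2]$. Setting
\[
T_K := r_K \sum_{k=1}^{\lfloor \eps K \rfloor - 1} \frac{U_k^K}{k+1},
\]
the conditional probability rewrites as $\exp\bigl(-(1 + O(\eps)) T_K + O(r_K^2)\bigr)$, whose $O(r_K^2)$ remainder is $o(1)$ under Assumption \ref{condweak}. It then suffices to compare $\hat{\E}[e^{-T_K}]$ with $e^{-\mu}$, where $\mu := f_a r_K \log K / S_{aA}$, since $1 - \rho_K = e^{-\mu}$.

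Estimate \eqref{espnoreco} gives $|\hat{\E}[T_K] - \mu| \leq c\eps$ directly. For the variance, I decompose
\[
\hat{\var}(T_K) = r_K^2 \sum_{j,k} \frac{\hat{\cov}(U_j^K, U_k^K)}{(j+1)(k+1)}.
\]
The diagonal is controlled via $\hat{\E}[(U_k^K)^2] \leq 2/s_-^2(\eps)$ from \eqref{majcov} and contributes $O(r_K^2 \sum_k 1/(k+1)^2) = O(r_K^2) = o(1)$. For $j < k$, writing $U_k^K = U_{j,k}^{(K,1)} + U_{j,k}^{(K,2)}$, the bound $\hat{\E}[(U_{j,k}^{(K,1)})^2] \leq 4\lambda_\eps^{k-j}/(s_-^7(\eps)(1 - s_+(\eps)))$ from \eqref{E''U} and the covariance bound $|\hat{\cov}(U_{j,k}^{(K,2)}, U_j^K)| \leq c(\eps + (1 - s_-(\eps))^{k-j})$ from \eqref{majcov} split the off-diagonal sum into an $\eps$-part giving $c\eps \cdot r_K^2 (\log K)^2 = O(\eps)$ and a geometric-tail part giving $r_K^2 \cdot O(1) = o(1)$. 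Altogether $\hat{\var}(T_K) = O(\eps) + o(1)$. A double application of $|e^{-x} - e^{-y}| \leq |x - y|$ together with Cauchy--Schwarz then yields
\[
\bigl|\hat{\E}[e^{-T_K}] - e^{-\mu}\bigr| \leq \hat{\E}|T_K - \hat{\E}[T_K]| + |e^{-\hat{\E}[T_K]} - e^{-\mu}| \leq \sqrt{\hat{\var}(T_K)} + c\eps = O(\eps^{1/2}),
\]
matching the claimed rate.
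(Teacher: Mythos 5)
Your proposal is correct and follows essentially the same route as the paper: the conditional no-recombination probability is written as a product over upcrossings of $(1-p_a^{r_K})$, approximated by $e^{-\eta^K}$ (your $T_K$ differs from the paper's $\eta^K$ only by the $(1+O(\eps))$ factor from \eqref{rqpr1}), the mean is handled via \eqref{espnoreco}, and the variance via the same covariance decomposition $U_l^K=U_{k,l}^{(K,1)}+U_{k,l}^{(K,2)}$ with the bounds of Lemma \ref{uphold}, concluding by Cauchy--Schwarz. The only cosmetic differences are that the paper invokes the Poisson-approximation lemma of Schweinsberg and Durrett where you use $|\log(1-x)+x|\leq x^2$ (whose $O(r_K^2)$ remainder, being random, should be bounded in expectation by the same moment estimates), and that the product/exchangeability step you flag as the main obstacle is exactly the structure the paper encodes through its definition of $\rho_m^K$ and \eqref{expr_reco2}.
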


\begin{proof}
We introduce $\rho_m^K$, the conditional probability that the neutral lineage
 of $\beta_p$ m-recombines at time $\tau_m^K$, given $(\tilde{N}_A(\tau^K_n),\tilde{N}_a(\tau^K_n),n\leq J^K)$ and given that  
it has not m-recombined during the time interval $]\tau_m^K,T_\eps^K]$. 
The last condition implies that $\beta_p$ is associated with an allele $a$ at time $\tau_m^K$.
\begin{equation}
 \label{defrhoKm} \rho^K_m:=\mathbf{1}_{\{ \tilde{N}_a^K(\tau_m^K)-\tilde{N}_a^K(\tau_{m-1}^K)=1 \}} p_a^{r_K}(\tilde{N}_A^K(\tau_{m-1}^K),\tilde{N}_a^K(\tau_{m-1}^K)).
\end{equation}
 We also introduce $\eta^K$, the sum of these conditional 
probabilities for $1\leq m \leq J^K$:
\begin{equation*}
 \eta^K:={\sum_{m=1}^{J^K}}\rho^K_m.
\end{equation*}
We want to give a rigourous meaning to the sequence of equivalencies:
$$ \hat{\P}\Big(TR^K_1(\beta_p)=-\infty\Big|(\tilde{N}_A(\tau^K_m),\tilde{N}_a(\tau^K_m))_{m\leq J^K}\Big)=\prod_{m=1}^{J^K}(1-\rho^K_m) \sim \prod_{m=1}^{J^K}e^{-\rho^K_m}\sim e^{-\E[\eta^K]}, $$
when $K$ goes to infinity. Jensen's Inequality, the triangle inequality, and the Mean Value Theorem imply
\begin{multline}\label{ineg_tri} \hat{\E}\Big| \hat{\P}\Big(TR^K_1(\beta_p)=-\infty\Big|(\tilde{N}_A(\tau^K_m),\tilde{N}_a(\tau^K_m))_{m\leq J^K}\Big)
-(1-\rho_K) \Big|
\leq  \\
 \hat{\E}\Big|\hat{\P}\Big(TR^K_1(\beta_p)=-\infty\Big|(\tilde{N}_A(\tau^K_m),\tilde{N}_a(\tau^K_m))_{m\leq J^K}\Big)-e^{-\eta^K} \Big| 
+ \Big|e^{-\hat{\E}\eta^K}-(1-\rho_K) \Big|+ \hat{\E}\Big|\eta^K-\hat{\E}\eta^K \Big|  .\end{multline}
We aim to bound the right hand side of (\ref{ineg_tri}).
The bounding of the first term follows the method developed in Lemma 3.6 in \cite{schweinsberg2005random}. We refer to this proof, and get 
the following Poisson approximation
\begin{eqnarray}\label{bound1} \hat{\E}\Big|\hat{\P}\Big(TR^K_1(\beta_p)=-\infty\Big|(\tilde{N}_A(\tau^K_m),\tilde{N}_a(\tau^K_m))_{m\leq J^K}\Big)-e^{-\eta^K} \Big| &
 \leq &
 \sum_{k=1}^{\lfloor \varepsilon K \rfloor-1}\sup_{n_A \in I_\eps^K} \Big( p_a^{r_K}(n_A,k)\Big)^2\hat{\E}[U_{k}^{K}] \nonumber \\
&\leq &\frac{\pi^2 r_K^2}{3s_-^2(\eps)} ,\end{eqnarray}
where $I_\eps^K$ has been defined in \eqref{compact1} and the last inequality follows from \eqref{rqpr1} and \eqref{majcov}. 
To bound the second term, we need to estimate $\hat{\E}[\eta^K]$. Inequality \eqref{rqpr1} implies
\begin{equation}\label{infpra2}
 (1-c\eps)r_K \sum_{k=1}^{\lfloor \eps K \rfloor -1}\frac{U_k^K}{k+1}\leq \eta^K \leq r_K \sum_{k=1}^{\lfloor \eps K \rfloor -1}\frac{U_k^K}{k+1}.
\end{equation}
Adding \eqref{espnoreco} we get that for $\eps$ small enough,
\begin{equation}\label{secondterm}
\limsup_{K \to \infty} \Big|\exp(-\hat{\E}[\eta^K])-(1-\rho_K)\Big|\leq c\eps.
\end{equation}
The bounding of the last term of (\ref{ineg_tri}) requires a fine study of dependences between upcrossing numbers before and after the last visit 
to a given 
integer by the mutant population size. In
 particular, we widely use Equation (\ref{majcov}). We observe that $\hat{\E}|\eta^K-\hat{\E}\eta^K | \leq (\hat{\var}\hspace{.1cm}\eta^K)^{1/2}$, 
but the variance of $\eta^K$ is quite involved to study and according to Assumption \ref{condweak} and Equations \eqref{infpra2} and \eqref{majcov}, 
\begin{eqnarray}\label{tildeeta1}\Big|\hat{\var}\hspace{.1cm}\eta^K-\hat{\var}\hspace{.1cm}\Big( r_K \sum_{k=1}^{\lfloor \eps K \rfloor -1}\frac{U_k^K}{k+1} \Big)\Big|
  &\leq & c \varepsilon \hat{\E}\Big[\Big( r_K \sum_{k=1}^{\lfloor \eps K \rfloor -1}\frac{U_k^K}{k+1} \Big)^2\Big] \nonumber \\
& \leq &  c\eps r_K^2 \sum_{k,l=1}^{\lfloor \eps K \rfloor-1}\frac{\hat{\E}[(U_k^K)^2]+\hat{\E}[(U_l^K)^2]}{(k+1)(l+1)}
\leq c\eps,\end{eqnarray}
for a finite $c$ and $K$ large enough. Let $k \leq l < \lfloor \eps K\rfloor $,
and recall that by definition, $U_{l}^{K}=U_{k,l}^{(K,1)}+U_{k,l}^{(K,2)}$. Then we have
\begin{eqnarray*}
\Big|\hat{\cov}(U_{k}^{K},U_{l}^{K})\Big| \leq  \Big(\hat{\E}[(U_{k}^{K})^2]\hat{\E}[(U_{k,l}^{(K,1)})^2]\Big)^{1/2}+
\Big|\hat{\cov}(U_{k}^{K},U_{k,l}^{(K,2)})\Big|.
\end{eqnarray*}
Applying Inequalities \eqref{E''U} and \eqref{majcov} and noticing that $(1-s_-(\eps))<\lambda_\eps^{1/2}<1$ 
(recall the definition of $\lambda_\eps$ in \eqref{deflambda}) lead to 
\begin{eqnarray}\label{majcov2}
\Big|\hat{\cov}(U_{k}^{K},U_{l}^{K})\Big| \leq c( \lambda_\eps^{(l-k)/2}+\eps +(1-s_-(\eps))^{l-k}  )\leq 
c( \lambda_\eps^{(l-k)/2}+\eps)
\end{eqnarray}
for a finite $c$ and $\eps$ small enough. We finally get:
\begin{eqnarray}\label{varetatilde}
\hat{\var}\Big( r_K \sum_{k=1}^{\lfloor \eps K \rfloor -1}\frac{U_k^K}{k+1} \Big) & \leq & 2r_K^2  \sum_{k=1}^{\lfloor \varepsilon K \rfloor -1}
\frac{1}{k+1} \sum_{l=k}^{\lfloor \varepsilon K \rfloor -1} \frac{\hat{\cov}(U_{k}^{K},U_{l}^{K})}{l+1} \nonumber \\ 
& \leq &  cr_K^2\sum_{k=1}^{\lfloor \varepsilon K \rfloor -1}\frac{1}{k+1} \sum_{l=k}^{\lfloor \varepsilon K \rfloor -1} \frac{\lambda_\eps^{(l-k)/2}+\eps}{l+1} \leq  cr_K^2 {\eps}\log^2 K,
\end{eqnarray}
where we used \eqref{majcov2} for the second inequality.
Applying Jensen's Inequality to the left hand side of (\ref{ineg_tri}) and adding Equations 
(\ref{bound1}), (\ref{secondterm}), \eqref{tildeeta1} and (\ref{varetatilde}) we obtain
\begin{multline}\label{finL4.1} \hat{\E}\Big| \hat{\P}\Big(TR^K_1(\beta_p)=-\infty\Big|(\tilde{N}_A(\tau^K_m),\tilde{N}_a(\tau^K_m))_{m\leq J^K}\Big)
-(1-\rho_K) \Big|\\
\leq 
\frac{\pi^2 r_K^2}{3s_-^2(\eps)}+ c\eps+c(\eps+r_K^2\eps \log^2 K)^{1/2}. \end{multline}
This completes the proof of Lemma \ref{lemvareta}.
\end{proof}

We finally focus on the dependence between the genealogies of $\beta_p$ and $\beta_q$, 
and to this aim follow \cite{schweinsberg2005random} pp. 1622 to 1624 in the case $J=1$.
We define for $m \leq  J^K$ the random variable
$$ K_m=\mathbf{1}_{\{TR^K_1(\beta_p)\geq m\}}+\mathbf{1}_{\{TR^K_1(\beta_q)\geq m\}},$$
which counts the number of neutral lineages which recombine after the $m$-th jump (forward in time) among the lineages of $\beta_p$ and $\beta_q$.
First we will show that for $d \in\{0,1,2\}$,
\begin{multline}\label{maj1K0} \Big| \hat{\P}(K_0=d)-{d \choose 2}\ \hat{\E} \Big[\hat{\P}(TR^K_1(\beta_p)\geq 0|(\tilde{N}_A(\tau^K_m),\tilde{N}_a(\tau^K_m))_{m\leq J^K})^d \\
(1-\hat{\P}(TR^K_1(\beta_p)\geq 0|(\tilde{N}_A(\tau^K_m),\tilde{N}_a(\tau^K_m))_{m\leq J^K})^{2-d}\Big] \Big|\leq \frac{c}{\log K}, \end{multline}
for $\eps$ small enough and $K$ large enough, where $c$ is a finite constant.
The proof of this inequality can be found in \cite{schweinsberg2005random} pp. 1622-1624 and relies on Equation (\ref{lemme51}). The idea is to couple the process 
$(K_m,0\leq m\leq J^K)$ with a process $(K'_m,0\leq m\leq J^K)$ satisfying for every $m \leq J^K$,
$$ \mathcal{L}\Big( K'_{m-1}-K'_m | (\tilde{N}_A(\tau^K_m),\tilde{N}_a(\tau^K_m))_{m\leq J^K}, (K'_u )_{m\leq u \leq J^K} \Big)=Bin(2-K'_m, \rho^K_m), $$
where $Bin(n,p)$ denotes the binomial distribution with parameters $n$ and $p$, and $\rho_m^K$ has been defined in \eqref{defrhoKm}. This implies
$$ \mathcal{L}\Big( K'_{0} \Big| (\tilde{N}_A(\tau^K_m),\tilde{N}_a(\tau^K_m))_{m\leq J^K}  \Big)=Bin(2, \hat{\P}(TR^K_1(\beta_p)\geq 0|(\tilde{N}_A(\tau^K_m),\tilde{N}_a(\tau^K_m))_{m\leq J^K})), $$
and the coupling yields
$$ \hat{\P}(K'_m\neq K_m \text{ for some } 0\leq m\leq J^K)\leq c/\log K ,$$
for $\eps$ small enough and $K$ large enough, where $c$ is a finite constant. In particular, the weak dependence between two neutral lineages stated in Lemma \ref{deuxreco} is needed in this proof.
We now aim at proving that
\begin{multline} \Big| \hat{\E} [\hat{\P}(TR^K_1(\beta_p)\geq 0|(\tilde{N}_A(\tau^K_m),\tilde{N}_a(\tau^K_m))_{m\leq J^K})^d\\
(1-\hat{\P}(TR^K_1(\beta_p)\geq 0|(\tilde{N}_A(\tau^K_m),\tilde{N}_a(\tau^K_m))_{m\leq J^K})^{2-d}]- 
\rho_K^d(1-\rho_K)^{2-d} \Big|\leq c \eps^{1/2},\end{multline}
where we recall the definition of $\rho_K$ in \eqref{defrhoK}. Equation (\ref{lemme343344}) involves
\begin{multline*} \Big| \hat{\E} [\hat{\P}(TR^K_1(\beta_p)\geq 0|(\tilde{N}_A(\tau^K_m),\tilde{N}_a(\tau^K_m))_{m\leq J^K})^d(1-\hat{\P}(TR^K_1(\beta_p)\geq 0|(\tilde{N}_A(\tau^K_m),\tilde{N}_a(\tau^K_m))_{m\leq J^K}))^{2-d}]\\
- \rho_K^d(1-\rho_K)^{2-d} \Big|
 \leq 2 \hat{\E}|\hat{\P}(TR^K_1(\beta_p)\geq 0|(\tilde{N}_A(\tau^K_m),\tilde{N}_a(\tau^K_m))_{m\leq J^K})-\rho_K|.\end{multline*}
Applying Equation \eqref{finL4.1} and adding \eqref{maj1K0}, we finally get for $d$ in $\{0,1,2\}$,
 \begin{equation} \label{dpdce}
 \limsup_{K \to \infty} \Big| \hat{\P}(\mathbf{1}_{TR^K_1(\beta_p)\geq 0}+\mathbf{1}_{TR^K_1(\beta_p)\geq 0}=d) 
-{d \choose 2}  \rho_K^d(1-\rho_K)^{2-d} \Big|\leq c \eps^{1/2} .
 \end{equation}

\subsection{Neutral proportion at time $T_\eps^K$}

Let us again focus on the population process $N$. By abuse of notation, we still use $(TR_i^K(\beta_p), i \in \{1,2\})$ and 
$TC^K(\beta_p,\beta_q)$ to denote recombination and coalescence times of the neutral genealogies for the process $N$.
According to Lemma \ref{deuxreco}, Equation \eqref{dpdce}, and Coupling \eqref{couplage2},
  \begin{equation*}
 \limsup_{K \to \infty} {\P}\Big(\{TR_2^K(\beta_p)\geq 0\}\cup \{0 \leq TR^K_1(\beta_p) 
\leq TC^K(\beta_p,\beta_q)\}\Big|T_\eps^K<\infty\Big)\leq c\eps, \end{equation*}
and
 \begin{equation*}
 \limsup_{K \to \infty} \Big| {\P}(\mathbf{1}_{TR^K_1(\beta_p)\geq 0}+\mathbf{1}_{TR^K_1(\beta_p)\geq 0}=d|T_\eps^K<\infty) 
-{d \choose 2}  \rho_K^d(1-\rho_K)^{2-d} \Big|\leq c \eps^{1/2} ,
 \end{equation*}
for a finite $c$ and $\eps$ small enough.
Hence, it is enough to distinguish two cases for the randomly chosen neutral allele $\beta_p$: 
either its lineage has 
undergone one m-recombination, or no m-recombination.
 In the second case, $\beta_p$ is a $b_1$. In the first one, the probability that $\beta_p$ is a $b_1$ depends on the neutral 
proportion in the $A$ population at the coalescence time. We now state that this proportion stays nearly constant during the first period.

\begin{lem}\label{lemmajpro}
There exist two positive finite constants $c$ and $\eps_0$ such that for $\eps\leq \eps_0$,
\begin{equation*}
 \underset{K \to \infty}{\limsup}\hspace{.1cm} \P \Big( \underset{t \leq T^K_\eps}{\sup} \Big|P_{A,b_1}^K(t)-\frac{z_{Ab_1}}{z_A}\Big|>\sqrt{\varepsilon}, T^K_\eps<\infty \Big)  \leq c \varepsilon . 
\end{equation*}
\end{lem}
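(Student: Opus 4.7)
The plan is to apply the semi-martingale decomposition of Proposition \ref{mart_prop} to $P_{A,b_1}^K$ and control the martingale and the drift separately on a good event where $T_\eps^K\leq t_K^{(+)}\wedge S_\eps^K$. Here $S_\eps^K$ is as in \eqref{TKTKeps1} and $t_K^{(+)}=(1+c_1\eps)\log K/S_{aA}$ is the deterministic upper bound used in Section \ref{proofstrong}. The discretisation error $|P_{A,b_1}^K(0)-z_{Ab_1}/z_A|$ is of order $1/K$ and is therefore harmless. To replace $T_\eps^K$ by $t_K^{(+)}$, I invoke Equation \eqref{resSepsKz} (already used in Lemma \ref{third_step}) to get $\P(t_K^{(+)}<S_\eps^K)\geq 1-c\eps$, together with Coupling \eqref{couplage11} and Equation \eqref{equi_hitting}, which yield $\P(T_\eps^K>t_K^{(+)},T_\eps^K<\infty)\to 0$. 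On the event $\{T_\eps^K\leq t_K^{(+)}\wedge S_\eps^K\}$ one has $N_A(s)\in I_\eps^K$, so $N_A(s)\geq \bar n_AK/2$ for $\eps$ small, and $N_a(s)\leq \lfloor\eps K\rfloor$ for every $s\leq T_\eps^K$.

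For the martingale part, Doob's Maximal Inequality and the quadratic-variation bound \eqref{crocheten1K1} give
$$\P\Big(\sup_{t\leq T_\eps^K\wedge S_\eps^K\wedge t_K^{(+)}}|M_A(t)|>\sqrt{\eps}/2\Big)\leq \frac{4}{\eps}\,\E\bigl[\langle M_A\rangle_{T_\eps^K\wedge S_\eps^K\wedge t_K^{(+)}}\bigr]\leq \frac{c\, C(A,2\bar n_A)\,t_K^{(+)}}{\eps\bar n_A K},$$
which is of order $\log K/(\eps K)$ and vanishes as $K\to\infty$.

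For the drift term in \eqref{defM}, the crude bound $|N_{ab_1}N_{Ab_2}-N_{Ab_1}N_{ab_2}|\leq N_AN_a$ yields, on the same event,
$$r_Kf_Af_a\int_0^{T_\eps^K\wedge S_\eps^K}\frac{|N_{ab_1}N_{Ab_2}-N_{Ab_1}N_{ab_2}|}{(N_A+1)(f_AN_A+f_aN_a)}\,ds\leq r_Kf_a\int_0^{T_\eps^K\wedge S_\eps^K}\frac{N_a(s)}{N_A(s)}\,ds\leq \frac{2r_Kf_a\eps\,t_K^{(+)}}{\bar n_A},$$
because $N_a/N_A\leq 2\eps/\bar n_A$ throughout. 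Under Assumption \ref{condweak} the quantity $r_K\log K$ is bounded, so this drift is deterministically $O(\eps)$, hence smaller than $\sqrt{\eps}/2$ for $\eps\leq\eps_0$ small enough.

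Combining the two estimates with the probabilistic controls on $\{S_\eps^K\leq t_K^{(+)}\}$ and on $\{T_\eps^K>t_K^{(+)}\}\cap\{T_\eps^K<\infty\}$ yields the claimed bound $c\eps$. The only real point to watch is the bookkeeping across the three exceptional events (resident exit of $I_\eps^K$, slow hitting of $\lfloor\eps K\rfloor$ by the mutants, and martingale fluctuation): each contributes at most $O(\eps)$ on $\{T_\eps^K<\infty\}$ by \eqref{resSepsKz}, \eqref{equi_hitting} and Doob, respectively, so their union still has probability at most $c\eps$. The analytic content is light because the weak recombination regime is tuned precisely so that $r_K\log K$ balances the logarithmic length of the first phase.
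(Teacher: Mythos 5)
Your drift and martingale estimates are exactly the paper's: the drift of $P_{A,b_1}$ is bounded by $r_Kf_a\sup_{s\le T^K_\eps\wedge S^K_\eps}N_a/N_A$ times the length of the first phase, hence by $c\,\eps\, r_K\log K=O(\eps)$ in the weak regime, and Doob's inequality with \eqref{crocheten1K1} gives a martingale contribution of order $\log K/(\eps K)\to 0$. Working on $\{T^K_\eps\le t_K^{(+)}\wedge S^K_\eps\}$ instead of the paper's event $F^-_\eps\cap\{T^K_\eps\le S^K_\eps\}$ is a harmless variant (it is the device of Lemma \ref{majespP} and \eqref{2slog}).

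The gap is in the bookkeeping of the exceptional events. You claim $\P(t_K^{(+)}<S^K_\eps)\ge 1-c\eps$ and cite \eqref{resSepsKz} for it; but \eqref{resSepsKz} concerns the stopping time ${U}^K_\eps(z)$ of \eqref{SKeps} for initial conditions $z\in\Theta$ (the third, extinction phase, with $N_A$ already of order $\eps^2K$ and $N_a$ near $\bar n_aK$), not the exit time $S^K_\eps$ of $I^K_\eps$ under the hard-sweep initial condition of Assumption \ref{defrK}. Worse, the claimed inequality is false: on the fixation event, once $N_a$ reaches $\lfloor\eps K\rfloor$ the $A$-population leaves $I^K_\eps$ within a time of order one, while $t_K^{(+)}-T^K_\eps$ is typically of order $\eps\log K\to\infty$, so $S^K_\eps<t_K^{(+)}$ with probability tending to $\P(\textnormal{Fix}^K)>0$. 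What you actually need (and all you need) is that $\{T^K_\eps<\infty\}\setminus\{T^K_\eps\le S^K_\eps\}$ has probability at most $c\eps$; this is precisely \eqref{tildeTbiggerT}, which the paper derives from Champagnat's estimates \eqref{taillepopfinale} and \eqref{res_champ}, and it is not a consequence of the coupling or of \eqref{equi_hitting}. The same remark applies to your claim $\P(T^K_\eps>t_K^{(+)},T^K_\eps<\infty)\to 0$: the coupling \eqref{couplage11} is only valid up to $T^K_\eps\wedge S^K_\eps$, so the argument via \eqref{equi_hitting} only controls $\{T^K_\eps>t_K^{(+)}\}\cap\{T^K_\eps\le S^K_\eps\}$ (as in \eqref{2slog}), and the remaining piece again requires \eqref{tildeTbiggerT}. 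With that substitution the proof closes and is essentially the paper's argument, which restricts to $F^-_\eps\cap\{T^K_\eps\le S^K_\eps\}$ and controls the discrepancy with $\{T^K_\eps<\infty\}$ by \eqref{taillepopfinale}, \eqref{res_champ} and \eqref{hitting_times1}.
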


Lemma \ref{lemmajpro}, whose proof is postponed to Appendix \ref{prooflemma}, allows us to state the following lemma.

\begin{lem}\label{firstphaseweak}
There exist two positive finite constants $c$ and $\eps_0$ such that for $\eps\leq \eps_0$,
  \begin{equation*}
\limsup_{K \to \infty} \hat{\P}\Big( \Big| P^K_{a,b_2}(T^K_\eps) -  \frac{z_{Ab_2}}{z_A}\rho_K  \Big|
>\eps^{1/6} \Big)\leq c\eps^{1/6} .\end{equation*}
\end{lem}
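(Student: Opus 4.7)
The plan is to apply the second-moment method to the random variable $P_{a,b_2}^K(T_\eps^K)$ under $\hat{\P}$, after first interpreting it as a sampling probability. Throughout, thanks to Coupling \eqref{couplage2} and the fact that $N$ and $\tilde{N}$ coincide up to time $T_\eps^K$ with $\hat{\P}$-probability $1-O(\eps)$, we can freely work with the genealogical information developed for $\tilde{N}$ in Sections \ref{coal_reco}--\ref{bedescended}.

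First I would sample one $a$-individual uniformly at time $T_\eps^K$ and call $\beta_p$ its neutral allele; on the event $\{T_\eps^K<\infty\}$ we have $N_a(T_\eps^K)=\lfloor\eps K\rfloor$, so $\hat{\E}[P_{a,b_2}^K(T_\eps^K)]=\hat{\P}(\beta_p=b_2)$. I decompose on the genealogy of $\beta_p$:
\begin{itemize}
\item If $TR_1^K(\beta_p)=-\infty$, the lineage is descended from the first mutant, so $\beta_p=b_1$ deterministically.
\item If $TR_2^K(\beta_p)\neq-\infty$, by Lemma \ref{deuxreco} this contributes $O(1/\log K)$.
\item If exactly one m-recombination occurs, at time $\tau_{TR_1^K(\beta_p)}^K$, the ancestor of $\beta_p$ just before that m-recombination is an $A$-individual chosen uniformly among $A$-individuals at that time, hence $\beta_p=b_2$ with probability $P_{A,b_2}^K(\tau_{TR_1^K(\beta_p)}^K{}^-)$. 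By Lemma \ref{lemmajpro}, this proportion is within $\sqrt{\eps}$ of $z_{Ab_2}/z_A$ with $\hat{\P}$-probability $\geq 1-c\eps$.
\end{itemize}
Combining with Lemma \ref{lemvareta}, which gives the probability $\rho_K$ of the m-recombination event, yields
$$\bigl|\hat{\E}[P_{a,b_2}^K(T_\eps^K)] - \rho_K\, z_{Ab_2}/z_A\bigr|\leq c\,\eps^{1/2}.$$

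Next I would perform exactly the same decomposition for two independently and uniformly sampled $a$-individuals carrying neutral alleles $\beta_p,\beta_q$, so that $\hat{\E}[P_{a,b_2}^K(T_\eps^K)^2]=\hat{\P}(\beta_p=b_2,\beta_q=b_2)+O(1/K)$. The only new ingredients are the two-lineage inputs: Lemma \ref{deuxreco} rules out the occurrence of a coalescence followed (backwards in time) by an m-recombination, and Equation \eqref{dpdce} gives the joint distribution of $(\mathbf{1}_{TR_1^K(\beta_p)\geq 0},\mathbf{1}_{TR_1^K(\beta_q)\geq 0})$ up to $c\eps^{1/2}$. Moreover, conditionally on both lineages undergoing exactly one m-recombination, Lemma \ref{lemmajpro} controls simultaneously the proportions at both (possibly distinct) m-recombination times. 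These combine to give
$$\bigl|\hat{\E}[P_{a,b_2}^K(T_\eps^K)^2]-(\rho_K\, z_{Ab_2}/z_A)^2\bigr|\leq c\,\eps^{1/2},$$
whence $\hat{\var}(P_{a,b_2}^K(T_\eps^K))\leq c\,\eps^{1/2}$. Chebyshev's inequality then gives
$$\hat{\P}\bigl(|P_{a,b_2}^K(T_\eps^K)-\hat{\E}[P_{a,b_2}^K(T_\eps^K)]|>\eps^{1/6}\bigr)\leq c\,\eps^{1/6},$$
and the triangle inequality absorbs the mean error, which is $O(\eps^{1/2})=o(\eps^{1/6})$.

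The main obstacle will be making rigorous the conditional identity that the m-recombination ancestor is \emph{uniformly distributed} among the $A$-population alive at that (random) jump time, and coupling this with the deterministic bound of Lemma \ref{lemmajpro} uniformly in the m-recombination time. Concretely, this requires conditioning on $(\tilde{N}(\tau_m^K))_{m\leq J^K}$ together with the full label-assignment mechanism, averaging $P_{A,b_2}^K$ at the random m-recombination jump against its deterministic approximation $z_{Ab_2}/z_A$, and absorbing the $O(\sqrt{\eps})$ discrepancy from Lemma \ref{lemmajpro} into the error bounds. The two-point version requires the same argument applied to two lineages, plus the weak-dependence bound \eqref{dpdce}; together they give the quoted control on the second moment.
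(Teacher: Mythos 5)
Your proposal is correct and follows essentially the same route as the paper: decompose the genealogy of uniformly sampled neutral alleles at time $T_\eps^K$ into "no m-recombination" (necessarily $b_1$), "at least two m-recombinations" (negligible by Lemma \ref{deuxreco}), and "exactly one m-recombination" (a $b_2$ with probability controlled by Lemma \ref{lemmajpro}), then use Lemma \ref{lemvareta} for the first moment, the weak dependence \eqref{dpdce} for the second moment, and conclude by Chebyshev's inequality. The only cosmetic difference is that the paper phrases the first-moment step as a sandwich on $N^K_{ab_2}(T_\eps^K)$ summed over all $\lfloor\eps K\rfloor$ lineages rather than via a single sampled lineage, which is equivalent by exchangeability.
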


\begin{proof}
The sequence $(\beta_i, i \leq \lfloor \eps K\rfloor)$ denotes the neutral alleles carried by the $a$-individuals at time $T_\eps^K$ and 
$$ A_2^K(i):= \{\beta_i \text{ has undergone exactly one m-recombination and is an allele } b_2\} .$$
If $\beta_i$ is a $b_2$, either its genealogy has undergone one m-recombination with an individual $Ab_2$, or 
it has undergone more than two m-recombinations. Thus 
\begin{equation*}
0\leq {N}^K_{ab_2}(T^K_\eps)- \sum_{i=1}^{\lfloor \eps K\rfloor} \mathbf{1}_{A_2^K(i)}\leq \sum_{i=1}^{\lfloor \eps K\rfloor} \mathbf{1}_{\{ TR^K_2(\beta_i)\neq -\infty \}}.
\end{equation*}
 Moreover, the probability of $A_2^K(i)$ depends on the neutral proportion in the $A$-population when $\beta_i$ m-recombines. For $i \leq \lfloor \eps K\rfloor$, 
\begin{equation}  \label{expl}\Big| \hat{\P} \Big(A^K_2(i)\Big|TR^K_1(\beta_i)\geq 0,TR^K_2(\beta_i)=-\infty,\sup_{t \leq T^K_\eps}
\Big|{P}^K_{A,b_1}(t)-\frac{z_{Ab_1}}{z_A}\Big|\leq\sqrt{\varepsilon} \Big) -\Big(1-\frac{z_{Ab_1}}{z_A}\Big)\Big| \leq \sqrt{\varepsilon}. \end{equation}
Lemma \ref{lemmajpro} and Equation \eqref{tildeTbiggerT} ensure that $\limsup_{K\to\infty} \hat{\P}({\sup}_{t \leq {T}^K_\eps} 
|{P}^K_{A,b_1}(t)-z_{Ab_1}/z_A|>\sqrt{\varepsilon})\leq c\eps$, and Lemmas \ref{deuxreco} and \ref{lemvareta}, and Coupling \eqref{couplage2} that
 $|\hat{\P}(TR^K_1(\beta_i)\geq 0,TR^K_2(\beta_i)=-\infty) -\rho_K|\leq c\eps$. It yields:
\begin{equation*} \Big| \hat{\P} \Big(TR^K_1(\beta_i)\geq 0,TR^K_2(\beta_i)=-\infty,\sup_{t \leq {T}^K_\eps}
\Big|{P}^K_{A,b_1}(t)-\frac{z_{Ab_1}}{z_A}\Big|\leq\sqrt{\varepsilon} \Big) -\rho_K\Big| \leq c\sqrt{\varepsilon} \end{equation*}
for a finite $c$ and $\eps$ small enough. Adding \eqref{expl} we get:
\begin{equation} \label{espprop} \limsup_{K \to \infty}\Big|\hat{\E}[{P}^K_{a,b_2}({T}^K_\eps)]-  \rho_K
\Big(1-\frac{z_{Ab_1}}{z_A}\Big) \Big|\leq c\sqrt{\eps}. \end{equation}
In the same way, using the weak dependence between lineages stated in \eqref{dpdce} and Coupling \eqref{couplage2}, we prove that 
$ \limsup_{K \to \infty}|\hat{\E}[{P}^K_{a,b_2}(T^K_\eps)^2]- \rho_K^2(1-z_{Ab_1}/z_A)^2 |\leq c\sqrt{\eps}.$ 
This implies, adding (\ref{espprop})
 that $ \limsup_{K \to \infty}\hat{\var}( {P}^K_{a,b_2}(T^K_\eps) ) \leq c \sqrt{\varepsilon}$. We end the proof by using Chebyshev's Inequality.
 \end{proof}

\subsection{Second and third periods}
Thanks to Lemma \ref{third_step} we already know that with high probability the neutral proportion in the $a$-population stays nearly constant 
during the third phase. We will prove that this is also the case during the second phase
This is due to the short duration of 
this period, which does not go to 
infinity with the carrying capacity $K$. 
\begin{lem}\label{lemma77}
There exist two positive finite constants $c$ and $\eps_0$ such that for $\eps\leq \eps_0$,
\begin{equation}
 \limsup_{K \to \infty} \hat{\P}\Big( \Big| P^K_{a,b_1}(T_{\textnormal{ext}}^K)-P^K_{a,b_1}(T_\eps^K) \Big|>\eps^{1/3}  \Big)\leq c\eps^{1/3}.
\end{equation}
\end{lem}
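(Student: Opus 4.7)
The plan is to split the interval $[T_\eps^K, T_{\text{ext}}^K]$ at the intermediate time $T_\eps^K + t_\eps(N^K(T_\eps^K)/K)$, bound the two resulting increments of $P^K_{a,b_1}$ separately, and conclude by the triangle inequality. On the event $\{T_\eps^K \leq S_\eps^K\}$ one has $N^K(T_\eps^K)/K \in \Gamma$, with $\Gamma$ defined in \eqref{tetaCfini1}, so $t_\eps(N^K(T_\eps^K)/K) \leq t_\eps < \infty$ with $t_\eps$ independent of $K$. Since $\P(\tilde{T}_\eps^K < \infty)$ is bounded away from zero as $K \to \infty$ thanks to \eqref{proba_fix} and \eqref{couplage2}, it suffices to control the two corresponding unconditional probabilities.

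For the third-phase increment $|P^K_{a,b_1}(T_{\text{ext}}^K) - P^K_{a,b_1}(T_\eps^K + t_\eps(N^K(T_\eps^K)/K))|$, I would apply \eqref{result_champa1} conditionally on $\mathcal{F}_{T_\eps^K}$ and uniformly over the compact set $\Gamma$ of initial data to show that $N^K(T_\eps^K + t_\eps(N^K(T_\eps^K)/K))/K$ lies in the set $\Theta$ from \eqref{defTheta} with probability $1 - o(1)$. The strong Markov property at this time combined with Lemma \ref{third_step} then bounds this increment in absolute value by $\eps$ except on an event of probability at most $c\eps + o(1)$, hence by $\eps^{1/3}/2$ for $\eps$ small enough, since $2\eps \leq \eps^{1/3}$ when $\eps$ is small.

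For the second-phase increment $|P^K_{a,b_1}(T_\eps^K + t_\eps(N^K(T_\eps^K)/K)) - P^K_{a,b_1}(T_\eps^K)|$, I would use the semi-martingale decomposition \eqref{defM} stopped at the exit time $L_\eps^K(N^K(T_\eps^K)/K)$ defined in \eqref{defL}, inside which $N_a^K \geq I(\Gamma,\eps) K/2$ by construction. By \eqref{crocheten1K1} the martingale bracket over the time window $[T_\eps^K, T_\eps^K + t_\eps]$ is at most $2 C(a, \bar{n}_A + \bar{n}_a) t_\eps / (I(\Gamma,\eps) K)$, so Doob's maximal inequality controls the martingale part uniformly on this window by $O(1/\sqrt{K})$ in probability. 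The drift term in \eqref{defM} is bounded deterministically by $r_K f_a t_\eps$ via the elementary inequality $|N_{Ab_1}N_{ab_2} - N_{ab_1}N_{Ab_2}| \leq N_A N_a$, which is $O(1/\log K)$ under Assumption \ref{condweak}. Together these bounds give that the second-phase increment exceeds $\eps^{1/3}/2$ with vanishing probability, while \eqref{result_champa1} ensures that $L_\eps^K(N^K(T_\eps^K)/K) \geq t_\eps$ with probability tending to one.

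The only genuine difficulty lies in handling the random initial time $T_\eps^K$ throughout these steps. This is done by conditioning on $\mathcal{F}_{T_\eps^K}$ and exploiting the uniformity of the approximation \eqref{result_champa1} over the compact set $\Gamma$, exactly as in the proof of Lemma \ref{majdet}, so that the deterministic dynamical system keeps $(N_A^K/K, N_a^K/K)$ in a compact subset of $(\R_+^*)^2$ throughout the second phase, which legitimises both the lower bound on $N_a$ used in the martingale estimate and the drift bound.
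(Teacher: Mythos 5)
Your proposal is correct and follows essentially the same route as the paper: split at $T_\eps^K+t_\eps(N^K(T_\eps^K)/K)$, control the second-phase increment via the semi-martingale decomposition \eqref{defM} (Doob's inequality with the bracket bound \eqref{crocheten1K1}, deterministic drift bound $r_Kf_at_\eps\to 0$ under Assumption \ref{condweak}, localisation justified by \eqref{result_champa1}), and dispatch the third phase with Lemma \ref{third_step}. The only cosmetic difference is that the paper localises with the stopping time $V_\eps^K$ (deviation of order $\eps^3$ from the dynamical system) rather than your $L_\eps^K$ from \eqref{defL}; both serve the same purpose of keeping $N_a^K$ of order $K$ on the window and are handled identically via \eqref{result_champa1}.
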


\begin{proof}
Let us introduce the stopping time $V_\eps^K$:
$$ V_\eps^K:=\inf \Big\{t \leq t_\eps, {\sup}_{\alpha \in\mathcal{A}}\Big|{N}_\alpha^K(T^K_\eps+t)/K-{n_\alpha}^{({N}(T^K_\eps)/K)}(t)\Big|>\eps^3 \Big\}, $$
where $t_\eps$ has been introduced in \eqref{tetaCfini1}.
Recall that $(\mathcal{F}_t^K, t \geq 0)$ denotes the canonical filtration of $N^K$. 
The Strong Markov Property, Doob's Maximal Inequality and Equation \eqref{crocheten1K1} yield:
\begin{multline*}
 \P\Big( {T_\eps^K \leq S^K_\eps},\sup_{t \leq t_\eps}|{M}_{a}^K(T^K_\eps+t\wedge V_\eps^K)-{M}_{a}^K(T^K_\eps)|>\sqrt{\eps} \Big)\\=  
\E\Big[ \mathbf{1}_{T_\eps^K \leq S^K_\eps}{\P}\Big(\sup_{t \leq t_\eps}|{M}_{a}^K(T^K_\eps+t\wedge V_\eps^K)-{M}_{a}^K(T^K_\eps)|>\sqrt{\eps}|\mathcal{F}_{T_\eps^K}\Big) \Big]\\ \leq 
\frac{1}{\eps}\E\Big[ \mathbf{1}_{T_\eps^K \leq S^K_\eps}\Big(\langle M_a \rangle_{T^K_\eps+t_\eps\wedge V_\eps^K}- \langle M_a \rangle_{T^K_\eps} \Big)\Big]
\leq \frac{t_\eps C(a,\bar{n}_a+\bar{n}_A)}{\eps (I(\Gamma,\eps)-\eps^3)K},
\end{multline*}
for $\eps$ small enough, where $I(\Gamma,\eps)$ has been defined in \eqref{tetaCfini1}. But according to Equation 
(\ref{result_champa1}) with $\delta = \eps^3$, 
$\limsup_{K \to \infty}\P(V_\eps^K< t_\eps| T_\eps^K \leq S^K_\eps)=0$.
 Moreover, Equations \eqref{defM} and \eqref{boundmart} imply for every $t \geq 0$
$$  \sup_{t \leq t_\eps}|{P}_{a,b_1}^K(T^K_\eps+t)-{P}_{a,b_1}^K(T^K_\eps)|\leq \sup_{t \leq t_\eps}|{M}_{a}^K(T^K_\eps+t)-{M}_{a}^K(T^K_\eps)|+r_Kt_\eps f_a .$$
As $r_K$ goes to $0$ under Assumption \ref{condweak}, we finally get:
\begin{equation} \label{lemmeteps} \underset{K \to \infty}{\limsup}\hspace{.1cm}\P \Big( \underset{t \leq t_\eps}{\sup}|{P}_{a,b_1}^K(T^K_\eps+t)-{P}_{a,b_1}^K(T^K_\eps)|>\sqrt{\varepsilon},T_\eps^K \leq
S^K_\eps \Big)=0. \end{equation}
Adding Lemma \ref{third_step} ends the proof of Lemma \ref{lemma77}.
\end{proof}

\subsection{End of the proof of Theorem \ref{main_result} in the weak recombination regime}
Thanks to Lemmas \ref{firstphaseweak} and \ref{lemma77} we get that for $\eps$ small enough,
$$ \limsup_{K \to \infty} \hat{\P}\Big( \Big| P^K_{a,b_2}(T^K_{\text{ext}}) -  \rho_K \frac{z_{Ab_2}}{z_A} \Big|>2\eps^{1/6} \Big)\leq c\eps^{1/6} .
 $$
Moreover, \eqref{gdeprobabfix} ensures that $\liminf_{K\to \infty}\P(T^K_{\eps} \leq  S^K_\eps|\text{Fix}^K)\geq 1-c\eps$, which implies
$$ \limsup_{K \to \infty} {\P}\Big(\mathbf{1}_{\text{Fix}^K} \Big| P^K_{a,b_2}(T^K_{\textnormal{ext}}) 
-  \rho_K \frac{z_{Ab_2}}{z_A} \Big|>2\eps^{1/6} \Big)\leq c\eps^{1/6} .
 $$
This is equivalent to the convergence in probability and ends the proof of Theorem \ref{main_result}.

 \renewcommand\thesection{\Alph{section}}
\setcounter{section}{0}
 \renewcommand{\theequation}{\Alph{section}.\arabic{equation}}

\section{Technical results}\label{known_results}
This section is dedicated to technical results needed in the proofs. 
We first present some results stated in \cite{champagnat2006microscopic}.
We recall Definitions \eqref{deffitinv1}, \eqref{defText}, \eqref{defTheta}, \eqref{SKeps}, \eqref{TKTKeps1} and \eqref{tetaCfini1} and that the notation $.^K$ refers to the processes that satisfy Assumption \ref{defrK}. Proposition \ref{fix_champ} is a direct 
consequence of 
Equations (42), (71), (72) and (74) in \cite{champagnat2006microscopic}:
\begin{pro}\label{fix_champ} 
There exist two posivite finite constants $M_1$ and $\eps_0$ such that for every $\eps\leq \eps_0$ 
\begin{equation}\label{taillepopfinale}
  \underset{K \to \infty}{\lim} \P\Big( \Big|N_a^K(T^K_{\textnormal{ext}})-K\bar{n}_a\Big|>\eps K \Big| \textnormal{Fix}^K \Big)=0,
\quad \text{and} \quad
 \underset{K \to \infty}{\limsup}\ \Big|\P(T^K_\eps<\infty)- \frac{S_{aA}}{f_a}\Big|\leq M_1\eps.
\end{equation}
Moreover there exists $M_2>0$ such that for every $\eps\leq \eps_0$, the probability of the event
\begin{equation}\label{def_FepsK} F_\eps^K=\Big\{T^K_\eps\leq S^K_\eps, N_A^K(T^K_\eps+t_\eps)<\frac{\eps^2 K}{2},
 | N_a^K(T^K_\eps+t_\eps)-\bar{n}_a K|<\frac{\eps K}{2}\Big\} \end{equation}
satisfies
\begin{equation}\label{res_champ}\underset{K \to \infty}{\liminf}\ \P(T^K_\eps\leq S^K_\eps)\geq \underset{K \to \infty}{\liminf}\ \P(F_\eps^K)\geq \frac{S_{aA}}{f_a}-M_2\eps, \end{equation}
and if $z \in \Theta$, then there exist two posivite finite constants $V$ and $c$ such that:
\begin{equation}\label{resSepsKz}
\underset{K \to \infty}{\liminf} \ \P({U}^K_\eps(z)>e^{VK})\geq 1-c\eps.
\end{equation}
\end{pro}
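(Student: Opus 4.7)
The plan is to translate the three-phase description of a selective sweep from \cite{champagnat2006microscopic} into the four claims of the proposition. Phase~1 runs from the initial mutant birth until the hitting time $T_\eps^K$ of level $\lfloor \eps K\rfloor$ by the $a$-population; phase~2 is a quasi-deterministic period of duration $t_\eps$; phase~3 is the extinction of the $A$-population. Equations~(71)--(72) of \cite{champagnat2006microscopic} quantify the survival probability during phase~1, equation~(42) gives the deterministic approximation of phase~2, and equation~(74) controls stability of the monomorphic $a$-equilibrium during phase~3.

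For the second claim of \eqref{taillepopfinale} and for \eqref{res_champ}, I would invoke the coupling \eqref{couplage11} that sandwiches $N_a^K$ between two birth-and-death processes $Z_\eps^\pm$ with individual birth rate $f_a$ and individual death rates $f_a(1-s_\pm(\eps))$ as long as $t<T_\eps^K\wedge S_\eps^K$. The classical survival probability of such a birth-and-death process started from $1$ equals $s_\pm(\eps)$, which converges to $S_{aA}/f_a$ as $\eps\to 0$ with error of order $\eps$; this yields $|\P(T_\eps^K<\infty)-S_{aA}/f_a|\leq M_1\eps$ in the $K\to\infty$ limit. For the sharper bound involving $F_\eps^K$, I would argue that, conditional on $T_\eps^K<\infty$, the $A$-population stays in $I_\eps^K$ during phase~1 with probability $1-O(\eps)$; this follows from the standard exit-time estimate for the stochastic logistic equation perturbed by the $a$-population (whose size is bounded by $\eps K$ on $[0,T_\eps^K]$), as stated in \cite{champagnat2006microscopic}.

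For the first claim of \eqref{taillepopfinale}, I would combine the deterministic approximation \eqref{result_champa1} with a phase-3 analysis. Starting from $N^K(T_\eps^K)/K$ in a compact subset of $\R_+^{A\times \mathcal{B}}\times(\R_+^{a\times\mathcal{B}}\setminus\{0\})$, the uniform convergence \eqref{result_champa1} shows that $N_a^K(T_\eps^K+t_\eps)/K$ lies within $\eps/2$ of the dynamical-system value, itself within $\eps/2$ of $\bar{n}_a$ by the definition of $t_\eps$, which gives \eqref{def_FepsK} and \eqref{res_champ}. In phase~3, which has duration of order $\log K$ and during which $N_A^K$ behaves subcritically by \eqref{decroissance}, the fluctuations of $N_a^K$ around $\bar{n}_a K$ can be shown to stay $o(K)$ by the same martingale estimate that underlies Lemma~\ref{third_step}, so $|N_a^K(T_{\text{ext}}^K)-\bar{n}_a K|=o(K)$ on $\text{Fix}^K$ with probability tending to one.

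Finally, for \eqref{resSepsKz}, the assumption $z\in\Theta$ places the rescaled process close to the stable equilibrium $(0,\bar{n}_a)$ of \eqref{S1}. A Freidlin--Wentzell style large-deviation estimate for the exit time from a neighborhood of a stable equilibrium in density-dependent population processes (this is exactly (74) of \cite{champagnat2006microscopic}) yields an exit time that is exponentially large in $K$; choosing $V$ strictly smaller than the quasi-potential evaluated on the boundary of the set $\{z:z_A\leq \eps, |z_a-\bar{n}_a|\leq M''\eps\}$ suffices. The only point where constants really need tracking is to verify that the $O(\eps)$ errors from the branching coupling in phase~1 and from the deterministic approximation in phase~2 combine additively to yield explicit constants $M_1$ and $M_2$ in \eqref{res_champ}; everything else follows by direct import of the cited estimates.
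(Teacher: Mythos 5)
Your proposal is correct and follows essentially the same route as the paper: the paper gives no independent argument, stating only that the proposition is a direct consequence of Equations (42), (71), (72) and (74) of Champagnat (2006), and your sketch is a faithful reconstruction of how those estimates (birth-and-death coupling for phase~1, deterministic approximation for phase~2, exit-time/large-deviation bound near the $a$-equilibrium for phase~3) combine to give the four claims. The only point worth keeping in mind is that the bound on $\P(T^K_\eps<\infty)$ also needs the resident-stability estimate to rule out $\{S^K_\eps<T^K_\eps<\infty\}$, but you invoke exactly that estimate in the following sentence, so the outline is consistent with the cited source.
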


Thanks to these results we can state the following Lemma, which motivates the coupling of $N$ and $\tilde{N}$ and allows us to focus on the event $\{ \tilde{T}_\eps^K <\infty \}$ rather 
than on $\text{Fix}^K$ in Section \ref{proofweak}.

\begin{lem}
There exist two posivite finite constants $c$ and $\eps_0$ such that for every $\eps\leq \eps_0$ 
 \begin{equation} \label{tildeTbiggerT}
\underset{K \to \infty}{\limsup} \ \P(T_{\eps}^K<\infty,T^K_\eps>S^K_\eps)\leq c \eps,
\end{equation}
and 
\begin{equation}\label{gdeprobabfix}
 \limsup_{K\to \infty}\Big[\P(\{T^K_{\eps} \leq  S^K_\eps\} \setminus \textnormal{Fix}^K)+\P(\textnormal{Fix}^K \setminus \{T^K_{\eps} \leq  S^K_\eps\})\Big]\leq c\eps.
\end{equation}
\end{lem}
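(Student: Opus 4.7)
The plan is to deduce both inequalities from the ingredients collected in Proposition \ref{fix_champ}, using nothing more than set-theoretic manipulations and the strong Markov property.

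For \eqref{tildeTbiggerT}, the key observation is the identity
\[
\P(T_\eps^K<\infty,\, T_\eps^K>S_\eps^K)=\P(T_\eps^K<\infty)-\P(T_\eps^K\leq S_\eps^K),
\]
since $\{T_\eps^K\leq S_\eps^K\}\subset\{T_\eps^K<\infty\}$. Taking the limsup and using the two estimates in Proposition \ref{fix_champ}, namely $\limsup_K \P(T_\eps^K<\infty)\leq S_{aA}/f_a+M_1\eps$ and $\liminf_K \P(T_\eps^K\leq S_\eps^K)\geq S_{aA}/f_a-M_2\eps$ (the latter follows from \eqref{res_champ}), gives $\limsup_K \P(T_\eps^K<\infty,T_\eps^K>S_\eps^K)\leq(M_1+M_2)\eps$.

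For \eqref{gdeprobabfix}, I would route everything through the favourable event $F_\eps^K$ introduced in \eqref{def_FepsK}. First, by the strong Markov property at time $T_\eps^K+t_\eps$, on $F_\eps^K$ the rescaled state $N^K(T_\eps^K+t_\eps)/K$ lies in $\Theta$, and exactly the coupling-with-sub-critical-birth-and-death argument carried out in the proof of Lemma \ref{third_step} (see the derivation of \eqref{LS3}) yields a uniform-in-initial-state bound $\inf_{z\in\Theta}\liminf_K \P_z(T^K_{\mathrm{ext}}<\infty)\geq 1-c\eps$. Combined with \eqref{res_champ}, this gives
\[
\liminf_K \P(F_\eps^K\cap \mathrm{Fix}^K)\geq (1-c\eps)\bigl(S_{aA}/f_a-M_2\eps\bigr).
\]
Since $F_\eps^K\subset\{T_\eps^K\leq S_\eps^K\}$, one has
\[
\P(\{T_\eps^K\leq S_\eps^K\}\setminus\mathrm{Fix}^K)\leq \P(T_\eps^K\leq S_\eps^K)-\P(F_\eps^K\cap\mathrm{Fix}^K),
\]
whose limsup is bounded by $(S_{aA}/f_a+M_1\eps)-(1-c\eps)(S_{aA}/f_a-M_2\eps)=O(\eps)$, using the upper bound $\P(T_\eps^K\leq S_\eps^K)\leq \P(T_\eps^K<\infty)$. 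Symmetrically,
\[
\P(\mathrm{Fix}^K\setminus\{T_\eps^K\leq S_\eps^K\})\leq \P(\mathrm{Fix}^K)-\P(F_\eps^K\cap\mathrm{Fix}^K),
\]
and since $\lim_K \P(\mathrm{Fix}^K)=S_{aA}/f_a$ by \eqref{proba_fix}, the same inequality gives an $O(\eps)$ bound on the limsup.

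The only non-cosmetic step is extracting the bound $\inf_{z\in\Theta}\liminf_K \P_z(T^K_{\mathrm{ext}}<\infty)\geq 1-c\eps$ from the proof of Lemma \ref{third_step}: that proof already establishes the stronger statement $\liminf_K\P_z(T_{\mathrm{ext}}^K<L(\eps,K)<U_\eps^K(z))\geq 1-c\eps$ uniformly in $z\in\Theta$, so no new estimate is required, only a careful extraction of what was proved.
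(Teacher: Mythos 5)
Your proof of \eqref{tildeTbiggerT} is exactly the paper's: the identity $\P(T_\eps^K<\infty,T_\eps^K>S_\eps^K)=\P(T_\eps^K<\infty)-\P(T_\eps^K\leq S_\eps^K)$ (valid because $S_\eps^K<\infty$ a.s.) combined with the second part of \eqref{taillepopfinale} and \eqref{res_champ}.

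For \eqref{gdeprobabfix} you take a genuinely different route. The paper stays at the level of probability algebra on already-quoted results: it first observes (Equation \eqref{Tepsfix}) that on $\textnormal{Fix}^K$ the first part of \eqref{taillepopfinale} forces $N_a$ to pass through $\lfloor\eps K\rfloor$, so $\P(T_\eps^K=\infty\mid\textnormal{Fix}^K)\to 0$; then it bounds $\P(\textnormal{Fix}^K\setminus\{T_\eps^K\leq S_\eps^K\})$ by $\P(T_\eps^K>S_\eps^K,T_\eps^K<\infty)+\P(T_\eps^K=\infty,\textnormal{Fix}^K)$ using \eqref{tildeTbiggerT}, and bounds $\P(\{T_\eps^K\leq S_\eps^K\}\setminus\textnormal{Fix}^K)$ by $\P(T_\eps^K<\infty)-\P(T_\eps^K<\infty\mid\textnormal{Fix}^K)\P(\textnormal{Fix}^K)$ via \eqref{taillepopfinale}, \eqref{Tepsfix} and \eqref{proba_fix}. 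You instead lower-bound $\P(F_\eps^K\cap\textnormal{Fix}^K)$ by restarting at time $T_\eps^K+t_\eps$ on the event $F_\eps^K$ of \eqref{def_FepsK} and re-using the extinction-phase coupling of Lemma \ref{third_step}, then subtract this from $\P(T_\eps^K\leq S_\eps^K)$ and $\P(\textnormal{Fix}^K)$ respectively; the set-theoretic reductions and the final $O(\eps)$ book-keeping are correct. What the paper's route buys is economy (no new Markov-property argument, no re-entry into the proof of Lemma \ref{third_step}); what yours buys is independence from the first part of \eqref{taillepopfinale}, at the price of needing the extinction estimate uniformly over initial states in $\Theta$ --- a uniformity the paper itself invokes when it applies Lemma \ref{third_step} with $\sup_{z\in\Theta}$, so this is consistent with the paper's level of rigour. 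One phrasing slip is worth fixing: the quantity you claim to extract, $\inf_{z\in\Theta}\liminf_K\P_z(T_{\textnormal{ext}}^K<\infty)\geq 1-c\eps$, is vacuous (the $A$-population goes extinct a.s. in finite time, but possibly after the $a$-population, in which case $\textnormal{Fix}^K$ fails). What you actually need, and what the event in \eqref{LS3} delivers since $T_{\textnormal{ext}}^K<U_\eps^K(z)$ there forces $N_a(T_{\textnormal{ext}}^K)\geq(\bar{n}_a-M''\eps)K>0$, is a uniform lower bound on the probability of fixation itself; with that substitution your argument is complete.
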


\begin{proof}
We have the following equality
\begin{eqnarray*}
 \P(T_{\eps}^K<\infty,T^K_\eps>S^K_\eps) &=& \P(T_{\eps}^K<\infty)-\P(T_{\eps}^K<\infty,T^K_\eps\leq S^K_\eps)\\
&=& \P(T_{\eps}^K<\infty)-\P(T^K_\eps\leq S^K_\eps),
\end{eqnarray*}
where we used the inclusion $\{T^K_\eps\leq S^K_\eps\}\subset \{T_{\eps}^K<\infty\}$, as $S_\eps^K$ is almost surely finite (a birth and death process with competition has a finite extinction time).
Equations \eqref{taillepopfinale} and \eqref{res_champ} ends the proof of \eqref{tildeTbiggerT}.
  From Equation \eqref{taillepopfinale}, we also have that for $\eps<\bar{n}_a/2$
\begin{equation}\label{Tepsfix}
\underset{K \to \infty}{\lim} \hspace{.1cm} \P(T_\eps^K=\infty|\textnormal{Fix}^K)\leq  \underset{K \to \infty}{\lim} 
\P\Big( \Big|N_a^K(T^K_{\text{ext}})-K\bar{n}_a\Big|>\eps K \Big| \textnormal{Fix}^K \Big)=0.
\end{equation}
This implies that 
$$ \P(T^K_\eps>S^K_\eps,\textnormal{Fix}^K)\leq \P(T^K_\eps>S^K_\eps,T^K_\eps<\infty)+\P(T^K_\eps=\infty,\textnormal{Fix}^K)\leq c\eps, $$
where we used \eqref{tildeTbiggerT}. Moreover, 
\begin{eqnarray*}
 \P(T^K_{\eps} \leq  S^K_\eps ,(\textnormal{Fix}^K)^c)&\leq & \P(T^K_{\eps} <\infty, (\textnormal{Fix}^K)^c)\\
& =& \P(T^K_{\eps} <\infty)- \P(T_\eps^K<\infty|\textnormal{Fix}^K)\P(\textnormal{Fix}^K)
 \leq  c\eps,
\end{eqnarray*}
where we used \eqref{taillepopfinale}, \eqref{Tepsfix} and \eqref{proba_fix}.
\end{proof}

We also recall some results on birth and death processes whose proofs can be found in Lemma 3.1 in
 \cite{schweinsberg2005random} and in \cite{athreya1972branching} p 109 and 112.

\begin{pro}
Let $Z=(Z_t)_{t \geq 0}$ be a birth and death process with individual birth and death rates $b$ and $d $. For $i \in \Z^+$, 
$T_i=\inf\{ t\geq 0, Z_t=i \}$ and $\P_i$ (resp. $\E_i$) is the law (resp. expectation) of $Z$ when $Z_0=i$. Then 
\begin{enumerate} 
 \item[$\bullet$] For $i \in \N$ and $t \geq 0$, 
\begin{equation} \label{espviemort}
 \E_i[Z_t]=ie^{(b-d)t}.
\end{equation}
 \item[$\bullet$] For $(i,j,k) \in \Z_+^3$ such that $j \in (i,k)$,
\begin{equation} \label{hitting_times1} \P_j(T_k<T_i)=\frac{1-(d/b)^{j-i}}{1-(d/b)^{k-i}} .\end{equation}
 \item[$\bullet$] If $d\neq b \in \R_+^*$, for every $i\in \Z_+$ and $t \geq 0$,
\begin{equation} \label{ext_times} \P_{i}(T_0\leq t )= \Big( \frac{d(1-e^{(d-b)t})}{b-de^{(d-b)t}} \Big)^{i}.\end{equation}
 \item[$\bullet$] If $0<d<b$, on the non-extinction event of $Z$, which has probability $1-(d/b)^{Z_0}$, the following convergence holds:
\begin{equation} \label{equi_hitting}  T_N/\log N \underset{N \to \infty}{\to} (b-d)^{-1}, \quad  a.s.  \end{equation}
\end{enumerate}
\end{pro}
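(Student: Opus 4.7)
The plan is to prove the four assertions in turn, using the branching (independence) property of $Z$: starting from $i$ individuals, $Z_t$ is the sum of $i$ i.i.d.\ copies of the process started from one individual, so it suffices in each case to understand the single-particle case.

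For \eqref{espviemort}, applying the infinitesimal generator to $f(z)=z$ gives $\mathcal{L}f(z) = bz - dz = (b-d)z$. Dynkin's formula yields $m_i(t) := \E_i[Z_t]$ satisfying $m_i'(t) = (b-d) m_i(t)$, $m_i(0)=i$, hence $m_i(t) = i e^{(b-d)t}$. For \eqref{hitting_times1}, hitting-order probabilities depend only on the embedded discrete-time jump chain, which is a simple random walk moving $+1$ with probability $p = b/(b+d)$ and $-1$ with probability $q = d/(b+d)$. The standard gambler's ruin recursion $h(n) = p\, h(n+1) + q\, h(n-1)$ on $\{i,\ldots,k\}$ with $h(i)=0$, $h(k)=1$ gives $h(j) = (1 - (q/p)^{j-i})/(1 - (q/p)^{k-i})$, which is the claim since $q/p = d/b$.

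For \eqref{ext_times}, I would set $q(t) := \P_1(T_0 \leq t) = \P_1(Z_t=0)$ and condition on the time $\tau \sim \mathrm{Exp}(b+d)$ and type of the first event: a death (probability $d/(b+d)$) causes immediate extinction, while a birth (probability $b/(b+d)$) leaves two independent copies that must both go extinct by time $t - \tau$. The resulting renewal identity, after multiplying by $e^{(b+d)t}$ and differentiating, becomes the Riccati ODE $q'(t) = d - (b+d)q(t) + b q(t)^2 = (1 - q(t))(d - b q(t))$ with $q(0) = 0$. Integrating by separation of variables using the partial-fraction decomposition of $1/[(1-q)(d-bq)]$ produces the stated closed form, and the branching property gives $\P_i(T_0 \leq t) = q(t)^i$.

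Finally, for \eqref{equi_hitting}, the plan is to combine \eqref{espviemort} with a second-moment computation (applying the generator to $z^2$ shows $\var_i(Z_t) = i \frac{b+d}{b-d}(e^{2(b-d)t} - e^{(b-d)t})$ when $b \neq d$) to conclude that $W_t := Z_t e^{-(b-d)t}$ is a non-negative $L^2$-bounded martingale when $b > d$, hence converges almost surely and in $L^2$ to a limit $W_\infty \geq 0$. Letting $t \to \infty$ in \eqref{ext_times} identifies $\P(W_\infty = 0) = \P(\text{extinction}) = (d/b)^{Z_0}$, so on the non-extinction event $W_\infty > 0$ and $\log Z_t = (b-d) t + \log W_\infty + o(1)$. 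The main obstacle will be inverting this limit for the c\`adl\`ag, integer-valued, non-monotone process $Z$ to obtain a statement about $T_N$; I would handle it by noting that $T_N$ is non-decreasing in $N$, that $Z_t \to \infty$ on non-extinction, and that $Z$ visits every integer on its passage to infinity, so that $T_N = \inf\{ t : W_t e^{(b-d)t} \geq N\}$ can be sandwiched between the deterministic level crossings of $(W_\infty \pm \delta)e^{(b-d)t}$ for any $\delta > 0$, which gives $T_N/\log N \to (b-d)^{-1}$ a.s.
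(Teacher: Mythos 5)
Your proposal is correct in substance, but it is worth noting that the paper does not prove this proposition at all: it is quoted as a collection of classical facts, with the proofs delegated to Lemma 3.1 of Schweinsberg and Durrett \cite{schweinsberg2005random} and to Athreya and Ney \cite{athreya1972branching}, pp.~109 and 112. What you supply is therefore a self-contained reconstruction of those classical arguments: the generator/Dynkin computation for \eqref{espviemort}, gambler's ruin for the embedded jump chain for \eqref{hitting_times1}, the backward (Riccati) equation $q'=(1-q)(d-bq)$, $q(0)=0$, solved by partial fractions for \eqref{ext_times} (your integration does reproduce the stated closed form, and the branching property legitimately gives the $i$-th power), and the $L^2$-bounded martingale $W_t=Z_te^{-(b-d)t}$ for \eqref{equi_hitting}. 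This buys the reader a proof from first principles at the cost of a page, where the paper buys brevity at the cost of sending the reader to two references; both are legitimate, and your route is the same one the cited sources essentially follow.

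The one place where your sketch is too quick is the sentence ``letting $t\to\infty$ in \eqref{ext_times} identifies $\P(W_\infty=0)=\P(\text{extinction})$.'' Letting $t\to\infty$ only gives $\P(\text{extinction})=(d/b)^{Z_0}$ and the trivial inclusion $\{\text{extinction}\}\subset\{W_\infty=0\}$; it does not by itself exclude trajectories that survive forever while $W_\infty=0$. You need one extra line: for instance, conditioning on the first jump shows that $p:=\P_1(W_\infty=0)$ satisfies $p=\tfrac{d}{b+d}+\tfrac{b}{b+d}p^2$, so $p\in\{d/b,1\}$, and $p=1$ is ruled out because the $L^2$ (hence $L^1$) convergence you established gives $\E[W_\infty]=Z_0>0$; therefore $\P(W_\infty=0)=(d/b)^{Z_0}=\P(\text{extinction})$, and the inclusion of events upgrades this to almost sure equality. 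With that identification in place, your concluding step is fine — indeed it can be streamlined by writing $N=Z_{T_N}=W_{T_N}e^{(b-d)T_N}$, so that $\log N=(b-d)T_N+\log W_{T_N}$ with $\log W_{T_N}\to\log W_\infty\in(-\infty,\infty)$ on non-extinction, which yields \eqref{equi_hitting} directly without the sandwich by deterministic level crossings.
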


Finally, we recall Lemma 3.4.3 in \cite{durrett2008probability} and Lemma 5.1 in \cite{schweinsberg2005random}. Let $d\in \N$. Then

\begin{lem}\label{lemmes_durrett}
\begin{enumerate}
 \item[$\bullet$] Let $a_1,...a_d$ and $b_1,...,b_d$ be complex numbers of modulus smaller than $1$. Then 
\begin{equation}\label{lemme343344} \Big|\underset{i=1}{\overset{d}{\prod}}a_i-\underset{i=1}{\overset{d}{\prod}}b_i\Big| \leq \underset{i=1}{\overset{d}{\sum}}|a_i-b_i| . \end{equation}


 \item[$\bullet$]Let $V$ and $V'$ be $\{0, 1,... , d\}$-valued random variables such that $\E[V] = \E[V']$. Then, there exist random variables $\tilde{V}$ and $\tilde{V}'$
 on some probability space such that $V$ and $\tilde{V}$ have the same distribution, $V$ and $\tilde{V}'$ have the same distribution, and
\begin{equation}\label{lemme51}\P( \tilde{V} \neq \tilde{V}') \leq  d \max\{\P( \tilde{V} \geq 2),\P( \tilde{V}'\geq 2)\}.\end{equation}
\end{enumerate}
\end{lem}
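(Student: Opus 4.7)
\textbf{Proof plan for Lemma \ref{lemmes_durrett}.}

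For the first part, the natural approach is a telescoping decomposition. I would write
\begin{equation*}
\prod_{i=1}^d a_i-\prod_{i=1}^d b_i=\sum_{j=1}^d\Big(\prod_{i<j}b_i\Big)(a_j-b_j)\Big(\prod_{i>j}a_i\Big),
\end{equation*}
and then, since every $a_i$ and $b_i$ has modulus at most $1$, each of the two flanking products has modulus at most $1$, so the triangle inequality immediately gives $|\prod a_i-\prod b_i|\leq \sum_{j=1}^d|a_j-b_j|$. No obstacle here; this is a one-line verification once the telescoping identity is written down.

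For the second part, the plan is to build the coupling explicitly by concentrating mass on the event $\{\tilde V=\tilde V'\}$ as much as the marginals allow, and controlling the residual via the $\geq 2$ tails. Set $p_k=\P(V=k)$ and $p'_k=\P(V'=k)$ for $k\in\{0,\dots,d\}$. The equality $\E[V]=\E[V']$ rewrites as
\begin{equation*}
p_1-p'_1=\sum_{k\geq 2}k(p'_k-p_k),
\end{equation*}
so that $|p_1-p'_1|\leq d\,\max\{\P(V\geq 2),\P(V'\geq 2)\}=:d\Delta$. I would then construct $(\tilde V,\tilde V')$ on a common probability space as follows: first sample a common value $W\in\{0,1\}$ using the overlap measure $(p_0\wedge p'_0,\,p_1\wedge p'_1)$ and set $\tilde V=\tilde V'=W$ on this event (of probability $p_0\wedge p'_0+p_1\wedge p'_1$); on the complement, draw $\tilde V$ and $\tilde V'$ independently from the remaining (normalised) defect measures on the respective marginals. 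By construction $\tilde V\sim V$ and $\tilde V'\sim V'$, and
\begin{equation*}
\P(\tilde V\neq \tilde V')\leq 1-p_0\wedge p'_0-p_1\wedge p'_1=|p_0-p'_0|\wedge\text{(symmetric)}\leq |p_1-p'_1|+\P(V\geq 2)+\P(V'\geq 2),
\end{equation*}
the last step using $p_0-p'_0=(p'_1-p_1)+\sum_{k\geq 2}(p'_k-p_k)$. Combining with the bound $|p_1-p'_1|\leq d\Delta$ and noting that $(d-1)\Delta$ dominates the two extra tail terms (assuming $d\geq 1$, otherwise the statement is trivial) yields $\P(\tilde V\neq \tilde V')\leq d\Delta$, as required.

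The only delicate step is the bookkeeping to make sure the constant is exactly $d$ rather than $d+2$; this is where I expect to have to tune the coupling. A cleaner alternative I would try if the above is lossy is to define the coupling iteratively: set $\tilde V=\tilde V'$ on the diagonal mass of the joint $(V,V')$ obtained by first matching as much mass as possible at each value $k\in\{0,1,\dots,d\}$, then using the $\E[V]=\E[V']$ constraint to redistribute the residual. In that formulation the disagreement event is contained in $\{\tilde V\geq 2\}\cup\{\tilde V'\geq 2\}$ together with at most $d$ units of mass shifted from level $1$ to level $0$ or vice versa, each unit of shift being supported on disagreements with at most one endpoint in $\{0,1\}$; this directly gives the factor $d$. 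Once the coupling is in place the final inequality is immediate, so the main technical obstacle is purely the combinatorial bookkeeping of the coupling rather than any analytic estimate.
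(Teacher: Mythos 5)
The paper itself does not prove this lemma: it is recalled verbatim, the first bullet being Lemma 3.4.3 of Durrett's book and the second Lemma 5.1 of Schweinsberg and Durrett, so the only comparison available is with those cited proofs. Your treatment of the first bullet (telescoping $\prod a_i-\prod b_i=\sum_j(\prod_{i<j}b_i)(a_j-b_j)(\prod_{i>j}a_i)$ and the triangle inequality) is exactly the standard argument and is complete.

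For the second bullet there is a genuine gap in the bookkeeping, which you yourself flag. Writing $p_k=\P(V=k)$, $p'_k=\P(V'=k)$, $\Delta=\max\{\P(V\geq2),\P(V'\geq2)\}$ and $q=p_0\wedge p'_0+p_1\wedge p'_1$, your coupling correctly gives $\P(\tilde V\neq\tilde V')\leq 1-q$, but the chain ``$1-q\leq |p_1-p'_1|+\P(V\geq2)+\P(V'\geq2)$ together with $|p_1-p'_1|\leq d\Delta$'' only yields $(d+2)\Delta$, and the rescue you propose, namely that $(d-1)\Delta$ dominates the two tail terms, is both false for small $d$ (for $d=2$ the two tails can each equal $\Delta$) and in any case does not combine with $|p_1-p'_1|\leq d\Delta$ to produce $d\Delta$. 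The bound $d\Delta$ is sharp (take $V'=d$ with probability $\eps$, else $0$, and $V=1$ with probability $d\eps$, else $0$), so no slack of this kind is available; the vague ``iterative'' alternative in your last paragraph is not carried out and cannot be accepted as a proof.

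The good news is that your coupling does work; what is missing is a sharper use of the mean identity. Note the two exact representations $1-q=(p_0-p'_0)^++(p_1-p'_1)^++\P(V\geq2)=(p'_0-p_0)^++(p'_1-p_1)^++\P(V'\geq2)$. If $p_1-p'_1$ and $p_0-p'_0$ have the same sign, one of these immediately gives $1-q\leq\Delta\leq d\Delta$. In the mixed case, say $p_1<p'_1<$ and $p_0>p'_0$, the first representation combined with $p_0-p'_0=(p'_1-p_1)+\P(V'\geq2)-\P(V\geq2)$ gives $1-q=(p'_1-p_1)+\P(V'\geq2)$, and the key refinement is $p'_1-p_1=\sum_{k\geq2}k(p_k-p'_k)\leq d\,\P(V\geq2)-2\,\P(V'\geq2)$, using $\sum_{k\geq2}kp'_k\geq 2\P(V'\geq2)$ rather than discarding that term; this yields $1-q\leq d\,\P(V\geq2)-\P(V'\geq2)\leq d\Delta$, and the symmetric case is identical. (The degenerate cases $d=0,1$ are trivial since then equality of means forces $p_1=p'_1$.) With this case analysis inserted, your construction gives exactly the constant $d$ of \eqref{lemme51}.
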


For $0<s<1$, if $\tilde{Z}^{(s)}$ denotes a random walk with jumps $\pm 1$ where up jumps occur
with probability $1/(2-s)$ and down jumps with probability $(1-s)/(2-s)$, we denote by $\P_i^{(s)}$ the law of $\tilde{Z}^{(s)}$ when the initial state is $i \in \N$ 
and introduce for every $a \in \R_+$ the stopping time
\begin{equation} \label{deftauas}
 \tau_a:=\inf \{ n \in \Z_+, \tilde{Z}^{(s)}_n= \lfloor a \rfloor \}.
\end{equation}
We also introduce for $\eps$ small enough and $0\leq j,k< \lfloor \eps K \rfloor$, the quantities
\begin{equation} \label{defqkl} q_{j,k}^{(s_1,s_2)}:=\frac{\P_{k+1}^{(s_1)}(\tau_{  \eps K }<\tau_k)}{\P_{k+1}^{(s_2)}(\tau_{  \eps K }<\tau_j)}=\frac{s_1}{1-(1-s_1)^{\lfloor \eps K \rfloor -k}} \frac{1-(1-s_2)^{\lfloor \eps K \rfloor -j}}{1-(1-s_2)^{k+1-j}}  , \quad  0<s_1,s_2<1,
\end{equation}
whose expressions are direct consequences of \eqref{hitting_times1}.
Let us now state a technical result, which helps us to control upcrossing numbers of
 the process $\tilde{N}_a^K$ before reaching the size $\lfloor\eps K\rfloor$ (see Appendix \ref{prooflemma}).

\begin{lem}\label{ineqqs}
For $a \in ]0,1/2[ $, $(s_1,s_2) \in [a,1-a]^2$, and $0\leq j \leq k < l < \lfloor \eps K \rfloor $,
\begin{equation} \label{minqk} 
 q^{(s_1 \wedge s_2, s_1 \vee s_2)}_{0,k} \geq s_1 \wedge s_2\quad \text{and} \quad 
 \Big| \frac{1}{q^{(s_1, s_2)}_{k,l}}-\frac{1}{q^{(s_2, s_1)}_{j,l}} \Big|\leq  \frac{4(1+{1}/{s_2})}{ea^2 |\log (1-a)|} |s_2-s_1|+ \frac{(1-s_2)^{l+1-k}}{s_2^3}.
\end{equation}
\end{lem}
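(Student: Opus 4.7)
The first inequality is immediate from the definition. Setting $j=0$ and denoting $s_-:=s_1\wedge s_2$, $s_+:=s_1\vee s_2$ and $N:=\lfloor\eps K\rfloor$,
\[
q^{(s_-,s_+)}_{0,k}=\frac{s_-}{1-(1-s_-)^{N-k}}\cdot\frac{1-(1-s_+)^{N}}{1-(1-s_+)^{k+1}};
\]
the first factor is at least $s_-$ because its denominator lies in $(0,1]$, and since $k+1\leq N$ and $0<1-s_+<1$ the second factor is at least $1$. Multiplying gives $q^{(s_-,s_+)}_{0,k}\geq s_-$.

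For the second inequality the plan is to telescope through the intermediate pair $(s_2,s_2)$ and write
\[
\frac{1}{q^{(s_1,s_2)}_{k,l}}-\frac{1}{q^{(s_2,s_1)}_{j,l}}=A+B+C,
\]
where $A$ and $C$ swap only one of the two parameters from $s_1$ to $s_2$ at a fixed index pair, while $B=\frac{1}{q^{(s_2,s_2)}_{k,l}}-\frac{1}{q^{(s_2,s_2)}_{j,l}}$ keeps $s=s_2$ fixed and swaps the index $k$ to $j$. For $B$, setting $u:=1-s_2$, a short algebraic simplification yields
\[
\frac{1-u^{l+1-k}}{1-u^{N-k}}-\frac{1-u^{l+1-j}}{1-u^{N-j}}=\frac{(1-u^{k-j})(u^{N-k}-u^{l+1-k})}{(1-u^{N-k})(1-u^{N-j})},
\]
and using $l+1\leq N$ together with $1-u^m\geq s_2$ for every $m\geq 1$ bounds this in absolute value by $u^{l+1-k}/s_2^2$; multiplying by the remaining prefactor $(1-u^{N-l})/s_2\leq 1/s_2$ yields exactly $|B|\leq(1-s_2)^{l+1-k}/s_2^3$.

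For $A$ and $C$ we introduce the auxiliary functions $f_n(s):=\frac{1-(1-s)^n}{s}$ and $g_{m,n}(s):=\frac{1-(1-s)^m}{1-(1-s)^n}$, so that $A$ equals $[f_{N-l}(s_1)-f_{N-l}(s_2)]$ times a prefactor bounded by $1$, and $C$ equals $[g_{l+1-j,N-j}(s_2)-g_{l+1-j,N-j}(s_1)]$ times the prefactor $(1-(1-s_2)^{N-l})/s_2\leq 1/s_2$. Applying the mean value theorem on $[s_1\wedge s_2,s_1\vee s_2]\subset[a,1-a]$ and computing $f_n',g'_{m,n}$ directly, every resulting summand reduces, after using $1-(1-s)^n\geq s$, to either $1/s^2$ or a term of the form $n(1-s)^{n-1}/s$. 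The key input is the elementary envelope $\max_{n\geq 1}nu^{n-1}=\frac{1}{eu|\log u|}$, obtained by maximizing the continuous relaxation at $n=1/|\log u|$: with $u=1-s\in[a,1-a]$ it gives $n(1-s)^{n-1}\leq\frac{1}{ea|\log(1-a)|}$, and combined with $s\geq a$ and the observation that $1/a^2\leq \frac{2}{ea^2|\log(1-a)|}$ for $a\leq 1/2$, it produces $|A|\leq \frac{c|s_2-s_1|}{ea^2|\log(1-a)|}$ and $|C|\leq \frac{c|s_2-s_1|}{s_2\,ea^2|\log(1-a)|}$ with an absolute constant $c$, whose sum is of the claimed form. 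The main obstacle is extracting the $1/|\log(1-a)|$ factor cleanly: the naive estimate $\sum_{i\geq 1} iu^{i-1}=1/(1-u)^2$ is too crude, and one has to apply the sharper pointwise bound $nu^{n-1}\leq\frac{1}{eu|\log u|}$ on each individual geometric term that appears in the derivative of $g_{m,n}$, before collapsing the remaining $1/a^2$ contributions into $\frac{1}{ea^2|\log(1-a)|}$ using the range $a\in(0,1/2)$.
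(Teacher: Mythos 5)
Your proof is correct and follows essentially the same route as the paper: your three telescoping terms $A$, $B$, $C$ are exactly the three terms of the paper's triangle-inequality decomposition (parameter swap in the factor $f_{N-l}$, index swap $k\to j$ at fixed $s_2$ via the same algebraic identity, parameter swap in the factor $g_{l+1-j,N-j}$), and the $1/|\log(1-a)|$ factor is extracted by the same envelope bound $\sup_{x>0} x u^{x-1}=1/(eu|\log u|)$, only applied pointwise in $u=1-s$ rather than at $u=1-a$ as in the paper. The constants you get ($3$ for the $A$-type term and $2/s_2$ for the $C$-type term, provided the summand $nu^{n-1}(1-u^m)/(1-u^n)^2$ is reduced to $nu^{n-1}/s$ using $1-u^m\le 1-u^n$ rather than to $nu^{n-1}/s^2$) indeed stay within the stated $4(1+1/s_2)$.
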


\begin{proof}The first part of (\ref{minqk}) is a direct consequence of Definiton \eqref{defqkl}. 
Let $a$ be in $]0, 1/2[$ and consider functions $f_{\alpha,\beta} : x \mapsto (1-x^\alpha)/(1-x^{\beta}), (\alpha,\beta) \in \N^2, x \in [a,1-a] $.
 Then for $x \in [a,1-a]$,
\begin{equation} \label{lemtech} \| f_{\alpha,\beta} ' \|_\infty \leq 4(ea^2|\log (1-a)|)^{-1}. \end{equation}
Indeed, the first derivative of $f_{\alpha,\beta}$ is:
$$ f_{\alpha,\beta}'(x)=\frac{\beta x^{\beta-1}(1-x^{\alpha})-\alpha x^{\alpha-1}(1-x^{\beta})}{(1-x^{\beta})^2}. $$
Hence, for $x \in [a,1-a]$,
$$ |f_{\alpha,\beta}'(x)|\leq \frac{\beta (1-a)^{\beta}+\alpha (1-a)^{\alpha}}{(1-a)a^2}\leq 2\frac{\beta (1-a)^{\beta}+\alpha (1-a)^{\alpha}}{a^2} ,$$
where we used that $1-x^{\beta}\geq 1-(1-a)$ and that $1-a\geq 1/2$. Adding the following inequality 
$$\underset{k \in \N}{\sup}\hspace{.2cm}k(1-a)^k\leq\underset{x\in \R^+}{\sup}\hspace{.2cm}x(1-a)^x= (e|\log (1-a)|)^{-1},$$
completes the proof of \eqref{lemtech}.
From \eqref{hitting_times1}, we get for $0<s<1$ and $0\leq j\leq k<\lfloor \eps K \rfloor$,
\begin{eqnarray}\label{diff_k0} \Big|  \P_{l+1}^{(s)}(\tau_{ \eps K }<\tau_k)-\P_{l+1}^{(s)}(\tau_{ \eps K }<\tau_j)
 \Big|&=&\frac{(1-(1-s)^{k-j})((1-s)^{l+1-k}-(1-s)^{\lfloor \eps K \rfloor-k})}{(1-(1-s)^{\lfloor \eps K \rfloor-k})(1-(1-s)^{\lfloor \eps K \rfloor-j})} \nonumber \\
&\leq & (1-s)^{l+1-k}s^{-2}.
\end{eqnarray}
The triangle inequality leads to:
\begin{eqnarray*}
  \Big| \frac{1}{q^{(s_1, s_2)}_{k,l}}-\frac{1}{q^{(s_2, s_1)}_{j,l}} \Big|&=&\Big| \frac{\P_{l+1}^{(s_2)}(\tau_{  \eps K }<\tau_k)}
{\P_{l+1}^{(s_1)}(\tau_{  \eps K }<\tau_l)}-\frac{\P_{l+1}^{(s_1)}(\tau_{  \eps K }<\tau_j)}{\P_{l+1}^{(s_2)}(\tau_{  \eps K }<\tau_l)} \Big|\\
&\leq&\Big| \frac{1}{\P_{l+1}^{(s_1)}(\tau_{  \eps K }<\tau_l)}-\frac{1}{\P_{l+1}^{(s_2)}(\tau_{  \eps K }<\tau_l)}
\Big|\P_{l+1}^{(s_2)}(\tau_{  \eps K }<\tau_k)\\
&& +\frac{1}{\P_{l+1}^{(s_2)}(\tau_{  \eps K }<\tau_l)} \Big|\P_{l+1}^{(s_2)}(\tau_{  \eps K }<\tau_k)-\P_{l+1}^{(s_2)}(\tau_{  \eps K }<\tau_j)\Big|\\
&& +\frac{1}{\P_{l+1}^{(s_2)}(\tau_{  \eps K }<\tau_l)} \Big|\P_{l+1}^{(s_1)}(\tau_{  \eps K }<\tau_j)-\P_{l+1}^{(s_2)}(\tau_{  \eps K }<\tau_j)\Big|.
\end{eqnarray*}
Noticing that $ \P_{l+1}^{(s_2)}(\tau_{  \eps K }<\tau_l)\geq \P_{l+1}^{(s_2)}(\tau_{  \infty }<\tau_l) = \P_{1}^{(s_2)}(\tau_{  \infty }<\tau_0)=s_2 ,$ 
and using (\ref{diff_k0}) and the Mean Value Theorem with (\ref{lemtech}), we get the second part of (\ref{minqk}).
\end{proof}

\section{Proofs of Lemmas \ref{uphold} and \ref{lemmajpro}} \label{prooflemma}

\begin{proof}[Proof of Equation \eqref{majcov}]
In the whole proof, the integer $n_A$ denotes the state of $\tilde{N}_A$ and thus belongs to $I_\eps^K$ 
which has been defined in \eqref{compact1}. ${\P}_{(n_A,n_a)}$ (resp. $\hat{\P}_{(n_A,n_a)}$) denotes the probability ${\P}$ 
(resp. $\hat{\P}$) when $(\tilde{N}_A(0),\tilde{N}_a(0))=(n_A,n_a) \in \Z_+^2$. We introduce for $u \in \R_+$ the hitting time of $\lfloor u \rfloor$ by the process 
$\tilde{N}_a$:
\begin{equation}\label{tpssigma}
 \sigma^K_u:=\inf \{ t\geq 0, \tilde{N}_a^K(t)= \lfloor u \rfloor\}.
\end{equation}

Let $(i,j,k)$ be in $\Z_+^3$ with $j<k< \lfloor \eps K \rfloor$. Between jumps  $\zeta_j^K$ and $J^K$ the process $\tilde{N}_a$ necessarily 
jumps from 
$k$ to $k+1$. Then, either it reaches $\lfloor \eps K \rfloor$ before returning to $k$, either it again jumps from 
$k$ to $k+1$ and so on. Thus we approximate the probability that there is only one jump from $k$ to $k+1$ by comparing $U_{j,k}^{(K,2)}$ 
with geometrically distributed random variables. As we do not know the value of $\tilde{N}_A$ when $\tilde{N}_a$ hits $k+1$ for the first time, 
we take the maximum over all the possible values in $I_\eps^K$. Recall Definition \eqref{defhatP}. We get, 
as $\{\tilde{T}_{\eps}^K<\sigma_{j}^K\}\subset\{\tilde{T}_{\eps}^K<\sigma_{k}^K\}\subset\{\tilde{T}_{\eps}^K<\infty\}$:
\begin{eqnarray*}
 \hat{\P}(U_{j,k}^{(K,2)}=1|U_{j}^{K}=i)&\leq & \underset{n_A\in I_\eps^K}{\sup}\hat{\P}_{(n_A,k+1)}(\tilde{T}_{\eps}^K<\sigma_{k}^K|
\tilde{T}_{\eps}^K<\sigma_{j}^K,U_{j}^{K}=i)\\
&=&\underset{n_A\in I_\eps^K}{\sup}{\P}_{(n_A,k+1)}(\tilde{T}_{\eps}^K<\sigma_{k}^K|
\tilde{T}_{\eps}^K<\sigma_{j}^K,U_{j}^{K}=i)\\
&=&\underset{n_A\in I_\eps^K}{\sup}\frac{{\P}_{(n_A,k+1)}(\tilde{T}_{\eps}^K<\sigma_{k}^K,U_{j}^{K}=i)}
{{\P}_{(n_A,k+1)}(
\tilde{T}_{\eps}^K<\sigma_{j}^K,U_{j}^{K}=i)}\\
&=&\underset{n_A\in I_\eps^K}{\sup}\frac{{\P}_{(n_A,k+1)}(\tilde{T}_{\eps}^K<\sigma_{k}^K)}
{{\P}_{(n_A,k+1)}(
\tilde{T}_{\eps}^K<\sigma_{j}^K)},
\end{eqnarray*}
where we used that on the events $\{\tilde{T}_{\eps}^K<\sigma_{j}^K\}$ and $\{\tilde{T}_{\eps}^K<\sigma_{k}^K\}$ the jumps from $j$ to $j+1$ 
belong to the past, and Markov Property.
Coupling \eqref{couplage12} allows us to compare these conditional probabilities with the probabilities of the same 
events under $\P^{(s_-(\eps))}$ and $\P^{(s_+(\eps))}$, and recalling \eqref{defqkl} we get
$$ \hat{\P}(U_{j,k}^{(K,2)}=1|U_{j}^{K}=i)\leq  
\frac{\P_{k+1}^{(s_+(\eps))}(\tau_{\eps K}<\tau_{k})}
{\P_{k+1}^{(s_-(\eps))}(\tau_{\eps K}<\tau_j )}= q^{(s_+(\eps),s_-(\eps))}_{j,k}. $$
In an analogous way we show that $ \hat{\P}(U_{j,k}^{(K,2)}=1|U_{j}^{K}=i)\geq  q^{(s_-(\eps),s_+(\eps))}_{j,k}$. We deduce that we can construct 
two geometrically distributed random variables $G_1$ and $G_2$, possibly on an enlarged space, 
with respective parameters $q^{(s_+(\eps),s_-(\eps))}_{j,k}\wedge 1$ and $q^{(s_-(\eps),s_+(\eps))}_{j,k}$ such that on the event $\{ U_{j}^{K}=i \}$,
\begin{equation}\label{compaU'|Uj1} G_1\leq U_{j,k}^{(K,2)} \leq G_2.  \end{equation}
For the same reasons we obtain $ q^{(s_-(\eps),s_+(\eps))}_{j,k}\leq \hat{\P}(U_{j,k}^{(K,2)}=1)\leq q^{(s_+(\eps),s_-(\eps))}_{j,k} \wedge 1$, 
and again we can construct two random variables $G'_1\overset{d}{=}G_1$ and $G'_2\overset{d}{=}G_2$ such that
\begin{equation}\label{compaU'} G'_1 \leq U_{j,k}^{(K,2)} \leq G'_2. \end{equation}
Recall that $U_{0,k}^{(K,2)}=U_{k}^{K}$. Hence taking $j=0$ and adding the first part of Equation \eqref{minqk} give the first inequality of \eqref{majcov}.
According to Definition \eqref{def_s_-s_+1}, $|s_+(\eps)-s_-(\eps)|\leq c\eps$ for a finite $c$. 
Hence Equations \eqref{compaU'|Uj1}, \eqref{compaU'}  and \eqref{minqk} entail the existence of a finite  $c$ such that for $\eps$ small 
enough $| \hat{\E}[U_{j,k}^{(K,2)}|U_{j}^{K}=i ]-\hat{\E}[U_{j,k}^{(K,2)} ]| \leq  c\eps+ {(1-s_-(\eps))^{k+1-j}}/{s_-^3(\eps)}$. Thus according 
to the first part of Equation \eqref{majcov},
\begin{eqnarray} \label{cov1} \Big|\hat{\cov}(U_{j,k}^{(K,2)},U_{j}^{K})\Big| & \leq & \underset{i \in \N^*}{\sum} 
i\hat{\P}(U_{j}^{K}=i) \Big| \hat{\E}[U_{j,k}^{(K,2)}|U_{j}^{K}=i ]-\hat{\E}[U_{j,k}^{(K,2)} ] \Big| \nonumber \\
&\leq & \frac{2}{s^2_-(\eps)}\Big(c\eps+ \frac{(1-s_-(\eps))^{k+1-j}}{s_-^3(\eps)} \Big) ,\end{eqnarray}
where we use that $U_{j}^{K}\leq (U_{j}^{K})^2$. This ends the proof of \eqref{majcov}.
\end{proof}

\begin{proof}[Proof of Equation \eqref{E''U}]
Definitions \eqref{defdaba} and Coupling \eqref{couplage12} ensure that for $n_A \in I_\eps^K$, $\eps$ small enough and 
$K$ large enough, 
\begin{eqnarray*} \hat{\P}_{(n_A,k)}(\tilde{N}_a(dt)=k+1)
&=& \frac{\P_{(n_A,k+1)}(\tilde{T}_\eps^K<\infty )}
{\P_{(n_A,k)}(\tilde{T}_\eps^K<\infty )}{\P}_{(n_A,k)}(\tilde{N}_a(dt)=k+1)
\\&\geq &\frac{\P_{k+1}^{(s_-(\eps))}(\tau_{\eps K}<\tau_0)}{\P_{k}^{(s_+(\eps))}(\tau_{\eps K}<\tau_0)}f_a k dt\\
& = & \frac{1-(1-s_-(\eps))^{k+1}}{1-(1-s_-(\eps))^{\lfloor \eps K \rfloor}}
\frac{1-(1-s_+(\eps))^{\lfloor \eps K \rfloor}}{1-(1-s_+(\eps))^{k}}f_akdt\\
& \geq & s_-^2(\eps)f_akdt,
\end{eqnarray*}
and
\begin{eqnarray*} \hat{\P}_{(n_A,k)}(\tilde{N}_A(dt)\neq n_A)&\leq & 
\frac{\P_{k}^{(s_+(\eps))}(\tau_{\eps K}<\tau_0)}{\P_{k}^{(s_-(\eps))}(\tau_{\eps K}<\tau_0)}
{\P}_{(n_A,k)}(\tilde{N}_A(dt)\neq n_A)
\\ & \leq & (1+c\eps)2f_A\bar{n}_AKdt
\end{eqnarray*}
for a finite $c$, where we use \eqref{hitting_times1} and that $D_A+C_{A,A}\bar{n}_A=f_A$. Thus for $\eps$ small enough:
\begin{equation*}\label{holds}\hat{\P}(\tilde{N}_a(\tau_{m+1}^K)\neq \tilde{N}_a(\tau_{m}^K)| \tilde{N}_a(\tau_{m}^K)=k)\geq \frac{s_-^2(\eps)f_ak}{3f_A\bar{n}_AK}. \end{equation*}
If  $D_{k}^{K}$ denotes the downcrossing number from $k$ to $k-1$ before $\tilde{T}_\eps^K$, 
then under the probability $\hat{\P}$, we can bound $U_{k}^{K}+D_{k}^{K}+H_{k}^{K}$ by the sum of 
$U_{k}^{K}+D_{k}^{K}$ independent geometrically distributed random variables $G_i^K$ with parameter $ {s_-^2(\eps)f_ak}/{3f_A\bar{n}_AK}$ and 
$ H_{k}^{K}\leq {\sum}_{1\leq i\leq U_{k}^{K}+D_{k}^{K}}(G_i^K-1). $
Let us notice that if $k\geq 2$, $D_{k}^{K}=U_{k-1}^{K}-1$, and $D_{1}^{K}=0$. Using the first part of \eqref{majcov} twice we get 
$$ \hat{\E}[H_{k}^{K}] \leq \Big( \frac{4}{s^2_-(\eps)}-1 \Big) \Big(\frac{3f_A\bar{n}_AK}{s_-^2(\eps)f_ak}-1\Big),$$
which ends the proof of the first inequality in \eqref{E''U}.\\

\noindent As the mutant population 
size is not Markovian we cannot use symmetry and the Strong Markov Property to control the dependence of jumps before and after the last visit to a 
given state as in \cite{schweinsberg2005random}. Hence we describe the successive excursions of $\tilde{N}_a^K$ above a given level to 
get the last 
inequality in \eqref{E''U}. Let
$\tilde{U}_{j,k}^{(i)}$ be the number of jumps from 
$k$ to $k+1$ during the $i$th excursion above $j$. 
We first bound the expectation $\hat{\E}[ (\tilde{U}_{j,k}^{(i)})^2 ]$. During an excursion above $j$, $\tilde{N}_a$ hits $j+1$, but 
we do not know the value of $\tilde{N}_A$ at this time. Thus we take the maximum value for the probability when $n_A$ belongs to 
$I_\eps^K$, and 
$$\hat{\P} (\tilde{U}_{j,k}^{(i)}\geq 1 ) \leq {\sup}_{n_A\in I_\eps^K} \hat{\P}_{(j+1,n_A)} 
( \sigma_{k+1}^K<\sigma_j^K|\sigma_j^K<\tilde{T}_{\eps}^K).$$
 Then using Coupling \eqref{couplage12} and Definition \eqref{defhatP} we obtain
\begin{eqnarray*}
{\hat\P} \Big(\tilde{U}_{j,k}^{(i)}\geq 1 \Big) & \leq & 
\sup_{n_A\in I_\eps^K} \frac{\P_{(j+1,n_A)}(\sigma_{k+1}^K<\sigma_j^K<\tilde{T}_{\eps}^K<\infty)}
{\P_{(j+1,n_A)}(\sigma_j^K<\tilde{T}_{\eps}^K<\infty)} 
\\
& \leq &  \frac{\P_{j}^{(s_+(\eps))}(\tau_{ \eps K }<\tau_{0})\P_{k+1}^{(s_-(\eps))}(\tau_{j}<\tau_{ \eps K })\P_{j+1}^{(s_+(\eps))}(\tau_{k+1}<\tau_{ j })}
{\P_{j}^{(s_-(\eps))}(\tau_{ \eps K }<\tau_{0})\P_{j+1}^{(s_+(\eps))}(\tau_{j}<\tau_{ \eps K })}.
\end{eqnarray*}
Adding Equation \eqref{hitting_times1} we finally get 
\begin{equation}\label{uijk1} \hat{\P} \Big(\tilde{U}_{j,k}^{(i)}\geq 1 \Big) \leq \frac{(1-s_-(\eps))^{k+1-j}}{s_-(\eps)(1-s_+(\eps))} . \end{equation}
Moreover if $\tilde{U}_{j,k}^{(i)}\geq 1$, $\tilde{N}_a$ necessarily hits $k$ after its first jump from $k$ to $k+1$, and before its return to $j$. 
Using the same techniques as before we get:
\begin{eqnarray*}
 \hat{\P} \Big(\tilde{U}_{j,k}^{(i)}= 1 |\tilde{U}_{j,k}^{(i)}\geq 1 \Big)&\geq & \underset{n_A \in I_\eps^K}{\inf} 
 \hat{\P}_{(n_A,k)}(\sigma_{j}^K<\sigma_{k+1}^K |\sigma_{j}^K<\tilde{T}_{\eps}^K ) \\
&\geq &
 \frac{\P_{j}^{(s_-(\eps))}(\tau_{ \eps K }<\tau_{0})\P_{k}^{(s_+(\eps))}(\tau_{j}<\tau_{k+1})}
{\P_{j}^{(s_+(\eps))}(\tau_{ \eps K }<\tau_{0})\P_{k}^{(s_-(\eps))}(\tau_{j}<\tau_{\eps K })},
\end{eqnarray*}
which yields
\begin{equation}\label{uijk2} \hat{\P} \Big(\tilde{U}_{j,k}^{(i)}= 1 |\tilde{U}_{j,k}^{(i)}\geq 1 \Big)
 \geq s_-(\eps)s_+(\eps)\Big(\frac{1-s_+(\eps)}{1-s_-(\eps)}\Big)^{k-j}\geq s_-^2(\eps)\Big(\frac{1-s_+(\eps)}{1-s_-(\eps)}\Big)^{k-j}=:q.\end{equation}
Hence, given that $\tilde{U}_{j,k}^{(i)}$ is non-null, $\tilde{U}_{j,k}^{(i)}$ is smaller than a geometrically distributed random variable with 
parameter $q$. In particular, 
$$ \E \Big[ (\tilde{U}_{j,k}^{(i)})^2 |\tilde{U}_{j,k}^{(i)}\geq 1 \Big]\leq \frac{2}{q^2}= \frac{2}{s^4_-(\eps)}\Big(\frac{1-s_-(\eps)}{1-s_+(\eps)}\Big)^{2(k-j)}. $$
Adding Equation (\ref{uijk1}) and recalling that $|s_+(\eps)-s_-(\eps)|\leq c\eps$ for a finite $c$ yield
$$ \hat{\E} \Big[(\tilde{U}_{j,k}^{(i)})^2\Big]\leq \frac{2 \lambda_\eps^{k-j}}{ s^5_-(\eps)(1-s_+(\eps))} , \quad \text{where} \quad \lambda_\eps:=\frac{(1-s_-(\eps))^3}{(1-s_+(\eps))^2} <1.$$
Using that for $n \in \N$ and $(x_i,1\leq i \leq n) \in \R^n$, $(\sum_{1\leq i \leq n}x_i)^2 \leq n\sum_{1\leq i \leq n}x_i^2$ and that the number of excursions above $j$ before $\tilde{T}_\eps^K$ is $U_{j}^{K}-1$, we get
$$ \hat{\E} \Big[({U}_{j,k}^{(K,1)})^2\Big]\leq \hat{\E} \Big[U_{j}^{K}-1\Big] \frac{2 \lambda_\eps^{k-j}}{ s^5_-(\eps)(1-s_+(\eps))} \leq \frac{4 \lambda_\eps^{k-j}}{ s^7_-(\eps)(1-s_+(\eps))}, $$
where we used the first part of Equation \eqref{majcov}. This ends the proof of Equation \eqref{E''U}.\end{proof}

\begin{proof}[Proof of Equation \eqref{espnoreco}] 
Definition \eqref{defqkl}, Inequality \eqref{compaU'} and Equation \eqref{hitting_times1} yield:
\begin{equation*} \label{exprexpli}  r_K \sum_{k=1}^{\lfloor \eps K \rfloor -1} \frac{\hat{\E}[U_k^K]}{k+1} \geq r_K\underset{k=1}{\overset{\lfloor \eps K \rfloor -1}{\sum}}
\Big[(k+1)q_{0,k}^{(s_+(\eps),s_-(\eps))}\Big]^{-1}=\frac{r_K (A-B)}{s_+(\eps)({1-(1-s_-(\eps))^{\lfloor \eps K \rfloor}})}
, \end{equation*}
with
$$ A:=\underset{k=1}{\overset{\lfloor \eps K \rfloor -1}{\sum}}\frac{1-(1-s_-(\eps))^{k+1}}{k+1}, \quad \text{and} \quad
  B:=(1-s_+(\eps))^{\lfloor \eps K \rfloor}\underset{k=1}{\overset{\lfloor \eps K \rfloor -1}{\sum}}\frac{1-(1-s_-(\eps))^{k+1}}{(1-s_+(\eps))^{k}(k+1)}. $$
For large $K$, $A=\log (\eps K)+O(1)$, and for every $u>1$ there exists $D(u)<\infty$ such that 
$\sum_{k=1}^{\lfloor \eps K \rfloor}u^k/(k+1)\leq D(u)u^{\lfloor \eps K \rfloor }/\lfloor \eps K \rfloor $. This implies that
$B \leq {c}/{\lfloor \varepsilon K \rfloor}$ for a finite $c$.
Finally, by definition, for $\eps$ small enough, 
$|s_+(\eps)-S_{aA}/f_a|\leq c\eps$ for a finite constant $c$. This yields 
$$  r_K \sum_{k=1}^{\lfloor \eps K \rfloor -1} \frac{\hat{\E}[U_k^K]}{k+1}\geq (1-c\eps) \frac{r_Kf_a \log K }{S_{aA}}$$
for a finite $c$ and concludes the proof for the lower bound. The upper bound is obtained in the same way. This ends the proof of Lemma \ref{uphold}.
\end{proof}

\begin{proof}[Proof of Lemma \ref{lemmajpro}]
We use Coupling \eqref{couplage11} to control the growing of the mutant population during the first period 
of invasion, and the semi-martingale decomposition in Proposition \ref{mart_prop} to bound the fluctuations of $M_A$. 
The hitting time of $\lfloor \eps K \rfloor$ and non-extinction event of $Z^*_\eps$ are denoted by:
\begin{equation*}\label{hittingextetoile}
 T^{*,K}_\eps=\inf \{ t \geq 0, Z^*_\eps(t)= \lfloor \varepsilon K \rfloor \}, \quad \text{and} \quad F^*_\eps=\Big\{   Z^*_\eps(t)\geq 1, \forall t\geq 0  \Big\}, \quad *\in \{-,+\}.\end{equation*}
{Let us introduce the difference of probabilities 
$$B_\eps^K:=\P\Big(\sup_{t \leq T^K_\eps}
\Big|P^K_{A,b_1}(t)-\frac{z_{Ab_1}}{z_A}\Big|>\sqrt{\varepsilon}, T^K_\eps <\infty\Big)-\P\Big(\sup_{t \leq T^K_\eps}
\Big|P^K_{A,b_1}(t)-\frac{z_{Ab_1}}{z_A}\Big|>\sqrt{\varepsilon}, F^-_\eps,  T^K_\eps \leq S^K_\eps  \Big).$$
Then $B_\eps^K$ is nonnegative and we have
\begin{eqnarray*}
 \label{majA} B_\eps^K &\leq &
\P(T^K_\eps <\infty  )-\P(F^-_\eps,  T^K_\eps \leq S^K_\eps  )\\
&=& \P(T^K_\eps <\infty  )-\P( T^K_\eps \leq S^K_\eps  )+\P(T^{(+,K)}_\eps<\infty,  T^K_\eps \leq S^K_\eps  )-\P(F^-_\eps,  T^K_\eps \leq S^K_\eps  ),
\end{eqnarray*}
where the inequality comes from the inclusion $\{F^-_\eps,  T^K_\eps \leq S^K_\eps\} \subset \{T^K_\eps <\infty \}$, as $S_\eps^K$ is almost surely finite.
The equality is a consequence of Coupling \eqref{couplage11} 
which ensures that on the event $\{ T^K_\eps \leq S^K_\eps \}$, $\{T^{(+,K)}_\eps<\infty\}$ holds.
By noticing that 
$$ \{F^-_\eps , T^K_\eps \leq S^K_\eps \} \subset  \{ T^{(-,K)}_\eps <\infty ,T^K_\eps \leq S^K_\eps \} \subset \{ T^{(+,K)}_\eps <\infty ,T^K_\eps \leq
 S^K_\eps \} $$
we get the bound 
\begin{equation}
 \label{majA} B_\eps^K \leq
 \P(T^K_\eps <\infty  )-\P( T^K_\eps \leq S^K_\eps  )+\P(T^{(+,K)}_\eps<\infty  )-\P(F^-_\eps).
\end{equation}
The values of the two first probabilities are approximated in \eqref{taillepopfinale} and \eqref{res_champ},
and \eqref{hitting_times1} implies that $\P(T^{+,K}_\eps<\infty)-\P(F^-_\eps)=s_+(\eps)/(1-(1-s_+(\eps))^{\lfloor \eps K \rfloor})-s_-(\eps)$. Hence 
\begin{equation}\label{restri} \underset{K \to \infty}{\limsup} \ B_\eps^K \leq c\eps, \end{equation} 
where $c$ is finite for $\eps$ small enough, which allows us to focus on the intersection with the event $\{F^-_\eps,T^K_\eps 
\leq S^K_\eps \}$.} We recall that $|N_{Ab_1}N_{ab_2}-N_{Ab_2}N_{ab_1}|\leq N_A N_a$, and that Assumption \ref{condweak} holds. Then \eqref{defM} 
and \eqref{TKTKeps1} imply for $\eps$ small enough 
$$\underset{t \leq T^K_\eps \wedge S_\eps^K }{\sup}\Big|P_{A,b_1}(t)-\frac{z_{Ab_1}}{z_A}-M_A(t)\Big|\leq r_Kf_a  T^K_\eps
\underset{t \leq T^K_\eps \wedge S_\eps^K }{\sup} \left\{ \frac{N_a(t)}{N_A(t)} \right\}\leq 
\frac{r_K f_a \varepsilon  T^{K}_\eps}{\bar{n}_A-2\varepsilon {C_{A,a}}/{C_{A,A}}} \leq \frac{c\eps T^{K}_\eps}{\log K},$$
for a finite $c$. Moreover, $F^-_\eps \cap \{ T^K_\eps \leq S^K_\eps \} \subset F^-_\eps \cap \{ T^K_\eps \leq T_\eps^{(-,K)} \}$. Thus we get 
$$\P\Big(\sup_{t \leq T^K_\eps}\Big|P_{A,b_1}(t)-\frac{z_{Ab_1}}{z_A}-M_A(t)\Big|>\frac{\sqrt{\varepsilon}}{2},F^-_\eps,T^K_\eps \leq S^K_\eps\Big)\leq 
\P \Big( \frac{c \varepsilon  T^{(-,K)}_\eps}{\log K}>\sqrt{\varepsilon}/2,F^-_\eps\Big).$$
Finally, Equation (\ref{equi_hitting}) ensures that $\lim_{K \to \infty}T^{(-,K)}_\eps/\log  K  = s_-(\eps)^{-1}$ a.s.  on the non-extinction event $F^-_\eps$.
 Thus for $\eps$ small enough,
\begin{equation} \label{partie1}\lim_{K \to \infty}\P\Big(\sup_{t \leq T^K_\eps}\Big|P_{A,b_1}(t)-\frac{z_{Ab_1}}{z_A}-M_A(t)\Big|>\frac{\sqrt{\varepsilon}}{2},F^-_\eps,T^K_\eps \leq 
S^K_\eps\Big) = 0 . \end{equation}
To control the term $|M_A|$, we introduce the sequence of real numbers $t_K=(2f_a\log K )/S_{aA}$:
\begin{eqnarray*} \label{decoup_pro} \P\Big(\underset{t \leq T^K_\eps}{\sup}|M_A(t)|>\frac{\sqrt{\varepsilon}}{2},F^-_\eps,T^K_\eps \leq 
S^K_\eps\Big)\leq \P\Big(\underset{t \leq T^K_\eps}{\sup}|M_A(t)|>\frac{\sqrt{\varepsilon}}{2},T^K_\eps \leq S^K_\eps \wedge t_K \Big)+\P(T^K_\eps > t_K,F^-_\eps).\end{eqnarray*}
Equation \eqref{def_s_-s_+1} yields for $\eps$ small enough, $t_K.s_-(\eps)/\log K>3/2$. Thus thanks to (\ref{equi_hitting}) we get,
\begin{equation*} \label{star}\lim_{K \to \infty}  \P(T^{K}_\eps > t_K,F^-_\eps)\leq \lim_{K \to \infty} \P(T^{-,K}_\eps > t_K,F^-_\eps)= 0 .\end{equation*}
Applying Doob's maximal inequality to the submartingale $|M_A|$ and \eqref{crocheten1K1} we get:
\begin{eqnarray*}\label{partie2}
 \P(\underset{t \leq T^K_\eps}{\sup}|M_A(t)|>\sqrt{\varepsilon}/2, T^K_\eps \leq S^K_\eps\wedge t_K ) & \leq  & \P(\underset{t \leq t_K}{\sup}|M_A(t \wedge T^K_\eps 
\wedge S^K_\eps)|>\sqrt{\varepsilon}/2 ) \nonumber\\
& \leq & \frac{4}{\varepsilon}\E\Big[ \langle M_A\rangle_{t_K \wedge T^K_\eps \wedge S^K_\eps} \Big]\\
&\leq  & \frac{4}{\eps}\frac{C(A,2\bar{n}_A) t_K}{(\bar{n}_A-2\eps C_{A,a}/C_{A,A})K},
\end{eqnarray*}
which goes to $0$ at infinity. Adding Equation \eqref{partie1} leads to:
$$ \lim_{K \to \infty}\P\Big(\sup_{t \leq T^K_\eps}\Big|P_{A,b_1}(t)-\frac{z_{Ab_1}}{z_A}\Big|>{\sqrt{\varepsilon}},F^-_\eps,T^K_\eps \leq 
S^K_\eps\Big)=0. $$
Finally, Equation \eqref{restri} complete the proof of Lemma~\ref{lemmajpro}.
\end{proof}

{\bf Acknowledgements:} {\sl The author would like to thank Jean-François Delmas and Sylvie M\'el\'eard for their help and their careful 
reading of this paper. She also wants to thank Sylvain Billiard and Pierre Collet for fruitful discussions during her work and 
several suggestions, so as the two anonymous referees for several corrections and improvements. This work  was partially funded by project MANEGE `Mod\`eles
Al\'eatoires en \'Ecologie, G\'en\'etique et \'Evolution'
09-BLAN-0215 of ANR (French national research agency) and Chair Mod\'elisation Math\'ematique et Biodiversit\'e VEOLIA-Ecole Polytechnique-MNHN-F.X.}

\bibliographystyle{abbrv}

\end{document}